\newtheorem{proposition}{Proposition}
\newtheorem{corollary}{Corollary}
\newtheorem{theorem}{Theorem}
\newtheorem*{propo}{Proposition}
\newtheorem*{theo}{Theorem}
\theoremstyle{definition}
\newtheorem{definition}{Definition}
\newtheorem{example}{Example}
\newtheorem{remark}{Remark}
\newcommand{\blue}{\color{blue}}
\newcommand{\hide}[1]{\ifbool{hidedetails}{}{{\blue #1}}}
\newcommand{\wh}[1]{{\widehat{#1}}}						
\newcommand{\ul}[1]{{\underline{#1}}}						
\newcommand{\wt}[1]{{\widetilde{#1}}}						
\newcommand{\hocolim}{{\rm hocolim}}						
\newcommand{\holim}{{\rm holim}}							
\newcommand{\homology}[1]{{H^{#1}}}						
\newcommand{\pt}{{{\rm p}}}								
\newcommand{\mani}{{\mathcal M}}							
\newcommand{\dersch}{{\mathbb S}}							
\newcommand{\unspa}{{\boldsymbol S}}						
\newcommand{\zeropart}[1]{{\mathcal #1}}					
\newcommand{\unspam}{{\boldsymbol M}}					
\newcommand{\opes}{{U}}									
\newcommand{\derm}{{\mathbb M}}							
\newcommand{\dera}[1]{{{#1}^\bullet}}						
\newcommand{\deri}[3]{{#1^{#3}_{#2}}}						
\newcommand{\spec}[1]{{{\rm Spec}(#1)}}						
\newcommand{\undera}[1]{{{#1}^*}}							
\newcommand{\mapp}{{\rm Map}}							
\newcommand{\cring}[2]{{{\mathcal #1}^{#2}}}					
\newcommand{\cringi}[3]{{{\mathcal #1}^{#3}_{#2}}}				
\newcommand{\cinfty}{{C^\infty}}							
\newcommand{\rahm}[2]{{\Omega^{#2}(#1)}}				
\newcommand{\cotan}[1]{{\mathbb L_{#1}}}					
\newcommand{\tang}[2]{{\mathbb T_{#1/#2}}}					
\newcommand{\tanga}[1]{{\mathbb T_{#1}}}					
\newcommand{\tangde}[2]{{\mathbb T_{#1}^{#2}}}				
\newcommand{\derham}[3]{{\Omega^{#3}({#1}/{#2})}}			
\newcommand{\drham}[3]{{\Omega^{#1}_{#2}(#3)}}				
\newcommand{\anchor}{{\alpha}}							
\newcommand{\anchord}{{\boldsymbol\alpha}}							
\newcommand{\dima}[1]{{\boldsymbol\mu_{#1}}}							
\newcommand{\lalge}{{\mathcal L}}							
\newcommand{\homke}[1]{{{\mathcal K}_{#1}}}					
\newcommand{\dedi}{{\rm d_{dR}}}							
\newcommand{\forms}[1]{{\overline{\boldsymbol\Omega}_{#1}}}			
\newcommand{\quotient}[2]{{#1/#2}}							
\newcommand{\reduced}[1]{{(#1)_{\rm red}}}					
\newcommand{\derhamspace}[1]{{(#1)_{\rm dR}}}				
\newcommand{\nform}[1]{{\omega_{#1}}}						
\newcommand{\symfo}{{\underline{\omega}}}					
\newcommand{\symstra}{{\boldsymbol\omega}}					
\newcommand{\refo}[1]{{{#1}_{\rm re}}}						
\newcommand{\imfo}[1]{{{#1}_{\rm im}}}						
\newcommand{\conju}[1]{{\overline{#1}}}						
\newcommand{\lagra}{{\underline{\lambda}}}					
\newcommand{\lagras}{{\boldsymbol\lambda}}					
\newcommand{\lagfo}[1]{{\lambda_{#1}}}						
\newcommand{\lmin}{{\mathfrak L}}							
\newcommand{\para}{{t}}									
\newcommand{\negacy}[2]{{{\rm NC^{#1}_{#2}}}}				
\newcommand{\symsh}[1]{{{\mathfrak S}_{#1}}}					
\newcommand{\ish}[1]{{{\mathfrak I}_{#1}}}					
\newcommand{\promo}{{P}}								
\newcommand{\dumo}[1]{{(#1)^\vee}}							
\newcommand{\grami}[2]{{\Lambda^{#2,#1}}}					
\newcommand{\gramii}[3]{{\Lambda_{#3}^{#2,#1}}}				
\newcommand{\pullback}[2]{{#1^*(#2)}}						
\newcommand{\conne}[1]{{\gamma_{#1}}}						
\newcommand{\makebo}[3]{{{\mathbf #1_{\mathbf #3}}}}							
\newcommand{\homdi}[1]{{\delta}}							
\newcommand{\cirdi}{{\epsilon}}							
\newcommand{\Sym}[3]{{{\rm Sym}_{#3}^{#1}(#2)}}				
\newcommand{\suspense}[1]{{#1[1]}}						
\newcommand{\suspensi}[2]{{#1[#2]}}						
\renewcommand{\hom}[1]{{{\rm Hom}_{#1}}}					
\newcommand{\tosplit}[1]{{ #1}}								
\newcommand{\lhalf}[1]{{#1'}}								
\newcommand{\rhalf}[1]{{#1''}}								
\newcommand{\nesplit}[1]{{{ #1}'}}							
\newcommand{\negsplit}[2]{{{#1}'_{#2}}}						
\newcommand{\posplit}[1]{{{#1}^+}}							
\newcommand{\negta}[1]{{(#1)^-}}							
\newcommand{\negtat}[2]{{(#1)^-_{#2}}}						
\newcommand{\ubun}{{ F}}								
\newcommand{\rk}[2]{{{\rm rk}_{#2}(#1)}}						
\newcommand{\bundle}[3]{{#1^{#2}_{#3}}}						
\begin{document}

\title[Strictification and gluing of Lagrangian distributions]{Strictification and gluing of Lagrangian distributions on derived schemes with shifted symplectic forms}
\maketitle

\author{Dennis Borisov, Ludmil Katzarkov, Artan Sheshmani, Shing-Tung Yau}
{Dennis Borisov${}^{1}$, Ludmil Katzarkov${}^{4, 6,7}$, Artan Sheshmani${}^{2,3,4}$ and  Shing-Tung Yau$^{2,5}$}

\address{${}^1$  Department of Mathematics and Statistics, University of Windsor, 401 Sunset Ave, Windsor Ontario, Canada}

\address{${}^2$ Harvard University CMSA and Physics Department, Jefferson Laboratory, 17 Oxford St, Cambridge, MA 02138}
\address{${}^3$ Institut for Matematik , Aarhus Universitet, Ny Munkegade 118 Building 1530, DK-8000 Aarhus C, Denmark}
\address{${}^4$ National Research University Higher School of Economics, Russian Federation, Laboratory of Mirror Symmetry, NRU HSE, 6 Usacheva str.,Moscow, Russia, 119048}
\address{${}^5$ Department of Mathematics, Harvard University, Cambridge, MA 02138, USA}
\address{${}^6$ University of Miami, Coral Gables, FL}
\address{${}^7$ Institute of Mathematics and Informatics, Bulgarian Academy of Sciences}


\date{\today}

\begin{abstract} A strictification result is proved for isotropic distributions on derived schemes equipped with negatively shifted homotopically closed $2$-forms. It is shown that any derived scheme over $\mathbb C$ equipped with a $-2$-shifted symplectic structure, and having a Hausdorff space of classical points, admits a globally defined Lagrangian distribution as a dg $\cinfty$-manifold.

\smallskip

\noindent{\bf MSC codes:} 14A20, 14N35, 14J35, 14F05, 55N22, 53D30

\noindent{\bf Keywords:} Shifted symplectic structures, Lagrangian distributions, Calabi--Yau manifolds, $Spin(7)$-instantons, moduli spaces of sheaves

\end{abstract}

\tableofcontents

\section*{Introduction}

\subsection*{Background} Shifted symplectic structures were introduced by Pantev, To\"en, Vaqui\'e and Vezzosi in \cite{PTVV13}, and there it was proved that such structures always appear on moduli spaces of sheaves on Calabi--Yau manifolds. Briefly a shifted symplectic structure is the homotopy algebraic generalization of the usual notion of a symplectic form, where one imposes the two conditions of being de Rham closed and non-degenerate only up to homotopy. We recall some of this theory in Section \ref{SectionSymplecticStructures}.

A major step in understanding local structure of negatively shifted symplectic forms was made by Brav, Bussi and Joyce in \cite{BBJ13}. They have proved a local Darboux theorem, which states that locally any negatively shifted symplectic form can be strictified. This result implies, in particular, that any derived scheme carrying a $-1$-shifted symplectic form can locally be written as a derived critical locus of a function.

\smallskip

Just like the notion of a symplectic form, also Lagrangian distributions have a homotopy algebraic generalization. The property of being isotropic becomes a structure, which allows one to formulate the Lagrangian condition (\cite {PTVV13} \S 2.2). A systematic study of distributions in the context of derived geometry is being done in \cite {AlgF1}, \cite{AlgF2}. In \cite{P14} it was observed that presence of a Lagrangian distribution allows one to write the derived stack as a critical locus of a shifted potential, with the shift being the shift of the symplectic form plus 1. 

A strictified shifted symplectic form, i.e.\@ written in terms of Darboux coordinates, has a natural Lagrangian distribution, thus the local Darboux theorem of \cite{BBJ13} is equivalent to existence of local shifted potentials for all negatively shifted symplectic forms. In fact the main theorem of \cite{BBJ13} is proved by constructing local shifted potentials, and then producing Darboux coordinates.

 If one wants to construct Lagrangian distributions globally, and hence produce globally defined shifted potentials, one needs to develop a more flexible notion of strictification of Lagrangian distributions.
 
 \subsection*{Main results} Our goal is to prove strictification and gluing results for isotropic and Lagrangian distributions on derived schemes equipped with negatively shifted homotopically closed and symplectic forms. To do this we need to restrict to a particular class of derived distributions: firstly we require the distributions to admit local models that are given in terms of sub-complexes of the tangent complexes (this is what we call \emph{derived foliations} Def.\@ \ref{DefDerivedFoliation}), secondly we require that cohomologically the distributions are trivial in non-positive degrees (this is what we call \emph{purely derived} distributions  Def.\@ \ref{DefPurelyDerived}).
 
 Combining Prop.\@ \ref{StriPro} and Prop.\@ \ref{TrivLa} we have our main strictification result.
 
 \begin{propo} Let $\dera{A}$ be a dg algebra and let $\symstra$ be a homotopically closed $2$-form on $\spec{\dera{A}}$. Let $\anchor\colon(\lalge,\lagras)\longrightarrow(\tanga{{\dera{A}}},\symstra)$ be a purely derived foliation with an isotropic structure, s.t.\@ 
 	\begin{equation*}\homology{\geq 2}(\lalge)\overset\cong\longrightarrow\homology{\geq 2}(\tanga{{\dera{A}}}).\end{equation*} 
 	Then locally on $\spec{\dera{A}}$ there is an equivalent distribution with an isotropic structure $\anchor'\colon(\lmin,\lagras')\rightarrow(\tanga{\dera{A}},\symstra)$, s.t.\@ $\lmin$ is a perfect dg $\dera{A}$-module and for every $\pt\colon\dera{A}\rightarrow\mathbb C$ \begin{enumerate}[label=(\roman*)]
 		\item $\lmin^{\leq 0}|_\pt=0$,
 		\item $\lmin^{\geq 2}|_\pt\overset{\cong}\longrightarrow\tangde{{\dera{A}}}{\geq 2}|_\pt$,
 		\item $\lmin^1|_\pt\longrightarrow\tangde{{\dera{A}}}{1}|_{\pt}$ is injective.\end{enumerate}
 If, in addition, $\symstra$ is of degree $-2$ and $\forall\pt\colon\dera{A}\rightarrow\mathbb C$ it defines a perfect pairing between $\homology{0}\left(\tanga{\deri{A}{}{\bullet}}\right)|_\pt$ and $\homology{2}\left(\tanga{\deri{A}{}{\bullet}}\right)|_\pt$, then it can be arranged that $\symfo|_{\lmin}=0$ modulo $\homdi{}(\deri{A}{}{-1})$, where $\symfo$ is the leading term of $\symstra$.
 	\end{propo}
 Notice that this proposition deals with homotopically closed forms, not necessarily symplectic ones. This is because we would like to apply this strictification result to derived stacks equipped with shifted symplectic forms, e.g.\@ quotients of derived schemes by actions of groups. Then the symplectic structure exists only on the quotient stacks, while on the derived schemes one has just homotopically closed forms.
 
 \smallskip
 
 It is important that our strictification result applies to all isotropic distributions satisfying some conditions that are invariant with respect to weak equivalences. In particular, we can transport such distributions from one chart to another and then strictify. This leads to our main gluing result (Thm.\@ \ref{MainResult}).
 
 \begin{theo} Let $(\dersch,\symstra)$ be a derived scheme with a $\mathbb C$-valued $-2$-shifted symplectic structure. Let $(\derm,\imfo{\symstra},\refo{\symstra})$ be the underlying dg manifold with $\mathbb R$-valued symplectic structures. Suppose that the space $\mani$ of classical points is Hausdorff and second countable. Then the sheaf on $\mani$ of purely derived foliations that are Lagrangian distributions with respect to $\imfo{\symstra}$ and are negative definite with respect to $\refo{\symstra}$ is soft. In particular the set of global sections is not empty.\end{theo}

\subsection*{Applications} Although at least some of our statements are proved in greater generality, and many of them are useful for arbitrary negative shifts, it is the particular example of the moduli space of sheaves on a Calabi--Yau $4$-fold, that is the actual object of our study. As usual the motivation comes from the moduli spaces of bundles on Calabi--Yau $4$-folds, and much of what we do is mimicking the gauge-theoretic constructions in derived algebraic geometry, having the Uhlenbeck--Yau theorem in mind \cite{UY86}. The idea is due to Dominic Joyce, and was partially implemented in \cite{BoJ13}.

\smallskip

Recall that Donaldson and Thomas have suggested in \cite{DT} to use a choice of a top holomorphic form on a Calabi--Yau $4$-fold as a sort of orientation to define anti-self dual instantons. Briefly this means discarding some of the conditions on the curvature of a connection, so as to obtain a determined elliptic system. In \cite{Conan} this was shown by Leung to be part of a pattern of (special) connections, depending on the metric division algebra chosen. For Calabi--Yau $4$-folds all $4$ choices are possible, and the suggestion of \cite{DT} corresponds to the interaction between complex numbers and octonions. The case of Calabi--Yau $4$-folds is exceptional for being the only class of manifolds with octonion metric structure. 

Even more exceptional is the fact that on the set-theoretical level the moduli spaces of special $SU(4)$-connections (geometry over the complex numbers) and $Spin(7)$-connections (geometry over the octonions) are isomorphic. In terms of derived geometry, the latter moduli space, coming from a determined elliptic system, admits a virtual fundamental class. On the other hand, set-theoretic equality between the two moduli spaces allows us to view the first moduli space as a derived critical locus of a function of degree $-1$ on the second moduli space. 

\smallskip

Dominic Joyce had come up with the idea of replicating this remarkable correspondence in the setting of algebraic geometry, rather than gauge theory, and in the process obtaining natural compactifications of the moduli spaces. The Uhlenbeck--Yau theorem allows us to reformulate $SU(4)$-connections algebraic-geometrically, but there is no such luck for $Spin(7)$-instantons. The idea of Dominic was to arrive at the $Spin(7)$-moduli space on the algebraic geometric side by using the same pairing given by the top holomorphic form. This is how the $-2$-shifted symplectic structure becomes central.

In \cite{BoJ13} the question of gluing such local constructions was addressed. The objective was to arrive at the virtual fundamental class of $Spin(7)$-instantons in the algebraic-geometric setting. This was done by gluing local constructions up to cohomology. Different choices of isotropic/negative definite sub-bundles and the cohomological freedom of gluing local charts should not alter the resulting virtual fundamental class, i.e.\@ the cobordism class of the quotient derived manifold.

\smallskip

Here we have a slightly different point of view. Instead of striving for a virtual fundamental class, we are more interested in the correspondence between the moduli space of perfect sheaves (i.e.\@ $SU(4)$-connections in gauge-theoretic terms) and the moduli space of the algebraic-geometric version of $Spin(7)$-instantons on the same Calabi--Yau manifold. In terms of derived algebraic geometry this means a correspondence between the total space and the base space of a Lagrangian fibration. Here an observation by Tony Pantev, made in \cite{P14}, becomes important.

He observed that, given a Lagrangian distribution on a derived scheme or a derived stack equipped with a shifted symplectic structure, one can divide by this distribution obtaining another scheme/stack {together with a globally defined graded potential}. The critical locus of this potential is the original scheme/stack. If the distribution happens to be the tangent distribution to a morphism, and, moreover, the domain and codomain of this morphism have isomorphic reduced schemes, then the codomain will be the quotient of the domain by the distribution.

On the gauge-theoretic side equality of the reduced schemes is precisely the equality of the sets of $SU(4)$-connections and $Spin(7)$-connections. The same remains true also for the larger moduli spaces on the side of derived algebraic geometry. Finally the local constructions in \cite{BoJ13} exhibit the Lagrangian distributions as tangent distributions. Therefore our objective is to show that such Lagrangian distributions exist globally on the moduli stack of perfect complexes on any Calabi--Yau $4$-fold.

\smallskip

Here we need to take into account the stacky nature of our moduli spaces. It is not clear to us how to proceed in the abstract general case of an arbitrary Artin stack with a $-2$-shifted symplectic structure on it. The special case of the moduli spaces of perfect complexes on Calabi--Yau $4$-folds has important simplifying properties: we can choose our moduli stack to be the quotient of a derived $Quot$-scheme. Locally this quotient map is a $PGL(n)$-principal bundle. Thus one can work on the derived $Quot$-scheme itself, using the group action to distribute the construction along the orbits. This is done in \cite{BSY2}.

\subsection*{Previous results} Similarly to \cite{BoJ13} our approach is by gluing local constructions. However, both the local objects and the gluing method are different. Locally the object of our interest is a Lagrangian distribution, which consists of an integrable distribution and an isotropic structure, that satisfies the homological analog of the maximality condition. In \cite{BoJ13} only the distributions were constructed, the isotropic structures were always assumed to be trivial. This difference becomes important when we try to extend a Lagrangian distribution from one local chart to another.

In general a shifted symplectic structure is given on each local chart individually, together with gluing up to homotopy on the intersections. Therefore it might be difficult to find two distributions with trivial isotropic structures on two charts, that agree on the intersection. Allowing non-trivial isotropic structures and, moreover, keeping track of them is essential for the gluing.

As in \cite{BoJ13} the gluing is made possible by using partition of unity. As in the loc.\@ cit.\@ the key to this is the requirement that the distribution is not only isotropic with respect to the imaginary part of the symplectic form, but also negative definite with respect to the real part. This approach works for \emph{strict} Lagrangian distributions, i.e.\@ distributions where the imaginary part of the symplectic form vanishes on the nose. This requirement of strictness was the main source of technical difficulties in \cite{BoJ13}, that eventually led to gluing up to cohomology.

In this paper we do not always require our Lagrangian distributions to be strict, imposing only cohomological conditions instead. This immediately implies that a pull-back of such a distribution from one chart to another still has this property. To perform the gluing argument we need to prove a strictification result, and here the isotropic structure becomes essential. We prove that, using the isotropic structure, every Lagrangian distribution that we consider, can be equivalently rewritten as a strict Lagrangian distribution. 

This utilization of isotropic structures and the accompanying strictification result are the main difference between our approach and that of \cite{BoJ13}. Together they allow us to prove existence of a genuine globally defined Lagrangian distribution. Another minor difference in our approaches is in using the sheaf of all possible Lagrangian distributions instead of a single atlas. Explicitly, on the topological space of classical points in the moduli space we define the sheaf of Lagrangian distributions relative a given symplectic form, and then translate existence of partition of unity into softness of this sheaf, which, given the local constructions, immediately implies existence of global sections.

\subsection*{Contents of the paper} In Section \ref{IntegrableDistributions} we recall the infinitesimal calculus in the context of derived geometry and the theory of integrable distributions in terms of graded mixed algebras, as described in \cite{P14}. Almost everything in this section is well known, the only exception being the notion of a derived foliation (Def.\@ \ref{DefDerivedFoliation}). Roughly a derived foliation is a distribution, s.t.\@ locally it always has a presentation as a Lie sub-algebra of the tangent complex. This requirement implies a non-trivial cohomological condition, that considerably simplifies our constructions in the rest of the paper.

In Section \ref{SectionSymplecticStructures} we take care of symplectic forms, isotropic distributions and Lagrangian distributions. We recall the necessary definitions and results from \cite{PTVV13}, \cite{P14} and describe how each one of the three kinds of structure can be organized into a sheaf on the topological space of classical points. We show that the sets of global sections of these sheaves agree with the usual definitions of globally defined structures.

The main constructions of the paper begin in Section \ref{SectionStrict}. There we introduce the notion of a purely derived foliation, which is a derived foliation that has vanishing cohomology in non-positive degrees. For distribution of this kind we prove the semi-strictification result: there is an equivalent rewriting where the symplectic form vanishes on the nose, but only modulo the ideal of the classical scheme. It is here that the isotropic structure is used extensively. In addition we show that for purely derived foliations, if at least one isotropic structure is Lagrangian, so are all of them, i.e.\@ it is a property of the distribution itself.

Almost everything in sections \ref{IntegrableDistributions}, \ref{SectionSymplecticStructures}, \ref{SectionStrict} applies equally well to dg algebras over $\mathbb C$ and to dg $\cinfty$-rings. Section \ref{SectionGlobalLagrangian} deals only with dg $\cinfty$-rings. Here we finish proving the strictification result, i.e.\@ that every Lagrangian distribution that we consider can be made strict, and use this to show softness of the sheaf of such distributions on the topological space of classical points.

\subsection*{Notation} By \emph{a classical point} in a derived scheme we mean a morphism from $\spec{\mathbb C}$ to this scheme. Similarly for derived manifolds over $\mathbb R$. Given a dg ring $\dera{A}$ we write \emph{perfect dg $\dera{A}$-module} to mean a retract of a finitely generated almost free dg $\dera{A}$-module. For such dg module we can choose a finite sequence of finitely generated projective $\deri{A}{}{0}$-submodules, that generate it over $\dera{A}$. Sometimes we will call these $\deri{A}{}{0}$-submodules \emph{generating bundles}.

\medskip

{\bf Acknowledgements:} \includegraphics[scale=0.2]{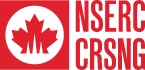} The first author acknowledges the support of the Natural Sciences and Engineering Research Council of Canada (NSERC), [RGPIN-2020-04845].

Cette recherche a été financée par le Conseil de recherches en sciences naturelles et en génie du Canada (CRSNG), [RGPIN-2020-04845].

\smallskip

The first author would like to thank Dominic Joyce, Tony Pantev and Dingxin Zhang for very helpful conversations. The second author was partially supported by NSF Grant, Simons Investigatior  Award HMS, Simons Collaboration Award HMS, National Science Fund of Bulgaria, National Scientific Program “Excellent Research and People for the Development of European
Science” (VIHREN), Project No. KP-06-DV-7, HSE University Basic Research Program.The third author would like to thank Dennis Gaitsgory, Amin Gholampour, Martijn Kool, Naichung Conan Leung, and Tony Pantev for many helpful discussions and commenting on the first versions of this article. Research of A.S. and D. B. was partially supported by generous Aarhus startup research grant of A. S., and partially by the NSF DMS-1607871, NSF DMS-1306313, the Simons 38558, and Laboratory of Mirror Symmetry NRU HSE, RF Government grant, ag. No 14.641.31.0001. A.S. would like to further sincerely thank the Center for Mathematical Sciences and Applications at Harvard University, the center for Quantum Geometry of Moduli Spaces at Aarhus University, and the Laboratory of Mirror Symmetry in Higher School of Economics, Russian federation, for the great help and support. S.-T. Y. was partially supported by NSF DMS-0804454,
NSF PHY-1306313, and Simons 38558. 

\section{Integrable distributions}\label{IntegrableDistributions}

In this section we recall the notion of integrable distributions on derived schemes and the process of dividing by such distributions. We follow \cite{P14} and use the language of graded mixed algebras. We will be especially interested in the cases when the result of such division is again a derived scheme. We begin by recalling the well known facts concerning cotangent complexes and complexes of K\"ahler differentials.

\subsection{Well presented dg algebras and $\cinfty$-rings}

Much of what we will be doing will involve de Rham complexes corresponding to non-positively graded differential algebras over $\mathbb C$. It is well known of course that, given such an algebra $\dera{A}=(\undera{A},\homdi{})$, one should derive the functor of K\"ahler differentials to obtain \emph{the cotangent complex} $\cotan{{\dera{A}}}$. 

For practical reasons one would like to know when can one use the na\"ive K\"ahler differentials and still obtain the correct derived version. The following statement, which the authors have learned from Chris Brav, provides a useful class of dg algebras having this property.

\begin{proposition}\label{ComputingCotangent} Let $\dera{A}=(\undera{A},\homdi{})$ be a differential non-positively graded $\mathbb C$-algebra, s.t.\@ $A^0$ is of finite type and smooth over $\mathbb C$ and $\undera{A}$ is freely generated as an $A^0$-algebra by a sequence $\{\promo^k\}_{k<0}$ of finitely generated projective $A^0$-modules, where $\forall k<0$ $\promo^k\subseteq A^k$. Then the dg $\dera{A}$-module $\drham{}{}{\dera{A}}$ of K\"ahler differentials has the homotopy type of $\cotan{{\dera{A}}}$.\end{proposition}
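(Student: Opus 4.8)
The plan is to split the structure morphism $\mathbb C\to\dera{A}$ as $\mathbb C\to A^0\to\dera{A}$ and to compare, term by term, the transitivity triangle for cotangent complexes with the first fundamental exact sequence for na\"ive K\"ahler differentials. For this composite one has the distinguished triangle
\[
\dera{A}\otimes_{A^0}\cota{A^0}{\mathbb C}\longrightarrow\cotan{\dera{A}}\longrightarrow\cota{\dera{A}}{A^0}\xrightarrow{[1]}
\]
in the derived category of dg $\dera{A}$-modules, together with the canonical comparison maps from each cotangent complex to its na\"ive module of K\"ahler differentials. I would show that the two outer comparison maps are equivalences and that the na\"ive fundamental sequence is a homotopy cofiber sequence related to this triangle by the comparison maps; the two-out-of-three property in the resulting map of cofiber sequences then forces the middle comparison map $\cotan{\dera{A}}\to\drham{}{}{\dera{A}}$ to be an equivalence, which is the claim.

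First I would treat the base. Since $A^0$ is of finite type and smooth over $\mathbb C$, its cotangent complex is concentrated in degree $0$ and is the finitely generated projective $A^0$-module $\Omega^1_{A^0/\mathbb C}$ of K\"ahler differentials, so the comparison map $\cota{A^0}{\mathbb C}\to\Omega^1_{A^0/\mathbb C}$ is an equivalence. Projectivity makes this module flat over $A^0$, so the derived base change along $A^0\to\dera{A}$ agrees with the ordinary one, and the left-hand term of the triangle is $\dera{A}\otimes_{A^0}\Omega^1_{A^0/\mathbb C}$, which is exactly the left-hand term of the na\"ive sequence.

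The crucial step is the relative term, and this is where the hypothesis of free generation is used. I claim that $A^0\to\dera{A}$ is a cofibration of commutative dg algebras. Because $\undera{A}$ is free as a graded $A^0$-algebra on $\bigoplus_{k<0}\promo^k$ and each $\promo^k$ sits in strictly negative degree, the differential of degree $+1$ satisfies $\homdi{}(\promo^k)\subseteq A^{k+1}$, i.e.\@ it lands in the subalgebra generated by the generators of degree strictly larger than $k$. Filtering the generators by degree therefore exhibits $\dera{A}$ as a cellular extension of $A^0$: at each stage one attaches a single projective module $\promo^k$ of generators whose differential already lies in the previous stage, and attaching a projective module is a retract of attaching a free module, hence a cofibration. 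Consequently $\cota{\dera{A}}{A^0}\simeq\drham{}{A^0}{\dera{A}}$, the free dg $\dera{A}$-module $\bigoplus_{k<0}\dera{A}\otimes_{A^0}\promo^k$ with its induced differential, and the comparison map on the right-hand terms is an equivalence.

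Finally, freeness of $\undera{A}$ over $A^0$ splits the na\"ive differentials, as graded $\dera{A}$-modules, into the part pulled back from the base and the part spanned by the differentials of the new generators, giving the split short exact sequence
\[
0\longrightarrow\dera{A}\otimes_{A^0}\Omega^1_{A^0/\mathbb C}\longrightarrow\drham{}{}{\dera{A}}\longrightarrow\drham{}{A^0}{\dera{A}}\longrightarrow 0 .
\]
Being split on underlying graded modules it is a homotopy cofiber sequence, and naturality of the fundamental exact sequence makes the comparison maps into a map of cofiber sequences from the transitivity triangle; since the outer maps are equivalences by the previous two steps, so is the middle one. The main obstacle I anticipate is precisely the relative step: one must check carefully that the free-on-projective extension with its internal differential is genuinely cofibrant (the degreewise filtration and the reduction from projective to free generators), and that the na\"ive fundamental sequence is compatible with the connecting map of the derived transitivity triangle; everything else is either standard (smoothness of $A^0$) or formal (the two-out-of-three argument).
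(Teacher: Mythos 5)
Your proof is correct, and it reaches the conclusion by a genuinely different mechanism than the paper's, even though both share the same skeleton: the factorization $\mathbb C\to A^0\to\dera{A}$, smoothness of $A^0$, degreewise splitness of the na\"ive fundamental sequence, and a final two-out-of-three step. The paper never isolates your key lemma --- that $A^0\to\dera{A}$ is a cofibration of dg algebras. Instead it chooses an almost free resolution $\dera{\wt{A}}\twoheadrightarrow\dera{A}$ over $\mathbb C$, so that $\drham{}{}{\dera{\wt{A}}}\otimes_{\dera{\wt{A}}}\dera{A}$ computes $\cotan{\dera{A}}$ on the nose, and compares the two na\"ive fundamental sequences by explicit surgery: smoothness of $A^0$ gives a section of the surjection from the degree $-1$ part of the kernel ideal onto the conormal module $\mathfrak I^0/(\mathfrak I^0)^2$, copies of this module and its suspension are split off all three terms of the resolved row, and dividing by the resulting (acyclic, in the middle term) submodules makes every vertical arrow a quasi-isomorphism. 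Your route replaces that conormal bookkeeping by two standard facts: the identification $\cota{\dera{A}}{A^0}\simeq\drham{}{A^0}{\dera{A}}$ for a cofibrant extension --- and your cofibrancy argument is sound, since the differential strictly raises degree so the filtration by degree of generators is cellular, and attaching a projective $\promo^k$ is a retract of attaching a free module (put zero differential on the generators of a projective complement) --- together with naturality of the transitivity triangle under the comparison maps from cotangent complexes to K\"ahler differentials. What you gain is modularity and a clean localization of where the content lies (cofibrancy of the extension, classical smoothness of the base); what you pay is reliance on model-categorical infrastructure, in particular the compatibility you rightly flag, which can be settled by realizing the transitivity triangle as the na\"ive sequence of a cofibration $R\to D$ covering $A^0\to\dera{A}$ with $R\overset{\sim}\longrightarrow A^0$ a cofibrant replacement over $\mathbb C$, and then mapping it functorially to the na\"ive sequence of $A^0\to\dera{A}$; both rows are degreewise split short exact, so this is an honest morphism of distinguished triangles. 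The paper's argument is more self-contained --- it needs only the existence of almost free resolutions and hands-on homological algebra --- and its ``divide by acyclic submodules'' step proves, inside one chosen resolution, exactly the statement you extract abstractly from cofibrancy.
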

\begin{proof} Let $\widetilde{A}^\bullet\twoheadrightarrow A^\bullet$ be an almost free resolution of $A^\bullet$, i.e. $\widetilde{A}^*$ is a free graded $\mathbb C$-algebra with finitely many generators in each degree. We have a morphism of exact sequences of dg $A^0$-modules:
	\begin{equation}\label{WellComparison}\xymatrix{\drham{}{}{\deri{\wt{A}}{}{0}}\underset{\deri{\wt{A}}{}{0}}\otimes\dera{A}\ar[r]\ar[d] & 
	\drham{}{}{\dera{\wt{A}}}\underset{\dera{\wt{A}}}\otimes\dera{A}\ar[r]\ar[d] & \drham{}{\deri{\wt{A}}{}{0}}{\dera{\wt{A}}}\underset{\dera{\wt{A}}}\otimes\dera{A}\ar[d]\ar[r] & 0\ar[d]\\
	\drham{}{}{\deri{A}{}{0}}\underset{\deri{A}{}{0}}\otimes\dera{A}\ar[r] & \drham{}{}{\dera{A}}\ar[r] & \drham{}{\deri{A}{}{0}}{\dera{A}}\ar[r] & 0.}\end{equation}
Let $\mathfrak I^\bullet$ be the kernel of $\dera{\wt{A}}\twoheadrightarrow\dera{A}$, then the kernel of the left vertical arrow in (\ref{WellComparison}) is $\mathfrak I^0/(\mathfrak I^0)^2\underset{\deri{\wt{A}}{}{0}}\otimes\dera{A}$. Since $\deri{A}{}{0}$ is smooth over $\mathbb C$, $\mathfrak I^0/(\mathfrak I^0)^2$ is a projective $\deri{A}{}{0}$-module, and there is a section $\sigma$ of $\mathfrak I^1/\mathfrak I^0\mathfrak I^1\overset{\delta}\twoheadrightarrow\mathfrak I^0/(\mathfrak I^0)^2$. On the other hand, applying the K\"ahler differential, we have 
	\begin{equation*}\mathfrak I^1/\mathfrak I^0\mathfrak I^1\longrightarrow\drham{}{}{\dera{\wt{A}}}\underset{\dera{\wt{A}}}\otimes\dera{A},\quad
	\mathfrak I^1/\mathfrak I^0\mathfrak I^1\longrightarrow\drham{}{\deri{\wt{A}}{}{0}}{\dera{\wt{A}}}\underset{\dera{\wt{A}}}\otimes\dera{A}.\end{equation*} 
We compose these maps with $\sigma$ and obtain inclusions of the suspension of $\mathfrak I^0/(\mathfrak I^0)^2\underset{\deri{\wt{A}}{}{0}}\otimes\dera{A}$ into the middle and right non-trivial terms of the first row of (\ref{WellComparison}). Now we have $\mathfrak I^0/(\mathfrak I^0)^2\underset{\deri{\wt{A}}{}{0}}\otimes\dera{A}$ (or its suspension) sitting inside all three non-trivial terms of the first row of (\ref{WellComparison}). Dividing by the corresponding dg submodules, we still get an exact sequence (by construction), and the homotopy type of the middle term does not change (there we divide by an acyclic submodule). Since $A^*$ is freely generated over $A^0$ by projective modules, we notice that after this division the right term of the first row in (\ref{WellComparison}) becomes quasi-isomorphic to the right term of the second row. For the left term this is true by construction. Thus we conclude that the second from the left vertical arrow in (\ref{WellComparison}) is a quasi-isomorphism.\end{proof}%

\medskip

Proposition \ref{ComputingCotangent} justifies the following Definition, which is a slight generalization of the notion of a standard dg algebra in \cite{BBJ13}.

\begin{definition} We will say that a differential non-positively graded $\mathbb C$-algebra $\dera{A}$ is {\it well presented}, if it satisfies the conditions of Proposition \ref{ComputingCotangent} with the sequence $\{\promo^k\}_{k<0}$ being finite.\end{definition}
\begin{remark} The property of being well presented is not invariant with respect to weak equivalences of dg algebras. This notion is introduced to single out useful representatives of a given weak equivalence class. In particular, if $\dera{A}$ is well presented, $\drham{}{}{\dera{A}}$ is a perfect dg $\dera{A}$-module, with the underlying graded module freely generated by $\drham{}{}{\deri{A}{}{0}}$ and $\{\promo^k\}_{k<0}$.

On the other hand, the requirement that the sequence $\{\promo^k\}_{k<0}$ is finite implies that $\dera{A}$ is \emph{homotopically of finite presentation} (e.g.\@ \cite{To10} \S4.3). This property is invariant with respect to weak equivalences. From now on we will always assume that our dg algebras are well presented.\end{remark}

There is another property of dg $\mathbb C$-algebras that is not invariant with respect to weak equivalences, but is very helpful nonetheless. The authors have learned to appreciate its usefulness from Dominic Joyce.

\begin{definition}\label{DefMinimalAlgebra} Let $\dera{A}=(\undera{A},\homdi{})$ be a differential non-positively graded $\mathbb C$-algebra, and let $\pt\colon\dera{A}\rightarrow\mathbb C$ be \emph{a classical point}. We will say that \emph{$\dera{A}$ is minimal at $\pt$}, if $\homdi{}=0$ on $\drham{}{}{\dera{A}}|_\pt$, i.e.\@ modulo the kernel of $\pt$.\end{definition}

The following statement is known and obvious.

\begin{proposition} Let $\dera{A}$ be a well presented dg $\mathbb C$-algebra, and let $\pt\colon\dera{A}\rightarrow\mathbb C$. There are $f\in\deri{A}{}{0}$ and a surjective weak equivalence $\pi\colon\dera{A}[f^{-1}]\rightarrow\deri{A}{1}{\bullet}$ of well presented dg $\mathbb C$-algebras, s.t.\@ $\pt$ factors through $\pi$, and $\deri{A}{1}\bullet$ is minimal at $\pt$.\end{proposition}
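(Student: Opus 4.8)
The plan is to achieve minimality by repeatedly cancelling \emph{contractible pairs} of generators, each cancellation costing one localization of $\deri{A}{}{0}$ at a function that does not vanish at $\pt$. Since $\dera{A}$ is well presented, Proposition \ref{ComputingCotangent} identifies $\drham{}{}{\dera{A}}$ with $\cotan{\dera{A}}$, so its restriction $\drham{}{}{\dera{A}}|_\pt:=\drham{}{}{\dera{A}}\otimes_{\dera{A}}\mathbb{C}$ (along $\pt$) is a finite complex of finite-dimensional $\mathbb{C}$-vector spaces, with underlying graded space $\drham{}{}{\deri{A}{}{0}}|_\pt$ in degree $0$ together with $\promo^k|_\pt$ in each degree $k<0$. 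Because every negative-degree generator dies under the classical point, the induced differential on this complex is exactly the ``linear part at $\pt$'' of $\homdi{}$, and $\dera{A}$ is minimal at $\pt$ precisely when this differential vanishes. I would therefore induct, strictly reducing the total rank $\dim\drham{}{}{\deri{A}{}{0}}|_\pt+\sum_{k<0}\rank(\promo^k)$ at each step, until the induced differential is zero.

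First I would treat the generic move, where a nonzero component lands in a negative degree, say $\promo^k|_\pt\to\promo^{k+1}|_\pt$ with $k\le -2$. I pick a local generator $x$ of $\promo^k$ and a local generator $y$ of $\promo^{k+1}$ such that the coefficient $u\in\deri{A}{}{0}$ of $y$ in the linear part of $\homdi{}x$ satisfies $\pt(u)\neq 0$. Setting $f:=u$ and passing to $\dera{A}[f^{-1}]$, the element $u$ becomes a unit and $\pt$ factors through the localization since $\pt(f)\neq 0$; a change of the free generating set replacing $y$ by $\homdi{}x$ (legitimate as $u$ is now invertible, and splitting a rank-one free summand off the projective module $\promo^{k+1}$ with projective complement) makes $\homdi{}x=y$ on the nose. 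Straightening the remaining generators degree by degree to remove $y$ from their differentials (using $\homdi{}y=0$), the algebra splits as a tensor product of a smaller well presented dg algebra with the contractible pair $(\deri{A}{}{0}\langle x,y\rangle,\ \homdi{}x=y)$; the projection killing $x,y$ is a surjective weak equivalence through which $\pt$ descends, as $\pt(x)=\pt(y)=0$.

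The remaining possibility is a nonzero component $\promo^{-1}|_\pt\to\drham{}{}{\deri{A}{}{0}}|_\pt$, whose target lives in $\deri{A}{}{0}$; this move modifies $A^0$ and is the main obstacle. Here I would choose $x_1,\dots,x_r\in\promo^{-1}$ whose images $g_i:=\homdi{}x_i\in\deri{A}{}{0}$ have $\mathbb{C}$-linearly independent de Rham differentials $\dedi g_i|_\pt$ (note $\pt(g_i)=\pt(\homdi{}x_i)=0$). Since $\deri{A}{}{0}$ is smooth over $\mathbb{C}$ and the $\dedi g_i|_\pt$ are independent, the $g_i$ form part of a regular system of parameters at $\pt$; hence after inverting a suitable $f$ with $\pt(f)\neq 0$ they become a regular sequence and cut out a smooth, finite-type $\deri{A}{1}{0}:=\deri{A}{}{0}[f^{-1}]/(g_1,\dots,g_r)$. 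The Koszul complex on $(g_1,\dots,g_r)$ then resolves $\deri{A}{1}{0}$, matching the contractible pairs $(x_i,\ \homdi{}x_i=g_i)$, so dropping the $x_i$ and quotienting $A^0$ is again a surjective weak equivalence of well presented dg algebras through which $\pt$ factors. Verifying smoothness of $\deri{A}{1}{0}$ and the genuine (not merely cohomological) regularity of $(g_i)$ after localization is exactly where the smoothness hypothesis on $A^0$ and the choice of $f$ are essential.

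Finally I would assemble the induction: each move strictly lowers the total rank, so the process terminates after finitely many steps, at which point the induced differential on $\drham{}{}{\dera{A}}|_\pt$ vanishes and the resulting algebra $\deri{A}{1}{\bullet}$ is minimal at $\pt$. Taking $f$ to be the product of the finitely many localizing functions used, and $\pi$ the composite of the cancellation projections, yields a single surjective weak equivalence $\pi\colon\dera{A}[f^{-1}]\to\deri{A}{1}{\bullet}$ of well presented dg algebras; minimality at $\pt$, homotopy finite presentation, and the factorization of $\pt$ through $\pi$ all persist by construction. The one point demanding care throughout is preservation of well-presentedness — keeping $A^0$ smooth of finite type and the generating bundles finitely generated projective — which is precisely what the localizations are arranged to guarantee.
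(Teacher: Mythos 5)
Your argument is correct and follows essentially the same route as the paper: the paper likewise kills a complement of $\ker\homdi{}$ in the cotangent fibre at $\pt$, only it does so in a single step, choosing finitely many negative-degree elements whose classes modulo $(\ker\pt)^2$ span such a complement, localizing so that the dg ideal $\mathfrak A$ they generate becomes acyclic, and setting $\deri{A}{1}{\bullet}:=\dera{A}[f^{-1}]/\mathfrak A$. Your pair-by-pair cancellation, including the regular-sequence/Koszul treatment of the $\promo^{-1}\to\drham{}{}{\deri{A}{}{0}}$ components, is precisely the granular justification of why that quotient is a surjective weak equivalence onto a well presented algebra.
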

\hide{\begin{proof} We can choose a finite sequence of elements of $\dera{A}$ in negative degrees, s.t.\@ modulo square of the kernel of $\pt$ these elements provide a basis for a complement to the kernel of $\homdi{}$. Let $\mathfrak A\subseteq\dera{A}$ be the dg ideal generated by these elements. As there are only finitely many of them, localizing around $\pt$, if necessary, we can assume that $\mathfrak A$ is acyclic. Then $\deri{A}{1}\bullet:=\dera{A}/\mathfrak A$.\end{proof}}

\medskip

In addition to differential non-positively graded $\mathbb C$-algebras we will need to work with \emph{differential non-positively graded $\cinfty$-rings} (e.g.\@ \cite{Dima}). By definition these are non-positively graded differential $\mathbb R$-algebras, that have, in addition, a $\cinfty$-structure on the degree $0$-component, extending the commutative $\mathbb R$-algebra structure. 

Notice that given two dg $\cinfty$-rings, that are Hausdorff with respect to the natural Fr\'echet topology, the set of dg $\cinfty$-morphisms between them equals the set of all morphisms between the underlying dg commutative algebras (e.g.\@ \cite{TopCha}). Correspondingly almost everything we do in this section extends verbatim to such dg $\cinfty$-rings, e.g.\@ a dg $\cinfty$-ring $\cring{A}{\bullet}$ is \emph{well presented}, if $\cring{A}0$ is the ring of $\cinfty$-functions on a smooth manifold, and the underlying graded $\cring{A}0$-algebra of $\cring{A}\bullet$ is freely generated by finitely many finitely generated projective $\cring{A}0$-modules. To distinguish dg commutative algebras from dg $\cinfty$-rings, we will write the latter in italics.

One difference between commutative algebras and $\cinfty$-rings, that we would like to mention, is the construction of K\"ahler differentials. Of course it is wrong to apply the usual definition to the case of $\cinfty$-rings. One has to take closures of the ideals and sub-modules with respect to Fr\'echet topology. This is what we will always do, without mentioning it again.

\subsection{Graded mixed algebras and integrable distributions}\label{SectionDistributions}

Let $\dera{A}$ be a differential non-positively graded $\mathbb C$-algebra. The discussion in this section applies equally well to dg $\cinfty$-rings, in which case one should replace $\mathbb C$ with $\mathbb R$ throughout. Since construction of K\"ahler differentials is functorial, we have a differential non-positively graded $\dera{A}$-module $\drham{}{}{\dera{A}}$. Taking (graded) anti-symmetric powers of $\drham{}{}{\dera{A}}$ and equipping it with the de Rham differential we obtain $\drham{\bullet}{}{\dera{A}}$, which is a graded mixed complex. Recall (e.g.\@ \cite{PTVV13} \S 1.1) that {\it a graded mixed complex over $\mathbb C$} is a triple $(\grami{\bullet}{\bullet},\homdi{},\cirdi)$, where $\grami{\bullet}{\bullet}$ is a $\mathbb Z_{\geq 0}\times\mathbb Z$-graded $\mathbb C$-space, and $\homdi{},\cirdi$ are $\mathbb C$-linear differentials 
	\begin{equation*}\homdi{}\colon\grami{\bullet}{\bullet}\longrightarrow\grami{\bullet+1}{\bullet+0},\quad
	\cirdi\colon\grami\bullet\bullet\longrightarrow\grami{\bullet-1}{\bullet+1},\end{equation*}
satisfying $\homdi{}^2=\cirdi^2=\homdi{}\circ\cirdi+\cirdi\circ\homdi{}=0$. Following loc.\@ cit.\@ we say that a component $\grami{m}{n}$ has {\it degree} $m$ and {\it weight} $n$. The de Rham complex
	\begin{equation*}\drham{\bullet}{}{\dera{A}}:=\underset{n\geq 0}\bigoplus\,\Sym{n}{\suspense{\drham{}{}{\dera{A}}}}{\dera{A}},\end{equation*}
provides an example. Here $\Sym{n}{-}{\dera{A}}$ stands for the $n$-th symmetric tensor power over $\dera{A}$, and $\suspense{(-)}$ is the suspension functor, i.e.\@ it lowers the cohomological degree by $1$.\footnote{We will use $\drham{}{}{\dera{A}}$ to denote the actual complex of K\"ahler differentials, and $\drham{1}{}{\dera{A}}$ for the shifted version.} Degrees of elements of ${\drham{\bullet}{}{\dera{A}}}$ are just their cohomological degrees, while the weight of an $n$-form is $n$. The cohomological differential $\homdi{}$ is obvious, and $\cirdi$ is the de Rham differential. 

\smallskip

It is important to notice that de Rham complexes have the structure of {\it graded mixed algebras} over $\mathbb C$ (\cite{TV11}, \cite{P14}), i.e.\@ the usual wedge product of differential forms makes $\drham{\bullet}{}{\dera{A}}$ into an associative, graded commutative $\mathbb C$-algebra with respect to both the degree and weight gradings, and the two differentials satisfy the (graded) Leibniz condition.\footnote{The wedge product of differential forms is of course $\dera{A}$-linear, not only $\mathbb C$-linear. This is due to the fact that $\dera{A}$ is the weight $0$ component of $\drham{\bullet}{}{\dera{A}}$.} This is not just any dg mixed algebra: as a dg $\dera{A}$-algebra it is freely generated over $\dera{A}$ by its weight $1$ component. This fact is important for defining integrable distributions.

\begin{definition}\label{DefFol} (E.g.\@ \cite{P14}) {\it An integrable distribution on $\spec{\dera{A}}$} is given by a pair $(\grami\bullet\bullet,\anchord)$, where $\grami\bullet\bullet$ is a graded mixed algebra over $\mathbb C$, and 
	\begin{equation*}\anchord\colon\drham{\bullet}{}{\dera{A}}\longrightarrow\grami\bullet\bullet\end{equation*} 
is a morphism of graded mixed algebras, s.t.\@\begin{enumerate}
\item $\anchord\colon\dera{A}=\drham{0}{}{\dera{A}}\overset\cong\longrightarrow\grami{\bullet}{0}$,
\item $\grami\bullet{1}$ is a perfect dg $\dera{A}$-module,
\item as a dg algebra (i.e.\@ forgetting $\cirdi$) $\grami\bullet\bullet\cong\underset{n\geq 0}\bigoplus\,\Sym{n}{\grami\bullet{1}}{\grami\bullet{0}}$.\end{enumerate}
A \emph{morphism} between integrable distributions is a morphism of graded mixed algebras under $\drham{\bullet}{}{\dera{A}}$.\end{definition}
\begin{remark} Our definition of integrable distributions has redundant parts and at the same time is not the most general one. The redundant part is $\anchord$ since the de Rham complex construction is a left adjoint to the forgetful functor from graded mixed algebras to dg algebras (e.g.\@ \cite{TV11} Prop.\@ 2.3). Hence having specified $\grami\bullet\bullet$ with $\grami\bullet{0}=\dera{A}$ we can reconstruct $\anchord$. However, we will need $\anchord$ to pull back integrable distributions over morphisms of dg algebras, so it is useful to fix $\anchord$ as part of the structure.

Our definition is not the most general one, because we could relax it by demanding that $\drham\bullet{}{\dera{A}}\rightarrow\grami\bullet\bullet$ is only weakly equivalent as a morphism of graded mixed algebras under $\drham\bullet{}{\dera{A}}$ to a morphism that we have described. However, for practical purposes one always chooses such representatives, and it makes sense to put this in the definition.\end{remark}
\begin{remark} On the $\dera{A}$-linear dual of $\anchord\colon{\drham{}{}{\dera{A}}}\rightarrow\suspensi{\grami\bullet{1}}{-1}$, that we will denote by $\anchor\colon\lalge\rightarrow\tanga{\dera{A}}$, we have the structure of a dg Lie--Rinehart algebra over $\dera{A}$. Recall that \emph{a Lie--Rinehart algebra over $\dera{A}$} is given by a perfect dg $\dera{A}$-module $\lalge$, an $\dera{A}$-linear map $\anchor\colon\lalge\rightarrow\tanga{\dera{A}}$ called \emph{the anchor}, and a $\mathbb C$-linear Lie bracket on $\lalge$ making $\anchor$ a morphism of dg Lie algebras. 

The category of dg Lie--Rinehart algebras over $\dera{A}$ is (anti-)equivalent to the category of integrable distributions according to Def.\@ \ref{DefFol} (e.g.\@ \cite{Vaintrob}). Here is an explicit description of this correspondence. Since inverting a function $f\in\deri{A}{}{0}$ is equivalent to adjoining a free variable $x$ and then dividing by the ideal generated by $xf-1$, we immediately see that to define Lie--Rinehart algebras over $\dera{A}$ it is enough to do so over a cover $\{\dera{A}[f_i^{-1}]\}$, and similarly for integrable distributions. Thus we can assume that $\grami\bullet{1}$ is generated as an $\undera{A}$-module by a finite sequence of trivializable $\deri{A}{}{0}$-modules of finite rank.

Omitting $\cirdi$ we have that $\grami\bullet\bullet$ is free as a commutative algebra over $\dera{A}$. Therefore every derivation of $\grami\bullet\bullet$ is a $\grami\bullet\bullet$-linear combination of derivations of $\dera{A}$ and elements of $\dumo{\grami\bullet{1}}:=\hom{\deri{A}{}{\bullet}}(\grami\bullet{1},\deri{A}{}{\bullet})$. So we can write 
	\begin{equation*}\cirdi=\cirdi_0+\cirdi_1,\quad\cirdi_0\in\grami\bullet{1}\underset{\deri{A}{}{0}}\otimes\tanga{\deri{A}{}{0}},\quad\cirdi_1\in\grami\bullet{2}\underset{\dera{A}}\otimes\dumo{\grami\bullet{1}}
	\quad\deg\cirdi_0=\deg\cirdi_1=-1.\end{equation*} 
Interpreting $\cirdi_0$ as $\anchor\colon\lalge=\suspensi{\dumo{\grami\bullet{1}}}{1}\rightarrow\tanga{\dera{A}}$ we obtain the anchor map. Viewing $\cirdi_0$ as defined on all of $\grami\bullet\bullet$, not just on $\grami\bullet{0}$, is possible by choosing a flat connection $\nabla$ on the generating $\deri{A}{}{0}$-submodules of $\grami\bullet{1}$. Degree $-1$ isomorphism $\tau\colon\dumo{\grami\bullet{1}}\rightarrow\lalge$ allows us to translate $\cirdi_1$ into $\cirdi'_1\colon\lalge\underset{\dera{A}}\otimes\lalge\rightarrow\lalge$, $\deg\cirdi'_1=0$.\footnote{One should not forget the Koszul sign rule: $\cirdi'_1(\tau(\lambda_1),\tau(\lambda_2))=(-1)^{\lambda_1}\tau(\cirdi_1(\lambda_1,\lambda_2))$.} Using the obvious $\pi\colon\lalge\underset{\mathbb C}\otimes\lalge\rightarrow\lalge\underset{\dera{A}}\otimes\lalge$ we define $[-,-]\colon\lalge\underset{\mathbb C}\otimes\lalge\rightarrow\lalge$ as follows
	\begin{equation}\label{LieBracket}(l_1,l_2)\longmapsto\nabla_{\anchor(l_1)}l_2-(-1)^{l_1 l_2}\nabla_{\anchor(l_2)}l_1+\cirdi'_1(\pi(l_1\otimes l_2)).\end{equation}
Direct computation shows that this indeed gives us a Lie algebroid (e.g.\@ \cite{Vaintrob}).  Choosing another flat connection adds an $\dera{A}$-linear term to the right hand side of (\ref{LieBracket}), that is then absorbed into $\cirdi'_1$, i.e.\@ this construction does not depend on the choice of a connection. \hide{%
Indeed, let $\{x^t\}$ be generators of $\deri{A}{}{0}$ and let $\{y^i\}$, $\{z^l\}$ be two possible choices for generators of $\grami\bullet{1}$ over $\dera{A}$. We will denote by $\{\partial_{y^i}\}$ the corresponding generators of $\dumo{\grami\bullet{1}}$. We will denote the copy of $\{x^t\}$ that makes up a coordinate system together with $\{z^l\}$ by $\{w^t\}$. We can write $\forall l$ $z^l=\underset{i}\sum\,\gamma^l_i y^i$, $\gamma_i^l\in\deri{A}{}{0}$. Then we have
	\begin{equation*}\partial_{y^i}=\underset{l}\sum\,\gamma^l_i\partial_{z^l},\quad\partial_{x^t}=\partial_{w^t}+\underset{i,l}\sum\,\partial_{x^t}(\gamma^l_i)y^i\partial_{z^l}.\end{equation*}
Let $\{\mu_l^i\}$ be the inverse of the matrix $\{\gamma^l_i\}$. Then we can write
	\begin{equation*}\cirdi=\underset{i,j,k}\sum\,c^k_{i,j}y^i y^j\partial_{y^k}+\underset{i,t}\sum\,a^t_iy^i\partial_{x^t}=\underset{l,m,n}\sum\,d^n_{l,m}z^l z^m\partial_{z^n}+\underset{l,t}\sum\,b^t_lz^l\partial_{w^t}=\end{equation*}
	\begin{equation*}=\underset{i,j,k,l,m,n}\sum\,d^n_{l,m}\gamma^l_iy^i\gamma^m_jy^j\mu^k_n\partial_{y^k}+
	\underset{i,l,t}\sum\,b^t_l\gamma^l_i y^i\Big(\partial_{x^t}-\underset{j,k,m}\sum\,\partial_{x^t}(\gamma^m_j)y^j\mu^k_m\partial_{y^k}\Big).\end{equation*}
Comparing the first expression with the last we obtain
	\begin{equation*}c^k_{i,j}=\underset{l,m,n}\sum d^n_{l,m}\gamma^l_i\gamma^m_j\mu^k_n-
	\underset{l,m,t}\sum\,\Big(b^t_l\gamma^l_i\partial_{x^t}(\gamma^m_j)\mu^k_m+(-1)^{y^i y^j}b^t_l\gamma^l_j\partial_{x^t}(\gamma^m_i)\mu^k_m\Big).\end{equation*}
The definition of the bracket on $\lalge$ gives us
	\begin{equation*}[\tau(\partial_{y^i}),\tau(\partial_{y^j})]=\underset{k}\sum\,(-1)^{y^i}c^k_{i,j}\partial_{y^k}=\underset{k,l}\sum\,c^k_{i,j}\gamma^l_k\partial_{z^l}.\end{equation*}
On the other hand
	\begin{equation*}[\tau(\partial_{y^i}),\tau(\partial_{y^j})]=[\underset{l}\sum\,\gamma^l_i\tau(\partial_{z^l}),\underset{m}\sum\,\gamma_j^m\tau(\partial_{z^m})]=
	\underset{l,m,n}\sum(-1)^{z^l}\gamma^l_i\gamma^m_j d^n_{l,m}\tau(\partial_{z^n})+\end{equation*}
	\begin{equation}\label{End}+\underset{l,m}\sum\,\gamma^l_i\tau(\partial_{z^l})(\gamma^m_j)\tau(\partial_{z^m})-\underset{l,m}\sum\,(-1)^{z^l z^m}\gamma^m_j\tau(\partial_{z^m})(\gamma^l_i)\tau(\partial_{z^l}).\end{equation}
Now we compare this to the expression we found for $c^k_{i,j}$. We notice that $\partial_{z^n}=\underset{k}\sum\,\mu^k_n\partial_{y^k}$ and hence the first summand agrees with the first one in the expression for $c^k_{i,j}$. The last two sums in (\ref{End}) are not $0$, only if $\deg z_l,\deg z_m=-1$, in which case the last sum in the expression for $c^k_{i,j}$ agrees with the two sums in (\ref{End}) up to the sign. The sign is then brought by the coefficient before $c^k_{i,j}$ $(-1)^{y^i}=-1$. }%
Conversely, if we start with a Lie--Rinehart algebra, a choice of a flat connection on $\lalge$ gives us another bracket on $\lalge$, and the difference between the two brackets produces $\cirdi'_1$ as in (\ref{LieBracket}). The anchor gives $\cirdi_0$.\end{remark}

Since differential graded algebras correspond to (parts of) formal neighborhoods of the classical loci, it is very easy to construct integrable distributions in the derived setting, where integrability holds for degree reasons. The following example will be used repeatedly in this work.

\begin{example}\label{SimpleEx} For any well presented $\dera{A}$ with $\{\bundle{P}{k}{}\}_{k<0}$ being the generating bundles in negative degrees, the tangent complex 
	\begin{equation*}\tanga{\dera{A}}=\hom{\dera{A}}(\drham{}{}{\dera{A}},\dera{A})\end{equation*}
is a perfect complex, with the underlying graded module being freely generated over $\undera{A}$ by $\tanga{\deri{A}{}{0}}$ and 
	\begin{equation*}\bundle{E}{-k}{}:=\hom{A^0}(\bundle{P}{k}{},A^0),\quad k<0.\end{equation*} 
If we choose $m>0$, and a sub-bundle $\bundle{E}{m}{-}\subseteq\bundle{E}{m}{}$, the dg submodule $(\tanga{\dera{A}})_-\subseteq\tanga{\dera{A}}$ generated by $\bundle{E}{m}{-}\oplus(\underset{k>m}\bigoplus\bundle{E}{k}{})$ is clearly closed with respect to the Lie bracket (for degree reasons). Let $\bundle{P}{-m}{-}\subseteq\bundle{P}{-m}{}$ be the orthogonal complement of $\bundle{E}{m}{-}$, and let $\drham\bullet{}{\dera{A}}_-\subseteq\drham\bullet{}{\dera{A}}$ be the graded mixed ideal generated by $\suspensi{\big(\bundle{P}{-m}{-}\oplus(\underset{k<m}\bigoplus\,\bundle{P}{-k}{})\big)}{1}$. The same degree reasons that caused $(\tanga{\dera{A}})_-\subseteq\tanga{\dera{A}}$ to be closed with respect to the Lie bracket, imply that $\drham\bullet{}{\dera{A}}_-$ is just the usual dg ideal generated by $\suspensi{\big(\bundle{P}{-m}{-}\oplus(\underset{k<m}\bigoplus\,\bundle{P}{-k}{})\big)}{1}$. Defining 
	\begin{equation*}\drham\bullet{}{\dera{A}}_+:=\drham\bullet{}{\dera{A}}/
	\drham\bullet{}{\dera{A}}_-\end{equation*}
we obviously have an integrable distribution in the sense of Definition \ref{DefFol}.\end{example}

Quotients of derived schemes and stacks by integrable distributions are formulated in the category of formal derived stacks (e.g.\@ \cite{P14}). We will not review the theory of formal derived stacks here (e.g.\@ \cite{CPTVV}), but we will recall the definition of the quotient.

\begin{definition} (E.g.\@ \cite{P14}) Let $\dersch$ be a derived scheme and let $\grami\bullet\bullet$ be an integrable distribution on $\dersch$. \emph{A quotient of $\dersch$ by $\grami\bullet\bullet$} is a morphism of formal derived stacks $\dersch\rightarrow\quotient{\dersch}{\grami\bullet\bullet}$ that solves the following universal problem: given any morphism of formal derived stacks $\phi\colon\dersch\rightarrow\dersch'$ and any morphism of integrable distributions $\drham\bullet{}{\dersch/\dersch'}\rightarrow\grami\bullet\bullet$ there is a unique $\dersch/\grami\bullet\bullet\rightarrow\dersch'$ that factorizes $\phi$.\end{definition}

\begin{example} Consider $\grami\bullet\bullet=\drham\bullet{}{\dersch}$. In this case the universal property forces $\dersch/\grami\bullet\bullet$ to have trivial cotangent complex everywhere, i.e.\@ the value of $\dersch/\grami\bullet\bullet$ on $\spec{\dera{A}}$ is the value of $\dersch$ on $\spec{\reduced{\dera{A}}}$. This $\dersch/\grami\bullet\bullet$ has a name: \emph{the de Rham space of} $\dersch$ (e.g.\@ \cite{GR14}). We will denote it by $\derhamspace{\dersch}$.\end{example}
The example of de Rham space of a scheme is a particular case of a relative construction when we consider $\dersch$ over the point. The general relative case is one of our main interests in this paper, as we would like to divide by distributions that are the tangent distributions of some morphism. The following simple statement gives a way to compute the result of such division.

\begin{proposition}(E.g.\@ \cite{P14}) Let $\dersch\rightarrow\dersch'$ be a morphism of derived schemes, and let $\grami\bullet\bullet$ be the relative de Rham complex. Then $\dersch/\grami\bullet\bullet$ is a homotopy limit of the following diagram
	\begin{equation}\label{QuotientPullBack}\begin{tikzcd} & \derhamspace{\dersch}\ar[d]\\
	\dersch'\ar[r] & \derhamspace{\dersch'}.\end{tikzcd}\end{equation}
\end{proposition}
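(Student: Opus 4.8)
\emph{Strategy.} Write $Q:=\quotient{\dersch}{\grami\bullet\bullet}$ and $P:=\dersch'\times_{\derhamspace{\dersch'}}\derhamspace{\dersch}$ for the homotopy limit in (\ref{QuotientPullBack}), where $\grami\bullet\bullet=\drham\bullet{}{\dersch/\dersch'}$ is the relative de Rham complex viewed as a distribution on $\dersch$. The plan is to produce a canonical comparison map $v\colon Q\to P$ and then to show it is an equivalence by a deformation-theoretic argument. Throughout I use two facts recalled above: de Rham spaces have vanishing cotangent complex, and $\derhamspace{\dersch}=\quotient{\dersch}{\drham\bullet{}{\dersch}}$. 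I also use that the relative cotangent complex of a quotient map $\dersch\to\quotient{\dersch}{\grami\bullet\bullet}$ is the weight-$1$ part $\grami\bullet{1}$ of the distribution, which is a basic property of the quotient construction (\cite{P14}).

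\emph{The comparison map.} To build $v$ as a map into the homotopy pullback $P$ I assemble three pieces using the universal property of the quotient. First, the identity morphism of distributions $\grami\bullet\bullet\to\grami\bullet\bullet$ produces a map $\bar\phi\colon Q\to\dersch'$ factoring the given $\phi\colon\dersch\to\dersch'$. Second, the anchor surjection $\drham\bullet{}{\dersch}\to\grami\bullet\bullet$, together with $\derhamspace{\dersch}=\quotient{\dersch}{\drham\bullet{}{\dersch}}$ and the identification $\drham\bullet{}{\dersch/\derhamspace{\dersch}}\simeq\drham\bullet{}{\dersch}$, produces a map $\bar\eta\colon Q\to\derhamspace{\dersch}$. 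Third, both composites $Q\to\derhamspace{\dersch'}$ (along $\bar\phi$ and along $\bar\eta$) factor the canonical map $\dersch\to\derhamspace{\dersch'}$ and correspond to the same distribution morphism $\drham\bullet{}{\dersch}\to\grami\bullet\bullet$ (using $\drham\bullet{}{\dersch/\derhamspace{\dersch'}}\simeq\drham\bullet{}{\dersch}$), so the uniqueness clause of the quotient's universal property supplies a homotopy between them. The universal property of $P$ then yields $v\colon Q\to P$; by construction $v$ is compatible with the projections to $\dersch'$ and $\derhamspace{\dersch}$ and satisfies $v\circ q\simeq j$, where $q\colon\dersch\to Q$ is the quotient map and $j\colon\dersch\to P$ is the map induced by the naturality square of the unit $\dersch\to\derhamspace{\dersch}$ applied to $\phi$.

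\emph{$v$ is an equivalence.} On classical points $q$ is an equivalence (quotienting by a distribution does not alter reduced points) and so is $j$ (for a reduced test point $\reduced{S}=S$ the fibre product defining $P$ collapses to $\dersch(S)$); hence $v$ is an equivalence on classical points. For the infinitesimal structure, $\cotan{\derhamspace{\dersch}}=\cotan{\derhamspace{\dersch'}}=0$ shows $\derhamspace{\dersch}\to\derhamspace{\dersch'}$ is formally étale, and therefore so is its base change $P\to\dersch'$; the transitivity triangle for $\dersch\to P\to\dersch'$ then gives $\cota{\dersch}{P}\simeq\cota{\dersch}{\dersch'}$. On the other hand $\cota{\dersch}{Q}\simeq\grami\bullet{1}\simeq\cota{\dersch}{\dersch'}$ by the quotient-map property above. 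Feeding these into the transitivity triangle $q^{*}\cota{Q}{P}\to\cota{\dersch}{P}\to\cota{\dersch}{Q}$, in which the second arrow is the identity of $\cota{\dersch}{\dersch'}$ because $v$ is compatible with the maps to $\dersch'$, forces $q^{*}\cota{Q}{P}\simeq 0$. Since $q$ is an equivalence on reduced points and $\cota{Q}{P}$ is controlled by its restriction to the classical locus, this gives $\cota{Q}{P}\simeq 0$; together with the equivalence on classical points, the recognition criterion for morphisms of formal derived stacks (\cite{CPTVV}) shows $v$ is an equivalence.

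\emph{Main obstacle.} The two formal steps above are routine bookkeeping with universal properties; the substance is in the last paragraph. The delicate inputs are the identification $\cota{\dersch}{Q}\simeq\grami\bullet{1}$, which is exactly where the hypothesis that $\grami\bullet\bullet$ is a \emph{relative de Rham complex} (rather than an arbitrary distribution) is used, and the passage from the vanishing of $q^{*}\cota{Q}{P}$ along the classical locus to a genuine equivalence of formal derived stacks, which relies on the deformation theory of \cite{CPTVV}. I would take particular care that $v$ is constructed compatibly with all the structure maps, so that the cotangent-complex identifications are genuinely induced by $v$ itself and the conclusion $\cota{Q}{P}\simeq 0$ is legitimate.
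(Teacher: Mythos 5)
Your proof is correct in substance, but it takes a genuinely different route from the paper's. The paper's own proof is a one-liner: it observes that the Example identifying $\dersch/\drham\bullet{}{\dersch}$ with $\derhamspace{\dersch}$ (the universal property forces trivial cotangent complex, so the value on $\spec{\dera{A}}$ is the value of $\dersch$ on $\spec{\reduced{\dera{A}}}$) can be run verbatim in the slice category of derived schemes over $\dersch'$; the functor of points of the quotient is then immediately $S\mapsto \dersch(\reduced{S})\times_{\dersch'(\reduced{S})}\dersch'(S)$, which is exactly the functor of points of the homotopy pullback in (\ref{QuotientPullBack}). You instead build a comparison map $v\colon Q\to P$ from the universal properties and verify it is an equivalence by deformation theory: equivalence on reduced points plus $\cota{Q}{P}\simeq 0$, the latter obtained from the transitivity triangle, formal \'etaleness of $P\to\dersch'$, and the identification $\cota{\dersch}{Q}\simeq\grami\bullet{1}\simeq\cota{\dersch}{\dersch'}$. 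What the paper's approach buys is economy: it never needs the cotangent complex of the quotient map, nor conservativity of $q^*$, nor the recognition criterion for formal derived stacks. What your approach buys is robustness and transparency about the infinitesimal mechanism; it would apply even where a direct functor-of-points identification is awkward. The cost is that your two key inputs -- that the relative cotangent complex of $\dersch\to Q$ is the weight-$1$ part of the distribution, and that vanishing of $q^*\cota{Q}{P}$ together with $q$ being a nil-isomorphism forces $\cota{Q}{P}\simeq 0$ -- are facts about the quotient construction of comparable depth to the proposition itself, so they must genuinely be imported from \cite{P14} and \cite{CPTVV} rather than re-derived, exactly as you flag in your last paragraph. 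With those citations granted, the argument closes.
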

\begin{proof} The argument is exactly the same as in the case of the absolute de Rham space, but applied in the category of derived schemes over $\dersch'$.\end{proof}%
The last proposition has a corollary, that is very important for this paper.
\begin{corollary} Let $\dersch\rightarrow\dersch'$ be a morphism of derived schemes, and let $\grami\bullet\bullet$ be the relative de Rham complex. Suppose that $\reduced{\dersch}\overset\cong\longrightarrow\reduced{\dersch'}$, then $\dersch'\cong\dersch/\grami\bullet\bullet$.\end{corollary}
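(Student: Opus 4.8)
The plan is to reduce the statement to the preceding Proposition together with the single observation that the de Rham space of a derived scheme depends only on its reduction. First I would apply that Proposition to present $\quotient{\dersch}{\grami\bullet\bullet}$ as the homotopy limit of the cospan (\ref{QuotientPullBack}), that is, as the homotopy fiber product
\begin{equation*}\quotient{\dersch}{\grami\bullet\bullet}\;\simeq\;\dersch'\underset{\derhamspace{\dersch'}}\times\derhamspace{\dersch},\end{equation*}
fitting into the homotopy cartesian square
\begin{equation*}\begin{tikzcd}\quotient{\dersch}{\grami\bullet\bullet}\ar[r]\ar[d] & \derhamspace{\dersch}\ar[d]\\ \dersch'\ar[r] & \derhamspace{\dersch'}.\end{tikzcd}\end{equation*}

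The heart of the argument is to show that the right-hand vertical map $\derhamspace{\dersch}\rightarrow\derhamspace{\dersch'}$ is an equivalence. For this I would recall, as in the Example identifying the de Rham space, that for any test algebra $\dera{A}$ one has $\derhamspace{\dersch}(\spec{\dera{A}})=\dersch(\spec{\reduced{\dera{A}}})$, and that $\reduced{\dera{A}}$ is reduced, so that a morphism out of $\spec{\reduced{\dera{A}}}$ into $\dersch$ factors uniquely through $\reduced{\dersch}$. Consequently $\derhamspace{\dersch}$ depends only on $\reduced{\dersch}$; more precisely, the map $\derhamspace{\dersch}\rightarrow\derhamspace{\dersch'}$ induced by $\dersch\rightarrow\dersch'$ is canonically identified with the de Rham space of the morphism $\reduced{\dersch}\rightarrow\reduced{\dersch'}$. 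Since the latter is an isomorphism by hypothesis, the induced map on de Rham spaces is an equivalence.

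Finally I would conclude by stability of equivalences under homotopy base change: in the cartesian square above the left vertical arrow $\quotient{\dersch}{\grami\bullet\bullet}\rightarrow\dersch'$ is the pullback of the equivalence $\derhamspace{\dersch}\rightarrow\derhamspace{\dersch'}$ along $\dersch'\rightarrow\derhamspace{\dersch'}$, hence is itself an equivalence. This yields the desired identification $\dersch'\cong\quotient{\dersch}{\grami\bullet\bullet}$.

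The step I expect to carry the real content is the middle one: pinning down that the morphism of de Rham spaces is precisely the de Rham space of $\reduced{\dersch}\rightarrow\reduced{\dersch'}$, so that the hypothesis upgrades a reduced isomorphism to a genuine equivalence of de Rham spaces rather than a mere pointwise bijection. Everything else, namely unwinding the homotopy limit via the preceding Proposition and invoking base-change stability, is formal.
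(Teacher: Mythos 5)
Your proof is correct and follows essentially the same route as the paper's: the paper likewise observes that $\reduced{\dersch}\cong\reduced{\dersch'}$ forces $\derhamspace{\dersch}\cong\derhamspace{\dersch'}$, whence $\dersch'$ itself is a homotopy pullback of the square (\ref{QuotientPullBack}) computing $\quotient{\dersch}{\grami\bullet\bullet}$. The only difference is that you spell out the functor-of-points justification (maps from $\spec{\reduced{\dera{A}}}$ factor through the reduction) and the base-change step, which the paper leaves as ``obviously.''
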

\begin{proof} The assumption that $\reduced{\dersch}\cong\reduced{\dersch'}$ implies that $\derhamspace{\dersch}\cong\derhamspace{\dersch'}$, and then obviously $\dersch'$ is a homotopy pullback of (\ref{QuotientPullBack}).\end{proof}%

In the notation of the previous corollary, suppose we would like to reconstruct $\dersch$ from $\dersch'$. It was explained in \cite{P14}, that if there is a shifted symplectic structure on $\dersch$ and $\grami\bullet\bullet$ is a Lagrangian distribution, we can reconstruct $\dersch$ from $\dersch'$ as a derived critical locus of a shifted potential defined on $\dersch'$. 

It is the main purpose of this paper to show that in the case of moduli spaces of sheaves on Calabi--Yau manifolds, we can always choose a Lagrangian distribution, which, after division, allows us to encode the entire moduli space as a shifted potential on a moduli space of $Spin(7)$-instantons. However, since this requires working with stacks, rather than just derived schemes, we postpone the actual construction of the potential to another paper.

\subsection{Pull-backs and derived foliations}

Recall (\cite{TV11} \S2) that the category of mixed complexes admits a model structure with weak equivalences being quasi-isomorphisms with respect to $\homdi{}$ and fibrations being surjective maps (in negative degrees). The same model structure (and without the restriction on complexes to be non-positively graded) is obtained by applying the general machinery of algebras over operads (e.g.\@ \cite{Hinich}), where we view the mixed algebras as commutative algebras together with a unary operation of degree $-1$. Similar constructions are applicable also in presence of the additional grading by weight. In particular every graded mixed algebra is fibrant, which justifies the following definition.

\begin{definition}\label{DefEquivalenceDist} Two integrable distributions $\drham{\bullet}{}{\deri{A}{}\bullet}\rightarrow\gramii\bullet\bullet{1}$, $\drham{\bullet}{}{\deri{A}{}\bullet}\rightarrow\gramii\bullet\bullet{2}$ are \emph{equivalent}, if there is a third distribution $\drham{\bullet}{}{\deri{A}{}\bullet}\rightarrow\gramii\bullet\bullet{3}$ and weak equivalences $(\gramii\bullet\bullet{3},\homdi{})\overset\simeq\longrightarrow(\gramii\bullet\bullet{1},\homdi{})$, $(\gramii\bullet\bullet{3},\homdi{})\overset\simeq\longrightarrow(\gramii\bullet\bullet{2},\homdi{})$ making the following diagram of graded mixed algebras commutative
	\begin{equation*}\begin{tikzcd} \gramii\bullet\bullet{1} & \drham{\bullet}{}{\deri{A}{}\bullet} \ar[r]\ar[l] \ar[d] & \gramii\bullet\bullet{2}  \\
	& \gramii\bullet\bullet{3}.\ar[lu,"\simeq"]\ar[ru,"\simeq",swap] &\end{tikzcd}\end{equation*}
\end{definition}
Given a morphism $\phi\colon\deri{A}{1}\bullet\rightarrow\deri{A}{2}\bullet$ we have the natural \emph{pull-back functor} from the category of integrable distributions over $\deri{A}{1}\bullet$ to the category of integrable distributions over $\deri{A}{2}\bullet$. Explicitly  $\drham{\bullet}{}{\deri{A}{1}\bullet}\rightarrow\gramii\bullet\bullet{1}$ is mapped to the homotopy push-out of
	\begin{equation}\label{DistributionPushout}\begin{tikzcd} \drham{\bullet}{}{\deri{A}{1}\bullet} \ar[r] \ar[d] & \gramii\bullet\bullet{1}  \\
	\drham\bullet{}{\deri{A}{2}\bullet} &\end{tikzcd}\end{equation}
computed in the category of graded mixed algebras. We will denote the result by $\pullback{\phi}{\gramii\bullet\bullet{1}}$. To convince ourselves that this is still an integrable distribution we need to take a closer look at the actual computation of this homotopy push-out. We could use the usual categorical push-out, if $\drham\bullet{}{\deri{A}{1}{}}$ was cofibrant and either one of $\drham{\bullet}{}{\deri{A}{1}\bullet}\rightarrow\drham{\bullet}{}{\deri{A}{2}\bullet}$ or $\drham{\bullet}{}{\deri{A}{1}\bullet}\rightarrow\gramii\bullet\bullet{1}$ was a cofibration. Naturally we would like to have the latter as a cofibration.

It is helpful here to make use of the category of Lie--Rinehart algebras, which is (anti-)equivalent to the category of integrable distributions, but not to the category of all graded mixed algebras under $\drham{\bullet}{}{\deri{A}{1}\bullet}$. The latter is much bigger. Being the category of algebras over a (colored) operad, also the category of Lie--Rinehart algebras possesses a model structure with weak equivalences being quasi-isomorphisms and fibrations being surjective maps. 

Viewing the anchor as a morphism of Lie--Rinehart algebras and factorizing it, we obtain a resolution with a surjective anchor.  In terms of integrable distributions a surjective anchor corresponds to a cofibration of graded mixed algebras. Thus taking resolutions, if necessary, we can always assume that $\drham{\bullet}{}{\deri{A}{1}\bullet}\rightarrow\gramii\bullet\bullet{1}$ is a cofibration. This shows that the homotopy push-out in (\ref{DistributionPushout}) produces another integrable distribution.

It remains to clarify when $\drham\bullet{}{\deri{A}{1}{}}$ is cofibrant. Since the homotopy push-out is naturally mapped to the usual push-out of (\ref{DistributionPushout}), we can use the usual push-out as long as $\drham\bullet{}{\deri{A}{1}{}}$ is \emph{locally} cofibrant on $\spec{\deri{A}{1}\bullet}$. The de Rham complex construction is a left Quillen functor from the category of dg algebras to that of graded mixed algebras (e.g.\@ \cite{TV11} Prop.\@ 2.3). Hence the usual push-out in (\ref{DistributionPushout}) is also a homotopy push-out, if $\deri{A}{1}\bullet$ is locally cofibrant, which is always true for well presented dg $\cinfty$-rings.

\medskip

Having the ability to substitute any integrable distribution with a cofibrant resolution, that is still an integrable distribution, we can compute pullbacks. For example given $f\in\deri{A}{}{0}$ and a cofibrant $\drham\bullet{}{\dera{A}}\rightarrow\grami\bullet\bullet$ we can restrict this distribution to $\dera{A}[f^{-1}]$, i.e.\@ we can pull it back along the inclusion. We will denote the result by $\grami\bullet\bullet[f^{-1}]$ and in this way we obtain a pre-sheaf of cofibrant integrable distributions on $\spec{\dera{A}}$.

Now we are ready to formalize an important property of Example \ref{SimpleEx}. There the morphism $\drham\bullet{}{\dera{A}}\rightarrow\grami\bullet\bullet$ is given as a factorization by a graded mixed ideal of $\drham\bullet{}{\dera{A}}$. We would like to be able to single out integrable distributions that have such presentation. Moreover, we would like this property to hold locally after pull-back to any dg algebra that is weakly equivalent to $\dera{A}$. This is a rather strong condition.

\begin{definition}\label{DefDerivedFoliation} Let $\dera{A}$ be a well presented dg algebra. An integrable distribution $\anchord\colon\drham{\bullet}{}{\dera{A}}\rightarrow\grami\bullet\bullet$ is a \emph{derived foliation}, if $\forall\pt\colon\dera{A}\rightarrow\mathbb C$ there is $f\in\deri{A}{}{0}$ and a weak equivalence $\phi\colon\dera{A}[f^{-1}]\overset\simeq\longrightarrow\deri{A}{1}{\bullet}$, s.t.\@ $\deri{A}{1}{\bullet}$ is well presented and minimal at $\pt$ and $\pullback{\phi}{\grami\bullet\bullet}\simeq\gramii\bullet\bullet{1}$ under $\drham\bullet{}{\deri{A}{1}\bullet}$, where $\gramii\bullet\bullet{1}$ is a quotient of $\drham\bullet{}{\deri{A}{1}\bullet}$ by a graded mixed ideal with generators having weight $1$.\end{definition}
It is clear from this definition that the property of being a derived foliation is invariant with respect to weak equivalences of integrable distributions and stable under pull-backs of distributions from one dg algebra to another. 

\begin{remark} By far not every integrable distribution is a derived foliation. In the notation of Def.\@ \ref{DefDerivedFoliation}, since $\deri{A}{1}\bullet$ is minimal at $\pt$ we have that $\homdi{}=0$ on $\drham\bullet{}{\deri{A}{1}\bullet}$ modulo the maximal ideal of $\pt$. Then the same holds for $\gramii\bullet\bullet{1}$, and we conclude that $\homology{*}(\drham\bullet{}{\deri{A}{1}\bullet})|_\pt\rightarrow\homology{*}(\gramii\bullet\bullet{1})|_\pt$ is surjective. This is a non-trivial cohomological condition on a distribution.\end{remark}
\begin{remark} Given a cofibration $\deri{A}{1}\bullet\rightarrow\deri{A}{2}\bullet$ and a surjective $\drham\bullet{}{\deri{A}{1}\bullet}\rightarrow\grami\bullet\bullet$, the usual pull-back of $\grami\bullet\bullet$ to $\deri{A}{2}\bullet$ is again surjective, and  it is also the homotopy pull-back. Thus, if every minimal dg algebra was cofibrant, any derived foliation could have been represented locally as a quotient by a graded mixed ideal over \emph{every} choice of a dg algebra, not necessarily minimal. This observation is helpful in the $\cinfty$-case, since every well presented dg $\cinfty$-ring is locally cofibrant.\end{remark}

\medskip

Having defined integrable distributions on derived affine schemes we would like to describe such distributions on arbitrary derived schemes. For practical purposes it is useful to do this in terms of atlases. Then a distribution would be defined on each chart and coherent gluing data would be provided on intersections. 

To keep away from unnecessary generality we assume that \emph{our derived schemes are separated}, in particular intersections of derived affine charts are themselves derived affine schemes. We will denote by $\unspa$ the topological space of classical points, underlying a derived scheme $\dersch$. The topology on $\unspa$ will depend on the type of functions we work with: Zariski in the case of dg algebras and analytic in the case of dg $\cinfty$-rings.

\smallskip

Let $\big\{\spec{\deri{A}{j}{\bullet}}\big\}_{j\in J}$ be a derived affine atlas on $\dersch$. Again for the sake of practicality we assume that the corresponding open cover of $\unspa$ is locally finite. Moreover, we will always assume that any open cover of $\unspa$ has a locally finite refinement. For every ordered subset $\{j_0,\ldots,j_k\}\subseteq J$ of not necessarily distinct elements we have a derived affine scheme
	\begin{equation*}\spec{\deri{A}{j_0,\ldots,j_k}\bullet}:=\underset{0\leq s\leq k}\bigcap\spec{\deri{A}{j_s}\bullet}.\footnote{We write $\bigcap$ to mean the homotopy product over $\dersch$. Correspondingly we will write the pullback, e.g.\@ of differential forms, from a factor to this product as a restriction.}\end{equation*}

\begin{definition}\label{DefAtlasDistribution} \emph{An integrable distribution on an atlas $\big\{\spec{\deri{A}{j}{\bullet}}\big\}_{j\in J}$} is given by the following data
	\begin{equation*}\anchord=\big\{\anchord_{j_0,\ldots,j_k}\colon\rahm{\deri{A}{j_0,\ldots,j_k}{\bullet}}{\bullet}\rightarrow\gramii{\bullet}{\bullet}{j_0,\ldots,j_k}\;|\;k\geq 0,\,j_0,\ldots,j_k\in J\big\},\end{equation*}
	\begin{equation*}\conne{}=\big\{\conne{\makebo{j}{k}{s}}\colon(\gramii{\bullet}{\bullet}{j_0,\ldots,\widehat{j_s},\ldots,j_k})|_{\spec{\deri{A}{j_0,\ldots,j_k}{\bullet}}}\overset\simeq\longrightarrow\gramii{\bullet}{\bullet}{j_0,\ldots,j_k}
	\,|\,0\leq s\leq k\big\}\end{equation*}
subject to conditions: $\forall k\geq 0$ $\forall\{j_0,\ldots,j_k\}\subseteq J$ $(\gramii{\bullet}{\bullet}{j_0,\ldots,j_k},\anchord_{j_0,\ldots,j_k})$ is an integrable distribution on $\spec{\deri{A}{j_0,\ldots,j_k}\bullet}$ and, suppressing the notation for the pull-back of differential forms and bundles from $\spec{\deri{A}{j_0,\ldots,\widehat{j_s},\ldots,j_k}\bullet}$ to $\spec{\deri{A}{j_0,\ldots,j_k}\bullet}$, we have that $\forall s$ $\rahm{\deri{A}{j_0,\ldots,\widehat{j_s},\ldots,j_k}{\bullet}}{\bullet}\longrightarrow\gramii{\bullet}{\bullet}{j_0,\ldots,j_k}$ factors through $\conne{\makebo{j}{k}{s}}$, and the obvious cosimplicial identities hold for $\big\{\conne{\makebo{j}{k}{s}}\big\}$.\end{definition}
This is not the most general way to define an integrable distribution on an atlas. For example one could require that cosimplicial identities hold for the comparison quasi-isomorphisms $\big\{\conne{\makebo{j}{k}{s}}\big\}$ only up to coherent homotopy. However, this level of generality will be sufficient for our purposes.

\begin{remark} Notice that intersections of derived affine charts are defined up to weak equivalences. This implies that integrable distributions (and later shifted symplectic structures and isotropic structures) are defined on all possible choices of intersections of charts.\end{remark}

\section{Symplectic and Lagrangian structures}\label{SectionSymplecticStructures}

Having taken care of cotangent complexes and integrable distributions we turn to the main objects of our study: shifted symplectic structures and Lagrangian distributions. First we recall from \cite{PTVV13} the basic notions of homotopically closed and symplectic forms, and then give an explicit description of such structures on atlases. For practical purposes the main outcome is the construction on spaces of classical points of sheaves of symplectic forms and corresponding sheaves of isotropic distributions. Finding global sections of the latter is the main goal of this paper.

\subsection{Homotopically closed and symplectic differential forms}

Given a well-presented differential non-positively graded $\mathbb C$-algebra $\dera{A}$ we have the graded mixed complex of differential forms $(\drham{\bullet}{}{\dera{A}},\dedi,\homdi{})$. Here $\dedi$ is the de Rham differential and $\homdi{}$ is the cohomological differential. Presence of $\homdi{}$ allows us to formulate a homotopical version of the notion of a de Rham closed form. 

This was done in \cite{PTVV13} by using negative cyclic homology and explicitly in terms of infinite series of forms, that are cocycles for the sum of $\dedi$ and $\homdi{}$. We will call such structures \emph{homotopically closed differential forms}. 

\smallskip

Here is a quick summary of this construction. Recall that $\drham{\bullet}{}{\dera{A}}$ has a double grading by weight and cohomological degree, where $\dedi$, $\homdi{}$ have weights $1$, $0$ and degrees $-1$, $1$ respectively. Consider a parameter $\para$ to which we assign weight $-1$ and degree $2$. Then, following \cite{PTVV13} \S1.1, we define
	\begin{equation*}\forall k\geq 0,\forall n\leq 0\quad\negacy{n}{k}\big(\drham\bullet{}{\dera{A}}\big):=\Big\{\underset{j=0}{\overset{\infty}\sum}t^j\nform{j}\Big\},\end{equation*}
where each $\nform{j}$ is a $k+j$-form of degree $n-2j$, i.e.\@ chosen so that $t^j\nform{j}$ has degree $k$ and weight $n$. It is clear that $\homdi{}+t\dedi$ maps $\negacy{n}{k}\big(\drham\bullet{}{\dera{A}}\big)$ to $\negacy{n+1}{k}\big(\drham\bullet{}{\dera{A}}\big)$ and we can define a dg $\mathbb C$-space
	\begin{equation*}\negacy{\bullet}{k}\big(\drham\bullet{}{\dera{A}}\big):=\Big(\underset{n\leq 0}\bigoplus\,\negacy{n}{k}\big(\drham\bullet{}{\dera{A}}\big),\homdi{}+t\dedi\Big).\end{equation*}
This construction is called \emph{negative cyclic complex of weight $k$} in \cite{PTVV13}, and can be applied to any graded mixed complex (without the condition on cohomological degrees to be non-positive). It is important to understand functoriality of this construction, in particular its interaction with quasi-isomorphisms {with respect to $\homdi{}$}.

\begin{proposition} (\cite{PTVV13} Prop.\@ 1.3) The construction of weighted negative cyclic complexes is a right Quillen functor from the category of graded mixed complexes and quasi-isomorphisms (with respect to $\homdi{}$) to the category of dg complexes and quasi-isomorphisms. In particular this functor preserves quasi-isomorphisms.\end{proposition}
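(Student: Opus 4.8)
The plan is to verify the two defining properties of a right Quillen functor --- preservation of fibrations and of trivial fibrations --- to separately point to the left adjoint, and then to read off the last sentence. Recall that in the category of graded mixed complexes, as in the category of dg complexes, the fibrations are the surjections and the terminal object is $0$, so \emph{every} object is fibrant. Hence, once $\negacy\bullet k$ is known to be right Quillen, Ken Brown's lemma already guarantees that it preserves all $\homdi{}$-quasi-isomorphisms. I will in fact obtain this preservation directly, the trivial-fibration case being the special case of a surjective quasi-isomorphism.

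First I would dispose of fibrations. If $\grami\bullet\bullet\twoheadrightarrow\wt{\grami\bullet\bullet}$ is surjective, then in each cohomological degree $n$ the induced map $\negacy nk(\grami\bullet\bullet)\to\negacy nk(\wt{\grami\bullet\bullet})$ is the product over $j\geq 0$ of the surjections in weight $k+j$ and degree $n-2j$; a product of surjections is surjective, so fibrations are preserved.

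The heart of the argument is the behaviour on quasi-isomorphisms, and here I would filter. Equip $\negacy\bullet k(\grami\bullet\bullet)$ with the decreasing $\para$-adic filtration $F^p=\{\sum_{j\geq p}\para^j\nform j\}$. The differential $\homdi{}+\para\cirdi$ respects $F^\bullet$ since $\para\cirdi$ raises the power of $\para$ by exactly one, and on the associated graded the induced differential is just $\homdi{}$; concretely $F^p/F^{p+1}$ is, up to a degree shift, the single-weight complex $(\grami\bullet{k+p},\homdi{})$. Given a $\homdi{}$-quasi-isomorphism $f\colon\grami\bullet\bullet\to\wt{\grami\bullet\bullet}$, the map it induces on each associated-graded piece is $f$ in weight $k+p$, hence a quasi-isomorphism. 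Each finite truncation $\negacy\bullet k(\grami\bullet\bullet)/F^p$ then carries a finite filtration with these graded pieces, so an induction on $p$ (via the long exact sequences of the pairs, or the five lemma) shows that $\negacy\bullet k(f)$ induces an isomorphism on the cohomology of every $\negacy\bullet k(-)/F^p$. It remains to pass to the limit, and this is the one delicate point: because $\negacy\bullet k$ is an \emph{infinite} product over the powers of $\para$, a naive spectral-sequence comparison need not converge. What rescues the argument is completeness of the filtration, i.e.\@ the identity $\negacy\bullet k(\grami\bullet\bullet)=\varprojlim_p\negacy\bullet k(\grami\bullet\bullet)/F^p$, whose transition maps are \emph{degreewise surjective}. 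Degreewise surjectivity forces $\varprojlim^1$ of the cochain groups to vanish, so the Milnor exact sequence
\[
0\longrightarrow\varprojlim\nolimits^{1}_{p}\homology{n-1}\!\big(\negacy\bullet k(\grami\bullet\bullet)/F^p\big)\longrightarrow\homology{n}\big(\negacy\bullet k(\grami\bullet\bullet)\big)\longrightarrow\varprojlim\nolimits_{p}\homology{n}\big(\negacy\bullet k(\grami\bullet\bullet)/F^p\big)\longrightarrow 0
\]
holds naturally in the argument. Since $\negacy\bullet k(f)$ is an isomorphism on the cohomology of every truncation, it is an isomorphism on both the $\varprojlim$ and the $\varprojlim^1$ terms, and the five lemma yields that $\homology n(\negacy\bullet k(f))$ is an isomorphism for all $n$. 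Thus $\negacy\bullet k$ preserves quasi-isomorphisms, and surjective quasi-isomorphisms in particular, giving the trivial-fibration case.

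Finally I would confirm that $\negacy\bullet k$ is a genuine right adjoint rather than merely a homotopical functor. Identifying graded mixed complexes with weight-graded dg modules over $\mathbb C[\cirdi]$ (with $\cirdi^2=0$, $\cirdi$ in degree $-1$ and weight $1$), one sees weight by weight that $\negacy\bullet k$ is the dg-$\mathrm{Hom}$ out of the standard semifree resolution $(\mathbb C[\cirdi]\otimes\mathbb C[\para],\ \para\mapsto\cirdi)$ of the trivial module $\mathbb C$; its left adjoint is tensoring with this (cofibrant) resolution, which is left Quillen, as in \cite{PTVV13} Prop.\@ 1.3 and \cite{TV11}. Together with the two preservation statements above this shows $\negacy\bullet k$ is right Quillen. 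The only real obstacle throughout is the one flagged above: upgrading the associated-graded quasi-isomorphism to the unbounded infinite-product total complex, which the completeness of the $\para$-adic filtration and the vanishing of $\varprojlim^1$ make possible.
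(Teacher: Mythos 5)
Your proof is correct, but there is nothing in the paper to compare it against: the proposition is imported verbatim from \cite{PTVV13} (Prop.\@ 1.3) and the paper gives no argument of its own. Your write-up in effect combines the two standard routes. The final paragraph --- realizing $\negacy{\bullet}{k}$ as the dg-Hom out of a semifree resolution of $\mathbb C$ over $\mathbb C[\cirdi]$, with tensoring against that resolution as left adjoint --- is essentially the argument of \cite{PTVV13}; once that adjunction is in place, your product-of-surjections observation gives preservation of fibrations, and Ken Brown's lemma together with fibrancy of all objects would already give preservation of all $\homdi{}$-quasi-isomorphisms. Your middle section instead verifies this last point by hand, and this is the genuinely valuable part: the $\para$-adic filtration is complete with degreewise surjective transition maps, the associated graded of $F^p/F^{p+1}$ is the single-weight complex $(\grami{\bullet}{k+p},\homdi{})$ up to a shift, and the Milnor $\varprojlim^1$ sequence is exactly what is needed to pass from the truncations to the infinite \emph{product} totalization (a direct-sum totalization would need no such care); this elementary argument makes the preservation of quasi-isomorphisms independent of any model-categorical machinery. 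One small repair: in the resolution you denote $(\mathbb C[\cirdi]\otimes\mathbb C[\para],\ \para\mapsto\cirdi)$, the polynomial generator cannot literally be $\para$, which has degree $2$ and weight $-1$; it must be the dual variable $u$ of degree $-2$ and weight $+1$, with differential $u^{j}\mapsto\cirdi\,u^{j-1}$ (this is the map of degree $+1$), so that $\para=u^{\vee}$ only appears after dualizing, giving $\negacy{n}{k}=\prod_{j\geq 0}\grami{n-2j}{k+j}$ with differential $\homdi{}+\para\cirdi$. With that bookkeeping fixed, the degrees and weights close up and the argument is complete.
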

\begin{definition}(\cite{PTVV13} \S1.2) Let $\dera{A}$ be a well presented dg $\mathbb C$-algebra, $k\geq 0$, $n\leq 0$. \emph{A homotopically closed $k$-form of degree $n$} on $\spec{\dera{A}}$ is an element $\symstra\in\negacy{n-k}{k}\big(\drham\bullet{}{\dera{A}}\big)$ that is a cocycle with respect to $\homdi{}+t\dedi$.\footnote{Notice the shift by $k$ in $\negacy{n-k}{k}$. This is done so that a $2$-form of degree $n$ would define a morphism of degree $n$ from the tangent to the cotangent complexes.} Two such forms are \emph{equivalent}, if their difference is a $(\homdi{}+t\dedi)$-coboundary.\end{definition}
Since $\symstra$ is a series, it has a free term -- $\nform{0}$. This term will play a special role, and we will denote it by $\symfo$. Notice that $\symfo$ is not necessarily de Rham closed, but it has to be a $\homdi{}$-cocycle. In particular, when $k=2$, it defines a (shifted) morphism of dg $\dera{A}$-modules $\tanga{\dera{A}}\rightarrow\drham{}{}{\dera{A}}$.

\begin{definition}\label{DefSymAffine} (\cite{PTVV13} \S1.2) Let $\dera{A}$ be a well presented dg $\mathbb C$-algebra, $n\leq 0$. \emph{An $n$-shifted symplectic form} on $\spec{\dera{A}}$ is a homotopically closed $2$-form of degree $n$ $\symstra$, s.t.\@ $\symfo$ defines a quasi-isomorphism $\tanga{{\dera{A}}}\overset\simeq\longrightarrow\suspensi{\drham{}{}{\dera{A}}}{n}$.\end{definition}
As shifted symplectic forms are infinite series, they can be rather complicated to deal with. The following example provides a strict version, that is very useful in practice. As always we assume that $\dera{A}$ is well presented.
\begin{example} \label{StriFoEx} We will say that a $-2$-shifted symplectic form $\symstra$ is \emph{strict},\footnote{In \cite{BBJ13} such symplectic structure is said to be in `strong Darboux form'.}  if $\symstra=\symfo\in\drham{2}{}{\dera{A}}$, and, moreover, locally on $\spec{\deri{A}{}{0}}$ we have
	\begin{equation*}\symfo=\underset{1\leq i\leq m}\sum\dedi x_i\wedge\dedi z_i+\underset{1\leq j\leq n}\sum\dedi y_j\wedge\dedi y_j,\end{equation*}
where $x_i$, $y_j$, $z_i$ lie in $\dera{A}$, have cohomological degrees $0$, $-1$, $-2$ respectively, and $\symfo$ defines \underline{an isomorphism}  
	\begin{equation*}\tanga{\dera{A}}\overset{\cong}\longrightarrow\suspensi{\drham{}{}{\dera{A}}}{-2}.\end{equation*}
Having a $-2$-shifted strict symplectic form $\symfo$ on $\spec{\dera{A}}$ puts some restrictions and structures on $\dera{A}$:\begin{enumerate}
	\item the projective $A^0$-modules $\{\promo^k\}$ that generate $\undera{A}$ over $A^0$ can be non-trivial only in degrees $-1$ and $-2$, 
	\item $\symfo_2$ defines an $A^0$-linear isomorphism between $\drham{}{}{A^0}$ and $\dumo{\promo^{-2}}=:\ubun$, 
	\item $\symfo_2$ defines a non-degenerate symmetric $A^0$-bilinear form on $\dumo{\promo^{-1}}=:\tosplit{E}$.\end{enumerate}	
\end{example}
The following proposition is part of a local strictification result, valid for all negative shifts, that was proved in \cite{BBJ13}.

\begin{proposition}\label{LocalD} (\cite{BBJ13} Thm.\@ 5.18) Let $\dersch$ be a derived scheme, and let $\symstra$ be a $-2$-shifted symplectic structure on $\dersch$. Any $\pt\colon\spec{\mathbb C}\rightarrow\dersch$ factors through an \'etale $\phi\colon\spec{\dera{A}}\rightarrow\dersch$ with $\dera{A}$ being well presented, s.t.\@ $\phi^*(\symstra)$ is equivalent to a strict $-2$-shifted symplectic form on $\spec{\dera{A}}$.\end{proposition}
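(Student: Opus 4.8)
The plan is to reduce to an explicit local model and then put the symplectic series into normal form by a combination of diagonalization and a Poincar\'e-type homotopy. First I would produce a good chart. Since $\dersch$ is a derived scheme, every classical point $\pt$ lies in a well presented \'etale chart $\spec{\dera{A}}$; after inverting a suitable $f\in\deri{A}{}{0}$ and passing to the weakly equivalent minimal model furnished by the minimality proposition above, I may assume $\dera{A}$ is \emph{minimal at} $\pt$, i.e. $\homdi{}=0$ on $\drham{}{}{\dera{A}}|_\pt$. Replacing $\dera{A}$ by a weakly equivalent model preserves the derived scheme, so the composite chart is still \'etale, and on it the generating bundles $\{\promo^k\}$ form a minimal model: the internal differential has no linear part at $\pt$, so the fibres of $\tanga{\dera{A}}$ and $\drham{}{}{\dera{A}}$ at $\pt$ are read off directly from the $\{\promo^k\}$ via Proposition \ref{ComputingCotangent}.

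Next I would exploit non-degeneracy to pin down the degrees and the leading term. Since $\dera{A}$ is minimal at $\pt$, the fibre of $\tanga{\dera{A}}$ at $\pt$ coincides with its cohomology, and the leading term $\symfo$ of $\phi^*(\symstra)$ induces the isomorphism $\tanga{\dera{A}}\overset\simeq\longrightarrow\suspensi{\drham{}{}{\dera{A}}}{-2}$; this forces that cohomology to be concentrated in degrees $0,1,2$, so the generating bundles live only in degrees $0,-1,-2$. The induced pairing of degree $-2$ then matches degree $0$ with degree $-2$ and is symmetric on the self-dual degree $-1$ part. Working over $\mathbb C$, I would diagonalize the resulting non-degenerate symmetric form on $\tosplit{E}=\dumo{\promo^{-1}}$ to obtain the terms $\sum_j\dedi y_j\wedge\dedi y_j$, and choose dual bases for the degree-$0$/degree-$(-2)$ pairing to obtain $\sum_i\dedi x_i\wedge\dedi z_i$, exactly the Darboux shape of Example \ref{StriFoEx}.

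The hard part will be strictification: showing that the full homotopically closed series $\phi^*(\symstra)=\symfo+\para\nform{1}+\cdots\in\negacy{-4}{2}\big(\drham\bullet{}{\dera{A}}\big)$ is \emph{equivalent}, i.e. differs by a $(\homdi{}+\para\dedi)$-coboundary, from a form consisting of $\symfo\in\drham{2}{}{\dera{A}}$ alone with $\dedi\symfo=0$ on the nose. Here I would argue inductively by weight, using that on the graded-freely generated algebra the de Rham complex carries a contracting homotopy (a Poincar\'e lemma) relative to the chosen coordinates, so that each tail term $\para^j\nform{j}$ can be removed by subtracting an explicit $(\homdi{}+\para\dedi)$-coboundary, the correction being absorbed into an adjustment of the coordinates $x_i,y_j,z_i$. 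The delicate bookkeeping is to carry out these coordinate changes without destroying either the Darboux form already achieved for the leading term or the minimality of $\dera{A}$, and to verify that the non-degeneracy is upgraded from a quasi-isomorphism to the genuine isomorphism $\tanga{\dera{A}}\overset\cong\longrightarrow\suspensi{\drham{}{}{\dera{A}}}{-2}$ demanded by Example \ref{StriFoEx}.

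Finally, this is precisely the local Darboux theorem of Brav--Bussi--Joyce, whose inductive construction performs exactly this strictification; I would therefore close by invoking \cite{BBJ13} Thm.\@ 5.18.
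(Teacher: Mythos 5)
Your proposal is correct and ultimately takes the same route as the paper: Proposition \ref{LocalD} is stated there as a citation of \cite{BBJ13} Thm.\@ 5.18 (the local Darboux theorem), with no independent proof given, and your argument likewise closes by invoking that theorem. Your preliminary sketch (minimal model at $\pt$, degree bounds from non-degeneracy, diagonalization over $\mathbb C$, strictification of the closed-form series by weight induction) is essentially a recapitulation of the Brav--Bussi--Joyce construction rather than an alternative to it, so nothing further is needed.
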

For a symplectic form $\symstra$ to be defined over all of $\dersch$, which is not necessarily an affine derived scheme, means that we have a chosen symplectic form on each derived affine chart, and these structures coherently glue on intersections. We would like to give an explicit description of all this in terms of an atlas. In the notation of Section \ref{SectionDistributions} we have the following

\begin{definition}\label{DefSymAt} \emph{An $n$-shifted symplectic form on an atlas $\big\{\spec{\deri{A}{j}{\bullet}}\big\}_{j\in J}$} is given by the following data
	\begin{equation*}\symstra=\big\{\symstra_{j_0,\ldots,j_k}\in\negacy{n-2-k}{2}\big(\derham{\deri{A}{j_0,\ldots,j_k}\bullet}{\mathbb C}\bullet\big)\;|\;k\geq 0,\,j_0,\ldots,j_k\in J\big\},\end{equation*}
subject to conditions: $\forall j\in J$ $\symstra_{j}$ is an $n$-shifted symplectic form on $\spec{\deri{A}{j}\bullet}$ and, suppressing the notation for the pull-back of differential forms from $\spec{\deri{A}{j_0,\ldots,\widehat{j_s},\ldots,j_k}\bullet}$ to $\spec{\deri{A}{j_0,\ldots,j_k}\bullet}$, we have $\forall k\geq 1,\forall\{j_0,\ldots,j_k\}\subseteq J$
	\begin{equation*}(\homdi{}+t\dedi)(\symstra_{j_0,\ldots,j_k})=\underset{0\leq s\leq k}\sum(-1)^{s}\symstra_{j_0,\ldots,\widehat{j_s},\ldots,j_k}.\end{equation*}
\end{definition}

\subsection{Isotropic and Lagrangian distributions}

Having described a homotopical version of symplectic forms, we turn to a homotopical version of isotropic distributions.

\begin{definition}\label{DefIsoDist} (\cite{P14}) Let $\dera{A}$ be a well presented dg $\mathbb C$-algebra, and let $\symstra$ be an $n$-shifted symplectic structure on $\spec{\dera{A}}$. {\it An isotropic distribution} 
	\begin{equation*}\anchord\colon(\drham\bullet{}{\dera{A}},\symstra)\longrightarrow(\grami\bullet\bullet,\lagras)\end{equation*}
consists of an integrable distribution $\anchord\colon\drham\bullet{}{\dera{A}}\rightarrow\grami\bullet\bullet$ and an an element $\lagras\in\negacy{n-3}{2}(\grami\bullet\bullet)$ s.t.\@ $\anchord(\symstra)=(\homdi{}+t\cirdi)(\lagras)$. {\it Two isotropic structures $\lagras_1$, $\lagras_2$ on $\anchord$ are equivalent}, if $\lagras_1-\lagras_2$ is a $(\homdi{}+t\cirdi)$-coboundary. \end{definition} 
\begin{example}\label{ExIso} In the setting of Example \ref{StriFoEx}, suppose we choose an integrable distribution as in Example \ref{SimpleEx}, i.e.\@ it is given by a sub-module $\nesplit{E}\subseteq\tosplit{E}$. If we make sure that $\symfo|_{\nesplit{E}}=0$, we obtain an isotropic structure given by $\lagras=0$.\end{example}
As in the case of symplectic structures $\lagras$ is a series $\underset{j\geq 0}\sum t^j\lagfo{j}$, where $\lagfo{j}$ is a $2+j$-form of cohomological degree $n-3-j$. The first coefficient $\lagfo{0}$ is especially important, and we will denote it by $\lagra$. Switching to the dual $\anchor\colon\lalge\longrightarrow\tanga{\dera{A}}$ of $\anchord$ we observe that, wherever $\anchor^*(\symfo)=0$, we necessarily have $\homdi{}(\lagra)=0$. This holds, in particular, on the (homotopy) kernel $\homke{}$ of $\anchor$. I.e.\@ we obtain a morphism of dg $\dera{A}$-modules
	\begin{equation}\label{IsoMap}\lagra\colon\homke{}\longrightarrow\dumo{\lalge}[n-1]\cong\grami\bullet{1}[n-2].\end{equation}
\begin{definition}\label{DefLagrangian} (\cite{P14}) An isotropic distribution $\anchord\colon(\drham\bullet{}{\dera{A}},\symstra)\rightarrow(\grami\bullet\bullet,\lagras)$ is {\it Lagrangian}, if (\ref{IsoMap}) is a quasi-isomorphism.\end{definition}
To compute a homotopy kernel of the anchor one can choose a surjective representative $\wt{\lalge}\rightarrow\tanga{\dera{A}}$ of $\lalge\rightarrow\tanga{\dera{A}}$ and then take the usual kernel. Let's do this in an example.

\begin{example} Continuing Example \ref{ExIso}, we claim that we obtain a Lagrangian distribution, if the rank of $\nesplit{E}$ is half the rank of $\tosplit{E}$, i.e.\@ if this is a maximally isotropic sub-bundle in the usual sense. Let's compute the homotopy kernel of the anchor map in this case to check that the two definitions agree.

So suppose that $\nesplit{E}$ is maximally isotropic in the usual sense, and let $\posplit{E}$ be a maximally isotropic complement to $\nesplit{E}$. We obtain a surjective replacement of the anchor map, if we define $\widetilde{\lalge}$ to be the dg $\dera{A}$-module generated by 
	\begin{equation*}\tanga{\deri{A}{}{0}},\quad\tanga{\deri{A}{}{0}}\oplus\posplit{E}\oplus\nesplit{E},\quad\posplit{E}\oplus\ubun\end{equation*} 
in degrees $0$, $1$ and $2$ respectively. The differentials on the added summands are the identities plus the differential on $\tanga{{\dera{A}}}$. Then it is easy to compute that the kernel of $\wt{\lalge}\rightarrow\tanga{\dera{A}}$ is generated over $\dera{A}$ by the complex $\tanga{\deri{A}{}{0}}\rightarrow\tosplit{E}/\nesplit{E}$ sitting in degrees $1$ and $2$. On the other hand, the pullback of ${\symfo}$ to $\wt{\lalge}$ is not $0$ anymore. There is an obvious choice for a Lagrangian structure $\lagra$ in this case: use ${\symfo}$ to pair the copy of $\tanga{\deri{A}{}{0}}$ in degree $1$ with $\ubun$, and $\nesplit{E}$ with the copy of $\posplit{E}$ in degree $2$.

Notice that the obvious inclusion $\lalge\hookrightarrow\widetilde{\lalge}$ is a weak equivalence of dg $\dera{A}$-modules, and, moreover, it pulls back $\lagra$ on $\widetilde{\lalge}$ to $0$ on $\lalge$. So $(\lalge,0)\rightarrow(\widetilde{\lalge},\lagra)$ is an equivalence between isotropic distributions. It follows, in particular, that $(\lalge,0)$ is a Lagrangian distribution according to Def.\@ \ref{DefLagrangian}.\end{example}
We finish this section with a description of an isotropic distribution on a derived scheme, written in terms of a derived affine atlas. Recall that having a derived atlas as described in Section \ref{SectionDistributions} we have the notions of integrable distributions (Def.\@ \ref{DefAtlasDistribution}) and shifted symplectic forms (Def.\@ \ref{DefSymAt}) on this atlas. 

\begin{definition} Let $\dersch$ be a derived scheme equipped with a derived affine atlas $\big\{\spec{\deri{A}{j}\bullet}\big\}_{j\in J}$. Suppose we are given an $n$-shifted symplectic form and an integrable distribution on this atlas:
	\begin{equation*}\symstra=\big\{\symstra_{j_0,\ldots,j_k}\;|\;k\geq 0,\,j_0,\ldots,j_k\in J\big\},\end{equation*}
	\begin{equation*}(\grami{\bullet}{\bullet},\anchord,\conne{})=\big\{\gramii{\bullet}{\bullet}{j_0,\ldots,j_k},\anchord_{j_0,\ldots,j_k},\conne{\makebo{j}{k}{s}}\,|\,k\geq 0,\,j_0,\ldots,j_k\in J,0\leq s\leq k\big\},\end{equation*}
\emph{An isotropic structure on this atlas} is given by 
	\begin{equation*}\lagras=\big\{\lagras_{j_0,\ldots,j_k}\in\negacy{n-k-3}{2}\big(\gramii{\bullet}{\bullet}{j_0,\ldots,j_k}\big)\,|\,k\geq 0,\,j_0,\ldots,j_k\in J\big\}\end{equation*}
s.t.\@ $\forall j\in J$ $(\homdi{}+t\cirdi)(\lagras_j)=\anchord_{j}(\symstra_j)$ i.e.\@ $(\anchord_j,\lagras_j)$ is an isotropic distribution on $(\spec{\deri{A}{j}\bullet},\symstra_j)$, and $\forall k\geq 1$
	\begin{equation*}(\homdi{}+t\cirdi)(\lagras_{j_0,\ldots,j_k})=\underset{0\leq s\leq k}\sum(-1)^s\conne{\makebo{j}{k}{s}}(\lagras_{j_0,\ldots,\widehat{j_s},\ldots,j_k})+
	(-1)^{k+1}\anchord_{j_0,\ldots,j_k}(\symstra_{j_0,\ldots,j_k}).
	\footnote{This particular choice of signs is due to us putting $0\in\negacy{n-\bullet}{2}\big(\gramii{\bullet}{\bullet}{j_0,\ldots,j_k}\big)$ 
	always as the last vertex in the $k+1$-simplex of elements of $\negacy{n-\bullet}{2}\big(\gramii{\bullet}{\bullet}{j_0,\ldots,j_k}\big)$.}\end{equation*}
Such distribution will be called \emph{Lagrangian}, if $\forall j\in J$ the isotropic distribution $(\anchord_j,\lagras_j)$ on $(\spec{\deri{A}{j}\bullet},\symstra_j)$ is Lagrangian.\end{definition}

\subsection{Sheaves of symplectic and isotropic structures}

Just as different atlases can define the same derived scheme, so are the notions of integrable distributions, or shifted symplectic forms, or isotropic structures on an atlas contain too much information, that needs to be discarded by an appropriately defined equivalence relation. To begin defining this relation we notice that it is easy to pull back all three kinds of structure. 

Explicitly we can view an indexing set $J$ as vertices of a simplex $\Delta(J)$, and then having a surjective map $\pi\colon J\rightarrow K$ we have a morphism of simplices $\pi\colon\Delta(J)\rightarrow\Delta(K)$. Suppose that we have a natural transformation
	\begin{equation*}\forall\sigma\in\Delta(J)\quad\pi_\sigma\colon\spec{\deri{A}{\sigma}\bullet}\longrightarrow\spec{\deri{B}{\pi(\sigma)}\bullet}\end{equation*}
in the category of affine derived schemes over $\dersch$. Then an integrable distribution, or a symplectic structure, or an isotropic structure on $\big\{\spec{\deri{B}{\pi(\sigma)}\bullet}\big\}$ pulls back to the same kind of structure on $\spec{\deri{A}{\sigma}\bullet}$. It is clear that the result and the initial data describe the same object on $\dersch$.

Here is a simple example. Given an atlas indexed by $J$, we can always divide each chart into smaller derived affine pieces obtaining a finer atlas indexed by $J'$ with the obvious surjection $J'\rightarrow J$. Pulling back, i.e.\@ restricting distributions, symplectic forms or isotropic structures to the smaller pieces we obtain equivalent structures of the same kind. It is immediate to see that declaring shifted symplectic structures on two atlases equivalent, if there is a common subdivision where the two structures are equal, gives us an equivalence relation. Similarly for isotropic distributions.

Subdivision is particularly easy because making a derived affine open chart smaller involves inverting an actual function of degree $0$. Of course there are more possibilities for equivalence obtained not by an actual localization but by a weak equivalence of derived affine charts. The following definition is meant to encode a stronger equivalence relation.

\begin{definition}\label{DefEquivalence} Let $\big\{\spec{\deri{A}{j}{\bullet}}\big\}_{j\in J}$, $\big\{\spec{\deri{B}{k}{\bullet}}\big\}_{k\in K}$ be two atlases on a derived scheme $\dersch$, and let $\big\{\symstra_{j_0,\ldots,j_m}\big\}_{m\geq 0}$, $\big\{\symstra'_{k_0,\ldots,k_n}\big\}_{n\geq 0}$ be $n$-shifted symplectic forms on $\big\{\spec{\deri{A}{j}{\bullet}}\big\}_{j\in J}$, $\big\{\spec{\deri{B}{k}{\bullet}}\big\}_{k\in K}$ respectively. We will say that these \emph{two shifted symplectic structures are equivalent}, if there are sub-divisions $\big\{\spec{\deri{\wh{A}}{j'}{\bullet}}\big\}_{j'\in J'}$, $\big\{\spec{\deri{\wh{B}}{k'}{\bullet}}\big\}_{k'\in K'}$ of the two atlases, both of which are contained in a third atlas $\big\{\spec{\deri{C}{l}{\bullet}}\big\}_{l\in L}$ that is equipped with a $n$-shifted symplectic structure $\big\{\symstra''_{l_0,\ldots,l_r}\big\}_{r\geq 0}$ s.t.\@
	\begin{equation*}\big\{\symstra''_{l_0,\ldots,l_r}\big\}\Big|_{\big\{\spec{\deri{\wh{A}}{j'}{\bullet}}\big\}}=\big\{\symstra_{j'_0,\ldots,j'_m}\big\},\quad
	\big\{\symstra''_{l_0,\ldots,l_r}\big\}\Big|_{\big\{\spec{\deri{\wh{B}}{k'}{\bullet}}\big\}}=\big\{\symstra'_{k'_0,\ldots,k'_n}\big\}.\end{equation*}
Analogously we define \emph{equivalence between integrable distributions on atlases and between isotropic structures on atlases}.\end{definition}
\begin{proposition} The relation defined in Def.\@ \ref{DefEquivalence} is an equivalence relation.\end{proposition}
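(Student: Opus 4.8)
The plan is to verify the three defining properties of an equivalence relation for the relation introduced in Definition \ref{DefEquivalence}, working with the analogous formulations for symplectic structures, integrable distributions, and isotropic structures simultaneously, since the argument is formally identical in each case. The essential mechanism is that \emph{equivalence is witnessed by a common refinement carrying a compatible structure}, so the proof reduces to manipulating refinements of atlases and the structures they carry.

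\emph{Reflexivity} is immediate: given an atlas $\big\{\spec{\deri{A}{j}{\bullet}}\big\}_{j\in J}$ with a symplectic structure $\big\{\symstra_{j_0,\ldots,j_m}\big\}$, take the witnessing third atlas to be the atlas itself (or any common subdivision of it with itself), equipped with the same symplectic structure; the two restriction conditions of Definition \ref{DefEquivalence} hold trivially. \emph{Symmetry} is built into the statement of Definition \ref{DefEquivalence}, since the roles of the two atlases $\big\{\spec{\deri{A}{j}{\bullet}}\big\}$ and $\big\{\spec{\deri{B}{k}{\bullet}}\big\}$ and their structures enter symmetrically: the same third atlas $\big\{\spec{\deri{C}{l}{\bullet}}\big\}_{l\in L}$ and structure $\big\{\symstra''_{l_0,\ldots,l_r}\big\}$ that witness equivalence in one direction witness it in the other.

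The substantive step is \emph{transitivity}. Suppose $\big\{\symstra_{j_0,\ldots,j_m}\big\}$ on $J$ is equivalent to $\big\{\symstra'_{k_0,\ldots,k_n}\big\}$ on $K$, witnessed by a third atlas indexed by $L$ with structure $\big\{\symstra''_{l_0,\ldots,l_r}\big\}$; and suppose $\big\{\symstra'_{k_0,\ldots,k_n}\big\}$ on $K$ is equivalent to $\big\{\symstra'''_{i_0,\ldots,i_p}\big\}$ on $I$, witnessed by a fourth atlas indexed by $M$ with structure $\big\{\symstra''''_{m_0,\ldots,m_q}\big\}$. I would construct a common refinement of the atlases indexed by $L$ and $M$ that restricts to each witnessing structure on the appropriate subatlas. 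Concretely, both $L$ and $M$ contain a subdivision of the $K$-atlas, on which both $\big\{\symstra''\big\}$ and $\big\{\symstra''''\big\}$ restrict to (a common subdivision of) $\big\{\symstra'_{k_0,\ldots,k_n}\big\}$. I would pass to a common subdivision of these two subdivisions of $K$ — using that any open cover of $\unspa$ admits a locally finite refinement, as assumed in Section \ref{SectionDistributions} — and then assemble the charts of the $L$-atlas and the $M$-atlas over this common $K$-subdivision into a single atlas $N$ refining both. On $N$ the two structures $\big\{\symstra''\big\}$ and $\big\{\symstra''''\big\}$ agree after restriction to the shared $K$-part (both equal the same subdivision of $\big\{\symstra'\big\}$), so they glue to a single $n$-shifted symplectic structure on $N$ that restricts correctly to the $J$-subatlas and the $I$-subatlas. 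This exhibits equivalence of $\big\{\symstra\big\}$ and $\big\{\symstra'''\big\}$.

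The main obstacle I anticipate is making the gluing in the transitivity step precise: one must check that the two witnessing structures, which a priori agree only \emph{up to the identifications provided by their respective restrictions to subdivisions of $K$}, actually patch into a single coherent structure on the combined atlas $N$, including the higher cosimplicial data $\symstra''_{l_0,\ldots,l_r}$ indexed by tuples of charts drawn from both sides. Here I would lean on the fact that subdivision is effected by inverting degree-$0$ functions (as noted in the paragraph before Definition \ref{DefEquivalence}), so restriction to a common subdivision is a strict equality rather than merely a weak equivalence, and the cocycle conditions of Definition \ref{DefSymAt} (and their analogues for distributions and isotropic structures) are preserved verbatim under such restriction. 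The verification that all mixed tuples satisfy the required $(\homdi{}+t\dedi)$-identities then follows by restricting the already-valid identities from each witnessing atlas to the overlaps, and the same reasoning applies mutatis mutandis to integrable distributions and to isotropic structures.
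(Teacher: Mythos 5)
Your overall architecture is the same as the paper's (reflexivity and symmetry are dismissed as trivial; transitivity is handled by assembling the two witnessing atlases over a common refinement on which both contain subdivisions of the middle atlas), but there is a genuine gap exactly at the point you flag as "the main obstacle," and your proposed resolution does not close it. A shifted symplectic structure on an atlas is not a sheaf-like datum determined by its restrictions to covering sub-atlases: on the combined atlas $N$ (the paper's $J=J_{12}\sqcup J_{23}$) there are \emph{mixed} tuples $\{j_0,\ldots,j_k\}$ with some indices from the $L$-side and some from the $M$-side, and for these tuples \emph{neither} witnessing structure provides any datum $\symstra_{j_0,\ldots,j_k}$ at all. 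Your proposed verification --- "restricting the already-valid identities from each witnessing atlas to the overlaps" --- therefore has nothing to restrict: on mixed tuples the forms themselves must be \emph{constructed}, not merely checked, and agreement of the two structures on the shared $K$-part is insufficient because the coherence condition of Def.\@ \ref{DefSymAt} for a mixed tuple couples data from both sides. The observation that subdivision is a strict localization is true (the paper uses it too), but it only guarantees that the restricted data are strictly equal where both are defined; it does not produce the missing homotopies between a chart of $L$ and a chart of $M$.

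Constructing that missing data is in fact the bulk of the paper's proof. For a mixed tuple with $j_0\in J_{12}$ and $j_1,\ldots,j_k\in J_{23}$, the paper sets
\begin{equation*}
\symstra_{j_0,\ldots,j_k}:=\symstra_{j_0,\pi_{23}(j_1),\ldots,\pi_{23}(j_k)}+\sum\epsilon_{j'_0,\ldots,j'_k}\,\symstra_{j'_0,\ldots,j'_k},
\end{equation*}
where the sum runs over the $k$-simplices of the prism $\{0,1\}\times\{j_1,\ldots,j_k\}$ with orientation signs $\epsilon_{j'_0,\ldots,j'_k}$: one composes the simplex joining $j_0$ to the projected charts $\pi_{23}(j_1),\ldots,\pi_{23}(j_k)$ (available in the $L$-side structure, since $\pi_{23}$ lands in $J_2\subseteq J_{12}$) with the prism of simplices joining the projected charts to $j_1,\ldots,j_k$ (available in the $M$-side structure, since $J_2\subseteq J_{23}$), and it is precisely this prism combinatorics that makes the $(\homdi{}+t\dedi)$-identities hold on mixed tuples. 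Already for $k=1$ one needs $\symstra_{j_0,j_1}:=\symstra_{j_0,\pi_{23}(j_1)}+\symstra_{\pi_{23}(j_1),j_1}$, a composition of homotopies, not a restriction. A second, smaller omission: for integrable distributions your "mutatis mutandis" also fails as stated, because extending a distribution to mixed tuples cannot be done by pull-back alone; the paper forms a push-out of graded mixed algebras under the distribution on the common $K$-chart to define the distribution on mixed intersections.
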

\begin{proof} The only non-obvious property is transitivity. Let  $\big\{\symstra^1_{j^1_0,\ldots,j^1_{m_1}}\big\}_{m_1\geq 0}$, $\big\{\symstra^2_{j^2_0,\ldots,j^2_{m_2}}\big\}_{m_2\geq 0}$, $\big\{\symstra^3_{j^3_0,\ldots,j^3_{m_3}}\big\}_{m_3\geq 0}$ be $n$-shifted symplectic forms on atlases $\big\{\spec{\deri{A}{j^1}{\bullet}}\big\}_{j^1\in J_1}$, $\big\{\spec{\deri{B}{j^2}{\bullet}}\big\}_{j^2\in J_2}$, $\big\{\spec{\deri{C}{j^3}{\bullet}}\big\}_{j^3\in J_3}$. Suppose that the first is equivalent to the second, and the second is equivalent to the third according to Def.\@ \ref{DefEquivalence}, i.e.\@ we have other two atlases $\big\{\spec{\deri{D}{j^{12}}{\bullet}}\big\}_{j^{12}\in J_{12}}$, $\big\{\spec{\deri{E}{j^{23}}{\bullet}}\big\}_{j^{23}\in J_{23}}$ containing sub-divisions of the first three, together with $n$-shifted symplectic structures $\big\{\symstra^{12}_{j^{12}_0,\ldots,j^{12}_{m_{12}}}\big\}_{m_{12}\geq 0}$, $\big\{\symstra^{23}_{j^{23}_0,\ldots,j^{23}_{m_{23}}}\big\}_{m_{23}\geq 0}$ with the prescribed restrictions.

The $5$ atlases give $5$ open covers of the space of classical points $\unspa$. Choosing a common refinement of these $5$ open covers and localizing the corresponding derived affine schemes, we can assume that $J_2\subseteq J_{12}$, $J_2\subseteq J_{23}$ and we have surjective maps $\pi_{12}\colon J_{12}\rightarrow J_2$, $\pi_{23}\colon J_{23}\rightarrow J_2$ s.t.\@ $\forall j^2\in J_2$ $\spec{\deri{B}{j^2}\bullet}$ is weakly equivalent over $\dersch$ to any chart indexed by an element in the fibers of $\pi_{12}$, $\pi_{23}$ over $j^2$. 

We would like to construct another atlas $\big\{\spec{\deri{F}{j}\bullet}\big\}$ s.t.\@ it contains subdivisions of $\big\{\spec{\deri{D}{j^{12}}{\bullet}}\big\}_{j^{12}\in J_{12}}$, $\big\{\spec{\deri{E}{j^{23}}{\bullet}}\big\}_{j^{23}\in J_{23}}$, and equip $\big\{\spec{\deri{F}{j}\bullet}\big\}$ with a shifted symplectic structure that restricts to $\big\{\symstra^{12}_{j^{12}_0,\ldots,j^{12}_{m_{12}}}\big\}_{m_{12}\geq 0}$, $\big\{\symstra^{23}_{j^{23}_0,\ldots,j^{23}_{m_{23}}}\big\}_{m_{23}\geq 0}$. The indexing set for this sixth atlas will be
	\begin{equation}\label{Indexing}J:=J_{12}\sqcup J_{23}.\end{equation}
We need to specify intersections of charts that belong to different parts in the decomposition (\ref{Indexing}). If we view $J_2$ as a subset of $J_{12}$, we have $\pi_{23}\colon J_{23}\rightarrow J_{12}$, and vice versa. For $k\geq 1$ let $\{j_0,\ldots,j_k\}\subseteq J$ s.t.\@ $\{j_0,\ldots,j_{k'}\}\subseteq J_{12}$ and $\{j_{k'+1},\ldots,j_k\}\subseteq J_{23}$ with $k'\neq k$. We define
	\begin{equation*}\spec{\deri{F}{j_0,\ldots,j_k}\bullet}:=\underset{0\leq s\leq s'}\bigcap\spec{\deri{D}{j_s}\bullet}\cap\underset{k'+1\leq s\leq k}\bigcap\spec{\deri{E}{j_s}\bullet}
	\cap\underset{k'+1\leq s\leq k}\bigcap\spec{\deri{D}{\pi_{23}(j_s)}\bullet}.\end{equation*}
This gives us an atlas indexed by $J$. Now we need to define a shifted symplectic structure on this atlas. On intersections of charts indexed by $J_{12}$ or $J_{23}$ separately we already have it defined. Let $\{j_0,\ldots,j_k\}\subseteq J$, s.t.\@ $j_0\in J_{12}$ and $\{j_1,\ldots, j_k\}\subseteq J_{23}$. Consider the simplicial complex $\{0,1\}\times\{j_1,\ldots, j_k\}$. It is a prism of dimension $k$ and it is naturally decomposed into a union of $k$-simplices. We think of the top of this prism as being the simplex $\{\pi_{23}(j_1),\ldots,\pi_{23}(j_k)\}\subseteq J_{23}$ and the bottom as being $\{j_1,\ldots,j_k\}$.

 The orientations on $\{0,1\}\times\{j_1\}$ and $\{1\}\times\{j_1,\ldots, j_k\}$ given by ordering defines a orientation of the prism, and we assign a sign $\epsilon_{j'_0,\ldots, j'_k}$ to each $k$-simplex $\{j'_0,\ldots,j'_k\}\subset\{0,1\}\times\{j_1,\ldots,j_k\}$ according to whether this orientation agrees with the ordering or not. Then, suppressing the notation for pullbacks of differential forms we define
	\begin{equation*}\symstra_{j_0,\ldots,j_k}:=\symstra_{j_0,\pi_{23}(j_1),\ldots,\pi_{23}(j_k)}+\sum\epsilon_{j'_0,\ldots,j'_k}\symstra_{j'_0,\ldots,j'_k},\end{equation*}
where the sum runs over all $k$-simplices in the prism. If $\{j_0,\ldots,j_k\}\subseteq J$ is such that $\{j_0,\ldots,j_{k'}\}\subseteq J_{12}$ with $k'\geq 1$ we define
	\begin{equation*}\symstra_{j_0,\ldots,j_k}:=\symstra_{j_0,\ldots,j_{k'},\pi_{23}(j_{k'+1}),\ldots,\pi_{23}(j_k)}.\end{equation*}
It is tedious but straightforward to check that this gives us a shifted symplectic structure on the atlas indexed by $J$.

To extend distributions from $J_{12}$- and $J_{23}$-atlases to the $J$-atlas it is not enough just to pull-back, one needs also to push-forward. We have 
	\begin{equation}\label{left}\rahm{\deri{D}{j_0,\ldots,j_{k'},\pi_{23}(j_{k'+1}),\ldots,\pi_{23}(j_k)}\bullet}\bullet\longrightarrow\gramii{\bullet}{\bullet}{j_0,\ldots,j_{k'},\pi_{23}(j_{k'+1}),\ldots,\pi_{23}(j_k)},\end{equation}
	\begin{equation}\label{right}\rahm{\deri{E}{\pi_{23}(j_{k'+1}),\ldots,\pi_{23}(j_k),j_{k'+1},\ldots,j'_k}\bullet}\bullet\longrightarrow\gramii{\bullet}{\bullet}{\pi_{23}(j_{k'+1}),\ldots,\pi_{23}(j_k),j_{k'+1},\ldots,j'_k}.\end{equation}
Suppressing the notation for pullbacks of bundles and forms we define 
	\begin{equation*}\rahm{\deri{F}{j_0,\ldots,j_k}\bullet}{\bullet}\longrightarrow\gramii\bullet\bullet{j_0,\ldots,j_k}\end{equation*}
as the push-out of (\ref{left}) and (\ref{right}) under 
	\begin{equation*}\rahm{\deri{D}{\pi_{23}(j_{k'+1}),\ldots,\pi_{23}(j_k)}\bullet}\bullet\longrightarrow\gramii{\bullet}{\bullet}{\pi_{23}(j_{k'+1}),\ldots,\pi_{23}(j_k)}.\end{equation*}
Once the integrable distributions are extended to the whole $J$-atlas, an extension of the isotropic structures proceeds analogously to the extension of symplectic forms.\end{proof}%

\begin{corollary} Let $\dersch$ be a derived scheme and let $\unspa$ be the underlying topological space. For any open subset $\opes\subseteq\unspa$ let $\symsh{}(\opes)$, $\ish{}(\opes)$ be the sets of equivalence classes of atlases on $\opes$ equipped with shifted symplectic forms and isotropic structures respectively. Then $\symsh{}$, $\ish{}$ are sheaves on $\unspa$. 

Similar statement holds, if we replace $\dersch$ with a derived manifold $\derm$ whose underlying space $\unspam$ of classical points carries the $\cinfty$-topology.\end{corollary}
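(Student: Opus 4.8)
The plan is to verify that $\symsh{}$ and $\ish{}$ satisfy the two sheaf axioms, with the gluing construction from the proof of the Proposition preceding this Corollary (that the relation of Definition~\ref{DefEquivalence} is an equivalence relation) serving as the main engine. First I would check that both are genuinely presheaves. Given open $\opes'\subseteq\opes$, restriction sends an atlas-with-structure on $\opes$ to its localization over $\opes'$: each chart $\spec{\deri{A}{j}{\bullet}}$ meeting $\opes'$ is localized at suitable degree-$0$ functions so as to cover the classical points of $\opes'$, and the forms $\symstra_{j_0,\ldots,j_k}$ (respectively the isotropic data $\lagras_{j_0,\ldots,j_k}$) pull back to these localized charts. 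This is well defined on equivalence classes: if $\symstra\simeq\symstra'$ is witnessed by a common refinement $\big\{\spec{\deri{C}{l}{\bullet}}\big\}$ carrying a structure $\symstra''$, then restricting that witness to $\opes'$ witnesses $\symstra|_{\opes'}\simeq\symstra'|_{\opes'}$; and functoriality in $\opes'\subseteq\opes$ holds since iterated localization is again a localization.

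For the sheaf axioms I would fix a cover $\opes=\bigcup_i\opes_i$ and, invoking the standing assumption that every cover admits a locally finite refinement, assume $\{\opes_i\}$ locally finite. Both axioms reduce to extending the two-atlas gluing of the preceding Proposition to this family. For gluing, let $s_i\in\symsh{}(\opes_i)$ agree on overlaps; choose for each $i$ a representative atlas $\big\{\spec{\deri{A}{j}{\bullet}}\big\}_{j\in J_i}$ with symplectic form $\symstra^i$, so that on each $\opes_i\cap\opes_{i'}$ the compatibility $s_i|_{\opes_i\cap\opes_{i'}}=s_{i'}|_{\opes_i\cap\opes_{i'}}$ is witnessed by a common refinement carrying a structure restricting to both $\symstra^i$ and $\symstra^{i'}$. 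I would then assemble a global atlas on $\opes$ indexed by the disjoint union $\bigsqcup_i J_i$, defining the intersection of charts drawn from different members of the cover by the prism construction of the preceding proof and building the global form from the $\symstra^i$ together with the overlap witnesses, with signs dictated by the orientations of the prisms; restricting the result to each $\opes_i$ returns $s_i$. For separation, if two global classes $[\symstra]$, $[\symstra']$ agree on every $\opes_i$, the same prism construction glues the local common-refinement atlases and their witnessing structures into a single atlas over $\opes$ that contains subdivisions of representatives of both classes and carries a structure restricting to each, so $[\symstra]=[\symstra']$ over $\opes$.

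The main obstacle is purely combinatorial: one must define the intersection of charts coming from several distinct members $\opes_{i_0},\ldots,\opes_{i_k}$ of the cover --- iterated prisms indexed by the simplices of the nerve --- and assign signs so that the glued data $\{\symstra_{j_0,\ldots,j_k}\}$ satisfy the cocycle identity of Definition~\ref{DefSymAt}, and so that the comparison maps $\{\conne{}\}$ and the isotropic identities hold. Local finiteness is what makes this tractable: each classical point lies in only finitely many $\opes_i$, so every chart has only finitely many intersection partners and each prism is finite-dimensional, reducing the verification to a finite, if tedious, extension of the transitivity bookkeeping already carried out. The isotropic case $\ish{}$ requires no new idea: the underlying integrable distributions are glued by the push-outs described in the preceding proof, and the series $\lagras_{j_0,\ldots,j_k}$ are then assembled exactly as the $\symstra_{j_0,\ldots,j_k}$ were, since $\negacy{\bullet}{2}(-)$ is functorial in $\homdi{}$-quasi-isomorphisms.

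Finally, the passage to a derived manifold $\derm$ over $\mathbb R$ is verbatim. Localization of $\cinfty$-rings, the negative cyclic complexes, the de Rham forms and the entire prism gluing are insensitive to whether one works over $\mathbb C$ with the Zariski topology or with $\cinfty$-rings with the analytic topology on $\unspam$; the only change is that Kähler differentials are formed after taking Fréchet closures, which does not interact with the combinatorial gluing. Hence the same argument shows that $\symsh{}$ and $\ish{}$ are sheaves on $\unspam$.
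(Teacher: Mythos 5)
Your overall strategy --- restriction by localization, well-definedness on equivalence classes, and both sheaf axioms via the atlas-merging (prism) construction of the preceding Proposition, under the standing locally-finite-cover assumption, with the $\cinfty$ case handled verbatim --- is exactly the route the paper intends, since it states this Corollary with no separate argument, as an immediate consequence of the Proposition. The presheaf part of your proposal is fine, as is the observation that any glued structure, once it exists, restricts correctly to each $\opes_i$ (the pure $A_i$-simplices are untouched by the merging, so the restricted atlas contains $(A_i,\symstra^i)$ as a sub-atlas on the nose).

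The gap is in the claim that the higher-overlap coherence is ``a finite, if tedious, extension of the transitivity bookkeeping.'' The transitivity proof works for a structural reason that is absent in the sheaf-axiom setting: there the two atlases being merged share a common middle atlas carrying the \emph{same} structure on the nose, and each of the two witnessed sub-atlases lies entirely inside one half of the merge, so no prism-corrected cross-term is ever required to restrict to prescribed data. In your gluing step the pairwise witnesses $W_{ii'}$, $W_{i'i''}$, $W_{ii''}$ are mutually unrelated over a triple overlap: the $1$-simplex datum $\symstra_{ab}$ between a chart $a$ of $A_i$ and $b$ of $A_{i'}$ must be built from $W_{ii'}$, and the alternating sum $\symstra_{bc}-\symstra_{ac}+\symstra_{ab}$ is $(\homdi{}+t\dedi)$-closed but its exactness --- i.e.\@ the existence of the $2$-simplex filler $\symstra_{abc}$ demanded by Definition~\ref{DefSymAt} --- is an obstruction-theoretic statement, a nonabelian \v{C}ech-type obstruction valued in the cohomology of the weight-$2$ negative cyclic complexes of triple intersections, which has no reason to vanish for degree reasons. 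Local finiteness makes the data finite but does nothing to kill this obstruction; the two competing ways of relating $A_i$ to $A_{i''}$ over $\opes_{ii''}$ (directly via $W_{ii''}$, or through $A_{i'}$ on the triple overlap) must also be patched, which is the same problem. Separation suffers from the analogous defect: the sub-atlas being witnessed (the global $A'$, say) now spans several local witnesses $C_i$, so its own cross-intersections in the merged atlas receive prism-corrected values rather than $\symstra'$ on the nose. A complete proof must show that the witnesses can be re-chosen coherently --- e.g.\@ an induction over the nerve of the cover, using the freedom to subdivide and to modify witnesses by $(\homdi{}+t\dedi)$-coboundaries to trivialize the obstruction cocycle --- and this is a missing idea, not bookkeeping; as written, the key step is asserted rather than proved.
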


We would like to compare sets of sections of these sheaves with the sets of symplectic or isotropic structures obtained by the standard techniques of stacks. According to \cite{PTVV13} Def.\@ 1.12 \emph{the space of $n$-shifted symplectic forms on $\dersch$} is the simplicial set $\mapp(\dersch,\forms{n})$ where $\forms{n}$ is the stack of closed $2$-forms of degree $n$ and the mapping space is computed in the category of derived stacks (relative to the \'etale topology). In particular for an affine derived scheme $\spec{\dera{A}}$ we get Def.\@ \ref{DefSymAffine}.

\begin{proposition} Let $\dersch$ be a derived scheme, and let $\symsh{n}$ be the sheaf of equivalence classes of $n$-shifted symplectic forms on $\unspa$ (Def.\@ \ref{DefEquivalence}). Then we have a natural bijection
	\begin{equation}\label{SheafToStack}\symsh{n}(\unspa)\overset\cong\longrightarrow\pi_0(\mapp(\dersch,\forms{n})).\end{equation}\end{proposition}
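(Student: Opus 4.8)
The plan is to identify both sides of (\ref{SheafToStack}) with the degree-$0$ hypercohomology of the sheafified weight-$2$ negative cyclic complex, and to read the bijection off that common description. The essential inputs are that $\forms{n}$ is a derived stack, hence satisfies \'etale (hyper)descent, and that — following \cite{PTVV13} — over an affine $\spec{\dera{A}}$ the mapping space $\mapp(\spec{\dera{A}},\forms{n})$ is, via Dold--Kan, the simplicial abelian group associated with the (appropriately shifted and truncated) negative cyclic complex $\negacy{\bullet}{2}\big(\drham\bullet{}{\dera{A}}\big)$ with total differential $\homdi{}+t\dedi$. Under this correspondence a $0$-simplex up to homotopy is exactly a homotopically closed $2$-form of degree $n$ (Def.~\ref{DefSymAffine}), and a $1$-simplex between two of them is exactly a $(\homdi{}+t\dedi)$-coboundary, so $\pi_0$ of the affine mapping space is the $H^0$ of this complex.

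First I would fix an affine atlas $\big\{\spec{\deri{A}{j}{\bullet}}\big\}_{j\in J}$ representing $\dersch$ and invoke descent to write
\begin{equation*}\mapp(\dersch,\forms{n})\;\simeq\;\holim_{\Delta}\,\mapp\big(\spec{\deri{A}{j_0,\ldots,j_k}\bullet},\forms{n}\big),\end{equation*}
the homotopy limit being over the cosimplicial diagram indexed by the \v{C}ech nerve of the atlas (intersections taken over $\dersch$, as in Section~\ref{SectionDistributions}). Since each term is Dold--Kan dual to a complex, this homotopy limit is computed by the totalization of the \v{C}ech cosimplicial object of negative cyclic complexes, with total differential $D=(\homdi{}+t\dedi)\pm\partial_{\mathrm C}$, where $\partial_{\mathrm C}$ is the alternating \v{C}ech differential; its $\pi_0$ is $H^0(\mathrm{Tot})$, i.e.\@ the \v{C}ech hypercohomology of $\negacy{\bullet}{2}\big(\drham\bullet{}{\mathcal O}\big)$ (with the degree shift of Def.~\ref{DefSymAt}) relative to this atlas.

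Next I would match the combinatorics of Def.~\ref{DefSymAt} and Def.~\ref{DefEquivalence} to this total complex. A datum $\big\{\symstra_{j_0,\ldots,j_k}\big\}$ as in Def.~\ref{DefSymAt} is precisely a degree-$0$ element of $\mathrm{Tot}$, and the two conditions imposed there — each $\symstra_{j}$ a cocycle, and $(\homdi{}+t\dedi)(\symstra_{j_0,\ldots,j_k})=\sum_{s}(-1)^{s}\symstra_{j_0,\ldots,\widehat{j_s},\ldots,j_k}$ — are exactly the single equation $D\symstra=0$. Every class of $H^0(\mathrm{Tot})=\pi_0(\mapp(\dersch,\forms{n}))$ has such a strict cocycle representative, which gives surjectivity and defines the map (\ref{SheafToStack}). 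Two data are equivalent in the sense of Def.~\ref{DefEquivalence} — agreeing after restriction to a common refinement — if and only if they determine the same class, because refinement of atlases induces the usual quasi-isomorphism of \v{C}ech complexes and the descent identifications of $\pi_0(\mapp(\dersch,\forms{n}))$ with the various $H^0(\mathrm{Tot})$ are compatible under refinement. This shows simultaneously that $\symsh{n}(\unspa)$ — the colimit of these $H^0$'s over atlases — is canonically the intrinsic $\pi_0(\mapp(\dersch,\forms{n}))$, that (\ref{SheafToStack}) is well defined on equivalence classes, and that it is injective. Finally, non-degeneracy is a condition on the leading class $\symfo$ alone (that $\symfo\colon\tanga{\dera{A}}\to\suspensi{\drham{}{}{\dera{A}}}{n}$ be a quasi-isomorphism, Def.~\ref{DefSymAffine}); it is invariant under the identification, so the symplectic atlas data on the left correspond to the non-degenerate classes on the right compatibly.

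The main obstacle I anticipate is justifying the passage from the homotopy limit to the strict total complex, i.e.\@ that $\pi_0(\holim)=H^0(\mathrm{Tot})$ with no $\lim^{1}$ correction, together with the bookkeeping of signs and shifts. In the general setting of spaces one would have to track higher coherences and potential $\lim^{1}$ terms, but here the linear (chain-complex) nature of $\forms{n}$ is decisive: homotopy-coherent descent for a sheaf of complexes is computed by the total complex of the \v{C}ech double complex, so strict cocycles modulo coboundaries already compute $\pi_0$, and — since our complexes are suitably bounded — the spectral sequence of the totalization degenerates enough in degree $0$ to avoid $\lim^{1}$. The residual work is checking that the explicit shifts $\negacy{n-2-k}{2}$ and the sign convention fixed in the footnote to Def.~\ref{DefSymAt} align with the standard normalization of $\partial_{\mathrm C}$, and that local cofibrancy of the well presented charts (already used for the pull-backs in Section~\ref{SectionDistributions}) permits the strict \v{C}ech model on each refinement.
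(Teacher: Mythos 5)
Your reduction of $\pi_0(\mapp(\dersch,\forms{n}))$ to $H^0$ of a \v{C}ech total complex is sound, and it is the linear shadow of the paper's own first step (descent over the atlas, with Def.~\ref{DefSymAt} supplying the strict degree-$0$ cocycles); it handles the construction of the map and surjectivity correctly. The genuine gap is in injectivity, and it traces back to your gloss of Def.~\ref{DefEquivalence} as ``agreeing after restriction to a common refinement''. That is not the paper's equivalence relation: two structures are equivalent if subdivisions of both atlases sit inside a \emph{third} atlas carrying a \emph{single} symplectic structure whose restrictions are the two given ones; the cross-intersections between the two subatlases inside the third atlas carry extra data, and that data is where the homotopy lives. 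Under your reading the statement would actually fail: two cohomologous cocycles on a fixed atlas do not in general become \emph{equal} on any refinement (the restriction of a nonzero coboundary is still a nonzero coboundary; compare ordinary \v{C}ech cohomology, where refinement maps between good covers are already isomorphisms). So refinement-invariance of \v{C}ech hypercohomology, which is all you invoke, yields only the easy direction (equivalent $\Rightarrow$ same class), not injectivity (same class $\Rightarrow$ equivalent). For the same reason, your identification of $\symsh{n}(\unspa)$ with the colimit of the groups $H^0$ over atlases presupposes exactly the equivalence of the two relations that the proposition is asserting.

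What is missing is the step converting a coboundary (homotopy) into the third atlas demanded by Def.~\ref{DefEquivalence}, and this is precisely the paper's injectivity argument: given two cocycles in the same class, the connecting homotopy --- a system of $1$-simplices on pairwise ``intersections'', $2$-simplices on triple ones, and so on --- is nothing but the structure of a shifted symplectic form on the disjoint union of the atlas with itself. Since atlases in this paper are indexed families with repetitions allowed, this disjoint union is again an atlas, it contains both originals, and the restrictions of the glued structure to the two copies are the two given cocycles; that is Def.~\ref{DefEquivalence} verbatim. If you insert this one construction, your hypercohomology framework closes up and in fact makes the paper's somewhat informal identification $\pi_0(\holim)\cong H^0(\mathrm{Tot})$ more precise than the original; without it, the heart of the proposition is unproved.
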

\begin{proof} Let $\{\spec{\deri{A}{j}\bullet}\}_{j\in J}$ be a derived affine atlas on $\dersch$. In particular this means that
	\begin{equation*}\dersch\simeq\underset{k\geq 0,\{j_0,\ldots,j_k\}\subseteq J}\hocolim\bigg(\underset{0\leq s\leq k}\bigcap\spec{\deri{A}{j_s}\bullet}\bigg),\end{equation*}
and correspondingly 
	\begin{equation}\label{MapForms}\mapp(\dersch,\forms{n})\simeq\underset{k\geq 0,\{j_0,\ldots,j_k\}\subseteq J}\holim\mapp(\spec{\deri{A}{j_0,\ldots,j_k}\bullet},\forms{n}).\end{equation}
Def.\@ \ref{DefSymAt} gives an explicit representation of a $0$-simplex in this homotopy limit of simplicial sets. Pulling back $0$-simplices in (\ref{MapForms}) over inclusion of atlases and sub-division of atlases clearly produces $0$-simplices in the same connected component. Hence we get the map in (\ref{SheafToStack}). Since every shifted symplectic form on $\dersch$ can be realized (up to homotopy) on any given atlas, it is clear that (\ref{SheafToStack}) is surjective. 

Two simplices in the same connected component of (\ref{MapForms}) are connected by a system of $1$-simplices, that are connected by $2$-simplices and so forth, with $k$-simplices being defined on $k$-fold intersections. Altogether this is nothing else but the structure of a shifted symplectic form on the disjoint union of $\{\spec{\deri{A}{j}\bullet}\}_{j\in J}$ with itself. So (\ref{SheafToStack}) is injective as well.\end{proof}%

\section{Strictification of Lagrangian distributions}\label{SectionStrict}

In this section we look at various levels of strictification of the notions considered above. Prop.\@ \ref{LocalD} is an example of strictification of symplectic forms. Our objective in this section is to prove a similar statement regarding Lagrangian distributions. The strict version of such distributions is given by a sub-complex of the tangent complex, having half the rank, s.t.\@ the symplectic form vanishes on the sub-complex. We will need to consider a somewhat relaxed variant, that we will call \emph{semi-strict}, where we require vanishing of the symplectic form only over the classical points. We start by introducing conditions that will allow us to prove the strictification result.

\bigskip

A general integrable distribution on $\spec{\dera{A}}$ is defined as a morphism $\rahm{\dera{A}}\bullet\longrightarrow\grami\bullet\bullet$. There are some conditions that this morphism should satisfy, but none require it to be surjective or put any vanishing restrictions on cohomology. We would like to introduce conditions of this kind.

\begin{definition}\label{DefPurelyDerived} Let $\dera{A}$ be a well presented dg algebra, an integrable distribution
	\begin{equation*}\anchord\colon\rahm{\dera{A}}\bullet\longrightarrow\grami\bullet\bullet,\end{equation*} 
will be called \emph{strict}, if $\anchord$ is surjective. A distribution would be called \emph{purely derived}, if $\homology{\geq 0}(\grami{\bullet}{1})=0$. An integrable distribution that is purely derived and simultaneously a derived foliation will be called \emph{purely derived foliation}.\end{definition}
\begin{remark} Since we have required that, forgetting $\cirdi$, an integrable distribution $\grami\bullet\bullet$ is a free dg $\dera{A}$-algebra generated by $\grami\bullet{1}$, it is immediately clear that the kernel of a surjective $\anchord\colon\drham\bullet{}{\dera{A}}\rightarrow\grami\bullet\bullet$ is a graded mixed ideal having generators in $\drham{1}{}{\dera{A}}$.\end{remark}
In terms of Lie--Rinehart algebras a strict distribution is just a dg sub-module of the tangent complex, that is closed with respect to the Lie bracket. In Def.\@ \ref{DefDerivedFoliation} we have singled out those distributions -- derived foliations -- that locally always have a strict presentation. Now we would like to see what simplification we get from the purely derived condition. Since we would like to incorporate isotropic structures we need first to define what we mean by an equivalence between isotropic distributions.

\begin{definition} We will say that two isotropic distributions
	\begin{equation*}\anchord_1\colon(\rahm{\dera{A}}\bullet,\symstra)\rightarrow(\gramii\bullet\bullet{1},\lagras_1),\quad
	\anchord_2\colon(\rahm{\dera{A}}\bullet,\symstra)\rightarrow(\gramii\bullet\bullet{2},\lagras_2)\end{equation*}
are \emph{equivalent}, if there are $\anchord_3\colon(\rahm{\dera{A}}\bullet,\symstra)\rightarrow(\gramii\bullet\bullet{3},\lagras_3)$, $\dima{1},\dima{2}$ defining an equivalence $\gramii\bullet\bullet{1}\sim\gramii\bullet\bullet{2}$ according to Def.\@ \ref{DefEquivalenceDist}, s.t.\@ $\dima{2}(\lagras_3)\sim\lagras_2$, $\dima{1}(\lagras_3)\sim\lagras_1$ according to Def.\@ \ref{DefIsoDist}.\end{definition}
\begin{proposition}\label{StriPro} Let $\dera{A}$ be a well presented dg algebra and let $\symstra$ be a homotopically closed $2$-form on $\spec{\dera{A}}$. Let $\anchor\colon(\lalge,\lagras)\longrightarrow(\tanga{{\dera{A}}},\symstra)$ be a purely derived strict distribution with an isotropic structure, s.t.\@ 
	\begin{equation}\label{SecondHomologyMap}\homology{\geq 2}(\lalge)\overset\cong\longrightarrow\homology{\geq 2}(\tanga{{\dera{A}}}).\end{equation} 
Then there is an equivalent strict distribution with an isotropic structure $\anchor'\colon(\lmin,\lagras')\rightarrow(\tanga{\dera{A}},\symstra)$, s.t.\@ $\lmin$ is a perfect dg $\dera{A}$-module and $\forall\pt\colon\dera{A}\rightarrow\mathbb C$ \begin{enumerate}[label=(\roman*)]
\item $\lmin^{\leq 0}|_\pt=0$,
\item $\lmin^{\geq 2}|_\pt\overset{\cong}\longrightarrow\tangde{{\dera{A}}}{\geq 2}|_\pt$,
\item $\lmin^1|_\pt\longrightarrow\tangde{{\dera{A}}}{1}|_{\pt}$ is injective.\end{enumerate}\end{proposition}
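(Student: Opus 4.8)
The plan is to realise $\lmin$ as a strict representative of the type in Example \ref{SimpleEx} (with cut-off $m=1$) and to transport the isotropic datum along the connecting weak equivalences. Throughout I work locally on $\spec{\deri{A}{}{0}}$, refining the cover whenever a bundle needs to be trivialised; since distributions, symplectic forms and isotropic structures are all defined only up to localisation, a construction carried out coherently over such a cover glues to a global $\lmin$. By strictness $\anchor\colon\lalge\hookrightarrow\tanga{\dera{A}}$ is a dg submodule, perfect over $\dera{A}$, and I fix generating bundles $\{\lalge^k\}$ as in the Notation paragraph. First I would \emph{fix the top}: because $\tanga{\dera{A}}$ is free over $\undera{A}$ on $\tanga{\deri{A}{}{0}}$ and the bundles $\bundle{E}{k}{}$, $k\geq 1$, and because $\homology{\geq 2}(\lalge)\overset\cong\longrightarrow\homology{\geq 2}(\tanga{\dera{A}})$, I can adjust $\lalge$ by adding and removing contractible summands $(\mathrm{id}\colon B\to B)$ in adjacent degrees until its generating bundles in degrees $\geq 2$ are exactly the $\bundle{E}{k}{}$, $k\geq 2$, with $\anchor$ the identity there. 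The $\homology{\geq 2}$-isomorphism is precisely what guarantees that no genuine cohomology obstructs this matching; this yields (ii).

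Next I would \emph{kill the non-positive degrees}. The purely derived hypothesis forces $\lalge$ to have no cohomology at any classical point in non-positive degrees, so modulo the maximal ideal of each $\pt$ every generator in degree $\leq 0$ is cancelled against a neighbouring generator by an isomorphism coming from the differential. Trivialising the relevant bundles and performing Gaussian elimination one degree at a time, I produce a weakly equivalent perfect complex with generating bundles only in degrees $\geq 1$; the cancelling homotopies are chosen inside $\tanga{\dera{A}}$ so that the result $\lmin$ is again a dg submodule closed under the bracket, i.e.\ still strict. This is (i). With the top pinned and the bottom removed, the residual map $\lmin^1\to\bundle{E}{1}{}=\tangde{\dera{A}}{1}$ can have no fibrewise kernel: a kernel would reintroduce non-positive cohomology or break the $\homology{\geq 2}$-isomorphism established above, and so degree $1$ stabilises to a genuine sub-bundle, exactly as in Examples \ref{SimpleEx} and \ref{ExIso}. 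This gives the injectivity (iii).

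Finally I would \emph{transport the isotropic structure}. The moves of the previous two steps assemble into weak equivalences of distributions connecting $\lalge$ and $\lmin$ under $\drham\bullet{}{\dera{A}}$; since the weighted negative cyclic functor $\negacy{\bullet}{2}$ preserves quasi-isomorphisms (\cite{PTVV13} Prop.\ 1.3), the class $\lagras\in\negacy{n-3}{2}(\grami\bullet\bullet)$ pushes forward to a class $\lagras'$ with $\anchord'(\symstra)=(\homdi{}+t\cirdi)(\lagras')$, so that $(\lmin,\lagras')$ is an isotropic distribution equivalent to $(\lalge,\lagras)$. Gluing the local data over the refined cover then produces the global perfect $\lmin$ with the three fibrewise properties.

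The main obstacle is reconciling the Gaussian elimination of the second step with strictness. Cancellation of contractible pairs is intrinsically a homotopy move that only respects the weak-equivalence class, whereas I need the cancelled complex to remain a dg submodule (a Lie--Rinehart subalgebra) of $\tanga{\dera{A}}$ and to be defined globally rather than pointwise. Arranging this — choosing the homotopies compatibly with the bracket, ensuring that the ranks being cancelled are locally constant so the elimination works in families over the cover, and checking that the degree-$1$ piece stabilises to an honest sub-bundle — is where the projectivity of the generating bundles, the local strict presentability furnished by the derived-foliation structure (Def.\ \ref{DefDerivedFoliation}), and the two cohomological hypotheses must all be used together.
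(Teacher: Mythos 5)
Your overall strategy is the paper's, run in the opposite order: one move removes the degree-$0$ generators using the purely derived hypothesis, a second move enlarges the distribution in degrees $\geq 2$ using (\ref{SecondHomologyMap}), and the isotropic structure is transported along the resulting weak equivalences. The problem is that your central step is never actually carried out. You perform the removal by Gaussian elimination, i.e.\ by cancelling contractible pairs, and you yourself identify the resulting difficulty --- the cancelled complex is a priori only a homotopy retract, not a dg submodule of $\tanga{\dera{A}}$ closed under the bracket and defined globally --- as ``the main obstacle'', and then leave it unresolved. That obstacle is precisely the content of the proposition; without resolving it you have not produced a \emph{strict} representative. The paper's way around it is to never leave the ambient complex at all: since the distribution is purely derived, the dg $\dera{A}$-submodule of $\lalge$ generated by the degree-$0$ generating bundle $G^0$ is acyclic, so one takes a complement $\lalge_1$ of this submodule \emph{inside} $\lalge$. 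This complement has generators in degrees $\geq 1$, is automatically a dg submodule of $\tanga{\dera{A}}$ closed under the bracket for degree reasons, and $\lalge_1\hookrightarrow\lalge$ is a weak equivalence, so $\lagras$ simply restricts. The enlargement is likewise performed inside $\tanga{\dera{A}}$: starting from the top degree and using (\ref{SecondHomologyMap}), one chooses sub-bundles $B^k\subseteq\ul{G}^k$ of the generating bundles of $\tanga{\dera{A}}$ with $\homdi{}\colon B^k\to B^{k+1}$, with $(\bigoplus_{k\geq1}B^k,\homdi{})$ acyclic and $\ul{G}^k\cong G_1^k\oplus B^k$ for $k\geq 2$; then $\lmin$ is the direct sum of $\lalge_1$ and the dg submodule generated by the $B^k$, and $\lalge_1\overset\simeq\longrightarrow\lmin$ lets one extend $\lagras$. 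Both moves are additions or removals of acyclic subcomplexes of $\tanga{\dera{A}}$, never quotients, so the bracket-compatibility, globality and local-rank issues you list simply do not arise.

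There is a second, independent gap: your argument for (iii) --- that a fibrewise kernel in degree $1$ ``would reintroduce non-positive cohomology or break the $\homology{\geq 2}$-isomorphism'' --- does not follow from the hypotheses of Prop.~\ref{StriPro}. A kernel vector $v\in\lmin^1|_\pt$ is indeed a $\homdi{}$-cocycle (its differential maps to $0$ in degree $2$, where the anchor is an isomorphism at $\pt$ by (ii)) and not a coboundary (by (i)), hence it gives a nontrivial class in $H^1(\lmin|_\pt)$ killed by the anchor; but nothing forbids that, since injectivity of the anchor on $H^1$ is not assumed --- (\ref{SecondHomologyMap}) only concerns $\homology{\geq 2}$, and the purely derived condition only concerns non-positive degrees. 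This style of argument is valid only under the derived-foliation hypothesis (cf.\ the remark following Def.~\ref{DefDerivedFoliation}, and the last paragraph of the proof of Prop.~\ref{TrivLa}), which you invoke in your final paragraph but which is not among the hypotheses of this proposition. In the paper, (iii) instead holds by construction: the degree-$1$ generating bundle $G_1^1\oplus B^1$ of $\lmin$ is chosen as an honest sub-bundle of $\ul{G}^1$, so its restriction to any classical point remains injective.
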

\begin{proof} As the distribution is strict, $\anchor\colon\lalge\rightarrow\tanga{{\dera{A}}}$ is a component-wise inclusion of perfect $\dera{A}$-modules, that are closed with respect to the Lie bracket on the tangent complex. We will construct now another sub-complex of $\tanga{\dera{A}}$, that is weakly equivalent over $\tanga{\dera{A}}$ to $\lalge$, closed with respect to the Lie bracket (for degree reasons), and satisfies the conditions (i)-(iii) above.

We denote by $\{G^k\}_{k\geq 0}$ a sequence of projective $\deri{A}{}{0}$-modules, s.t.\@ they generate the underlying graded $\undera{A}$-module of $\lalge$, with $G^k$ sitting in degree $k$. Since the distribution is purely derived, the dg $\dera{A}$-submodule of $\lalge$ generated by $G^0$ is acyclic. Let $\lalge_1$ be a complement of this dg sub-module in $\lalge$. For degree reasons $\lalge_1$ is a dg Lie sub-algebra of $\lalge$, and $\lalge_1\hookrightarrow\lalge$ is clearly a weak equivalence. Restricting $\lagras$ we obtain an equivalent isotropic distribution
	\begin{equation*}\anchor_1\colon(\lalge_1,\lagras_1)\longrightarrow(\tanga{{\dera{A}}},\symstra).\end{equation*}
By construction $\lalge_1$ has generators in degrees $\geq 1$, and $\lalge_1\rightarrow\tanga{{\dera{A}}}$ is injective, but it is not necessarily true that $\anchor_1$ is surjective in degrees $\geq 2$. We can choose generating bundles $\{G_1^k\}_{k\geq 1}$, $\{\ul{G}^k\}_{k\geq 1}$ for $\lalge_1$, $\tanga{{\dera{A}}}$ respectively, s.t.\@ for each $k\geq 1$ $\anchor_1$ realizes $G_1^k$ as a sub-bundle of $\ul{G}^k$. Using (\ref{SecondHomologyMap}) and starting from the top degree we can choose a decomposition
	\begin{equation*}\ul{G}^k\cong G_1^k\oplus B^k,\quad k\geq 2,\quad G_1^1\oplus B^1\subseteq\ul{G}^1\end{equation*}
where each $B^k$ is a projective $\deri{A}{}{0}$-module, $\homdi{}\colon B^k\rightarrow B^{k+1}$, and $(\underset{k\geq 1}\bigoplus B^k,\homdi{})$ is acyclic. Let $\lmin$ be the the direct sum of $\lalge_1$ and the dg submodule generated by $\{B^k\}_{k\geq 1}$. Then $\lmin$ is a sub-complex of $\tanga{\dera{A}}$ and it is closed with respect to the Lie bracket for degree reasons. In the obvious factorization
	\begin{equation*}\lalge_1\overset\simeq\longrightarrow\lmin\overset{\anchor'}\longrightarrow\tanga{{\dera{A}}},\end{equation*}
the first arrow is clearly a weak equivalence, and hence we can extend $\lagras$ to an equivalent isotropic structure $\lagras'$ on $\lmin$.\end{proof}%

Although the distribution constructed in Prop.\@ \ref{StriPro} is similar to the strict Lagrangian distributions considered in Example \ref{ExIso}, it is not necessarily true that $\anchord'(\symstra)=0$, even at the closed points. There is still one level of strictification left. We take care of it in the following proposition. For simplicity we formulate it for a particular kind of affine derived schemes: given a well presented $\dera{A}$, we will say that $\spec{\dera{A}}$  \emph{can be made flat}, if $\tanga{{\dera{A}}}$ has a graded vector subspace $V^*$ that generates $\tanga{{\dera{A}}}$ over $\dera{A}$ and s.t.\@ the Lie bracket vanishes on $V^*$. This can always be achieved by localization.

\begin{proposition}\label{TrivLa} Let $\dera{A}$ be a well presented dg algebra s.t.\@ $\spec{\dera{A}}$ can be made flat, and let $\symstra$ be a homotopically closed $2$-form of degree $-2$ on $\spec{\dera{A}}$, s.t.\@ $\forall\pt\colon\dera{A}\rightarrow\mathbb C$ $\symfo$ defines a perfect pairing between $\tanga{\deri{A}{}{0}}|_\pt$ and ${\tangde{\dera{A}}{2}|_\pt}$. Let $\anchor\colon(\lmin,\lagras)\rightarrow(\tanga{{\dera{A}}},\symstra)$ be an isotropic distribution satisfying conditions (i)-(iii) in Prop.\@ \ref{StriPro}. Then there is an equivalent isotropic distribution $\anchor'\colon(\lmin',\lagras')\rightarrow(\tanga{{\dera{A}}},\symstra)$ having properties (i)-(ii) above, and s.t.\@ $\anchord'(\symfo)=0$ modulo $\homdi{}(\deri{A}{}{-1})$. If in addition $\lmin$ is a derived foliation, $\lmin'$ would satisfy condition (iii) as well.\end{proposition}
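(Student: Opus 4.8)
The plan is to reduce the desired vanishing of $\anchord'(\symfo)$ to a statement about a single symmetric bilinear form, and then to remove that form by a gauge transformation manufactured from the isotropic structure. First I would record that, by condition (i) of Prop.~\ref{StriPro} together with projectivity of the generating bundles, $\lmin$ is concentrated in degrees $\geq 1$. Since $\symfo$ is a pairing of degree $-2$ and $\dera{A}$ is non-positively graded, the value $\symfo(v,w)$ of two elements of $\lmin$ lands in $\deri{A}{}{0}$ exactly when $\deg v=\deg w=1$, and lands in a strictly positive (hence zero) degree of $\dera{A}$ whenever $\deg v+\deg w>2$. Thus the only surviving part of $\anchord(\symfo)$ is the symmetric form $\beta\colon\lmin^1\otimes\lmin^1\to\deri{A}{}{0}$, $\beta:=\symfo|_{\lmin^1}$, and the target condition ``$\anchord'(\symfo)=0$ modulo $\homdi{}(\deri{A}{}{-1})$'' is precisely the vanishing of the corresponding form $\beta'$ in $\homology{0}(\dera{A})=\deri{A}{}{0}/\homdi{}(\deri{A}{}{-1})$, i.e. isotropy of $(\lmin')^1$ over the classical scheme.

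Next I would extract the relevant data from $\lagras$. The free ($t^0$) coefficient of $\anchord(\symstra)=(\homdi{}+t\cirdi)(\lagras)$ gives $\anchord(\symfo)=\homdi{}\lagra$; restricting to degree-$1$ inputs and expanding by the graded Leibniz rule, with $\lagra_{11}\colon\lmin^1\otimes\lmin^1\to\deri{A}{}{-1}$ and $\lagra_{12}\colon\lmin^1\otimes\tangde{\dera{A}}{2}\to\deri{A}{}{0}$ the two components of $\lagra$ that survive on such inputs, I obtain
\[
\beta(v,w)=\homdi{}\!\big(\lagra_{11}(v,w)\big)\pm\lagra_{12}(v,\homdi{}w)\pm\lagra_{12}(w,\homdi{}v).
\]
The first summand already lies in $\homdi{}(\deri{A}{}{-1})$. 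Using the perfect pairing of the hypothesis between $\tanga{\deri{A}{}{0}}$ and $\tangde{\dera{A}}{2}$, I convert $\lagra_{12}(v,-)$ into a map $h\colon\lmin^1\to\tanga{\deri{A}{}{0}}$ characterised by $\symfo(h(v),-)=\lagra_{12}(v,-)$ on $\tangde{\dera{A}}{2}$ (defined over $\homology{0}(\dera{A})$ and then lifted to $\deri{A}{}{0}$; a localisation makes the pairing perfect). Applying the closedness identity $\homdi{}\symfo(\xi,b)=\symfo(\homdi{}\xi,b)\pm\symfo(\xi,\homdi{}b)$ with $\xi\in\tanga{\deri{A}{}{0}}$ rewrites the two $\lagra_{12}$-terms, yielding modulo $\homdi{}(\deri{A}{}{-1})$
\[
\beta(v,w)\equiv\pm\symfo\!\big(\homdi{}h(v),w\big)\pm\symfo\!\big(v,\homdi{}h(w)\big).
\]

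Then comes the gauge step. I set $\lmin'$ to be the submodule of $\tanga{\dera{A}}$ agreeing with $\lmin$ in degrees $\geq 2$ and with degree-$1$ part $\{\,v-\homdi{}h(v)\mid v\in\lmin^1\,\}$. Since $\homdi{}(v-\homdi{}h(v))=\homdi{}v$, this is a subcomplex, and it is closed under the bracket for the same degree reasons as in Example~\ref{SimpleEx}, using that $\spec{\dera{A}}$ can be made flat. The map $v\mapsto v-\homdi{}h(v)$ is chain-homotopic to the inclusion of $\lmin$ via the homotopy $h$, so $\lmin'\simeq\lmin$ as distributions and I may transport $\lagra$ to an isotropic structure $\lagra'$ on $\lmin'$; properties (i)--(ii) are visibly preserved. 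Writing $v'=v-\homdi{}h(v)$, a direct expansion gives modulo $\homdi{}(\deri{A}{}{-1})$
\[
\symfo(v',w')\equiv\beta(v,w)\mp\symfo\!\big(v,\homdi{}h(w)\big)\mp\symfo\!\big(\homdi{}h(v),w\big)+\symfo\!\big(\homdi{}h(v),\homdi{}h(w)\big),
\]
where the first three terms cancel by the preceding display, while the last equals $\homdi{}\symfo\!\big(h(v),\homdi{}h(w)\big)$ by closedness together with $\homdi{}^2=0$, hence lies in $\homdi{}(\deri{A}{}{-1})$. Therefore $\anchord'(\symfo)=0$ modulo $\homdi{}(\deri{A}{}{-1})$, as required.

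Finally, for the addendum I would check condition (iii) fibrewise. In general $\homdi{}h(v)|_\pt$ need not vanish, so the correction can destroy injectivity of $(\lmin')^1|_\pt\to\tangde{\dera{A}}{1}|_\pt$; but if $\lmin$ is a derived foliation then by Def.~\ref{DefDerivedFoliation} I may pull the whole construction back to a well-presented local model that is minimal at $\pt$, where $\homdi{}=0$ on the tangent-complex fibre and hence $\homdi{}h(v)|_\pt=0$, so $(\lmin')^1|_\pt=\lmin^1|_\pt$ and (iii) is inherited. The main obstacle, I expect, is the bookkeeping in the two central displays: isolating the degree-$1$ symmetric form as the sole obstruction, expressing its class as the coboundary produced by $\lagra_{12}$ through the perfect pairing, and verifying that the residual quadratic term $\symfo(\homdi{}h(v),\homdi{}h(w))$ is $\homdi{}$-exact. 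All three hinge on the closedness of $\symfo$ and on $\homdi{}^2=0$, together with defining $h$ integrally from the perfect pairing.
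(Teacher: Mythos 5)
Your computational core is essentially the paper's own mechanism in different clothing: where you twist the degree-one generators by $v\mapsto v-\homdi{}h(v)$, with $h$ manufactured from the component of $\lagra$ pairing degree $1$ against degree $2$ via the perfect pairing, the paper chooses, inside an enlarged model $C^\bullet$ with surjective anchor, a complement $N$ of the acyclic part that is $\lagra'$-orthogonal to the degree-two generators and has $\homdi{}(N)$ inside the module they generate; these are the same correction, and your identity expressing $\beta(v,w)$ as $\pm\symfo(\homdi{}h(v),w)\pm\symfo(v,\homdi{}h(w))$ modulo $\homdi{}(\deri{A}{}{-1})$ is sound (up to signs). The genuine gap is the sentence claiming that, because $v\mapsto v-\homdi{}h(v)$ is chain-homotopic to the inclusion, ``$\lmin'\simeq\lmin$ as distributions and I may transport $\lagra$.'' Equivalence of integrable (and isotropic) distributions is not quasi-isomorphism of underlying dg modules: by Def.~\ref{DefEquivalenceDist} it requires a zigzag of weak equivalences of graded mixed algebras under $\drham{\bullet}{}{\dera{A}}$, equivalently of Lie--Rinehart algebras over $\tanga{\dera{A}}$, together with compatibility of the isotropic structures up to $(\homdi{}+t\cirdi)$-coboundary. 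Your twist is not such a morphism: it fails to preserve the bracket (the image of $[v,w]$ is $[v,w]$ itself, sitting in degree $2$, while $[v-\homdi{}h(v),\,w-\homdi{}h(w)]$ acquires the terms $[v,\homdi{}h(w)]$, $[\homdi{}h(v),w]$, $[\homdi{}h(v),\homdi{}h(w)]$), and a chain homotopy between module maps does not produce the required zigzag. Nor is there a general principle to fall back on: two distinct maximally isotropic sub-bundles of $\tosplit{E}$ give isomorphic dg modules but inequivalent distributions, so module-level quasi-isomorphism is strictly weaker than equivalence of distributions. This is precisely where the paper spends its effort: it builds the surjective-anchor model $C^\bullet$, uses the ``can be made flat'' hypothesis to endow $C^\bullet$ with a Lie--Rinehart bracket vanishing on generators (so that the inclusions in question are bracket-preserving quasi-isomorphisms), obtains the zigzag $\lmin\hookrightarrow C^\bullet\hookleftarrow\lmin'$, and extends the \emph{whole series} $\lagras$ (not just its free term) over $C^\bullet$ using contractibility of the complement of $\lmin$, before restricting to $\lmin'$. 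Without an argument of this type you have only produced a sub-complex on which $\symfo$ vanishes modulo $\homdi{}(\deri{A}{}{-1})$, not an \emph{equivalent} isotropic distribution. (The repair is to lift your twisted generators into the paper's $C^\bullet$, using its extra degree-one copy of $\tanga{\deri{A}{}{0}}$, at which point you recover the paper's proof.)

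The addendum also has a flaw. Condition (iii) of Prop.~\ref{StriPro} is a statement about the honest fibres $(\lmin')^1|_\pt\rightarrow\tangde{\dera{A}}{1}|_\pt$ of the distribution you constructed over $\dera{A}$; it is not inherited from the corresponding statement for the construction redone on a minimal model, because pulling back along the weak equivalence $\dera{A}[f^{-1}]\simeq\deri{A}{1}{\bullet}$ identifies fibres only up to quasi-isomorphism, and injectivity of a strict fibre map is not a quasi-isomorphism invariant. The paper's argument is both correct and simpler: for a derived foliation the anchor is injective on fibrewise cohomology (the remark after Def.~\ref{DefDerivedFoliation}), so \emph{any} strict distribution equivalent to $\lmin$ and satisfying (i)--(ii) satisfies (iii) automatically, since a nonzero kernel element of $(\lmin')^1|_\pt\rightarrow\tangde{\dera{A}}{1}|_\pt$ would, by (i) and (ii), be a nontrivial degree-one cocycle mapping to zero in the cohomology of $\tanga{\dera{A}}|_\pt$. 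You should replace your minimal-model argument with this one; note that it, too, presupposes the equivalence $\lmin\sim\lmin'$ whose proof is the missing step above.
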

\begin{proof} By assumption $\lmin\rightarrow\tanga{\spec{\dera{A}}}$ is an isomorphism in degrees $\geq 2$ and an inclusion in all other degrees, but $\symstra|_\lmin$ might not be $0$ on the nose. We proceed then by enlarging $\lmin$ to make the anchor surjective in each degree, and then carefully extracting a sub-complex where $\symstra$ vanishes.

The underlying graded $\undera{A}$-module of the tangent complex $\tanga{{\dera{A}}}$ is generated by projective $A^0$-modules: $\tanga{{\deri{A}{}{0}}}$, $\tosplit{E}$ and $\{\ubun^k\}$ in degrees $0$, $1$ and $k\geq 2$ respectively. Assumptions (i)-(iii) on $\lmin$ imply that the underlying graded $\undera{A}$-module is generated by projective $\deri{A}{}{0}$-modules in degrees $1$ and $\geq 2$, that we will denote by $\lhalf{E}$ and $\{\ubun^k\}$ respectively. Because of (ii) we can use $\{\ubun^k\}$ to denote generators both for $\lmin$ and $\tanga{{\dera{A}}}$. Using (iii) we can assume that $\lhalf{E}$ is a sub-bundle of $\tosplit{E}$, and we choose a complement $\rhalf{E}$ of $\lhalf{E}$ in $\tosplit{E}$. 

\smallskip

The quotient $\tanga{{\dera{A}}}/\lmin$ clearly has $\tanga{{\deri{A}{}0}}$ and $\tosplit{E}/\lhalf{E}$ as the generating bundles. Let us denote by $C^\bullet$ a surjective homotopy kernel of $\tanga{{\dera{A}}}\rightarrow\tanga{{\dera{A}}}/\lmin$. Explicitly we can write generating bundles of $C^\bullet$ as follows
	\begin{equation*}\tanga{{\deri{A}{}0}},\quad \lhalf{E}\oplus\rhalf{E}\oplus\suspensi{\tanga{{\deri{A}{}0}}}{-1},\quad\ubun^2\oplus\suspensi{\tosplit{E}/\lhalf{E}}{-1},\quad\ubun^3,\ldots\end{equation*}
in degrees $0$, $1$, $2$, $3$ and so on. The canonical morphism ${C^\bullet}\rightarrow\tanga{{\dera{A}}}$ maps $\suspensi{\tanga{{\deri{A}{}{0}}}}{-1}$, $\suspensi{\tosplit{E}/\lhalf{E}}{-1}$ to $0$. Choosing a graded vector subspace of $\tanga{\deri{A}{}{0}}\oplus\tosplit{E}\oplus\big(\underset{k\geq 2}\bigoplus\ubun^k\big)$ with vanishing Lie bracket that generates the entire complex we obtain a generating subspace also for ${C^\bullet}$, and we can define a Lie--Rinehart structure on ${C^\bullet}$ by using the anchor map to extend the $0$ bracket. Clearly $\lmin\rightarrow\tanga{{\dera{A}}}$ factors through the quasi-isomorphism $\lmin\hookrightarrow{C^\bullet}$, i.e.\@ we have an equivalence of integrable distributions.

\smallskip

Having chosen $\rhalf{E}$ we can view $\lagras$ as defined on all of ${C^\bullet}$, and since the complement of $\lmin$ in ${C^\bullet}$ is contractible, we can extend this $\lagras$ to an isotropic structure $\lagras'$ on ${C^\bullet}$. This extension is unique up to a $\homdi{}+t\cirdi$-coboundary. As usual we denote by $\lagra'$ the free term in $\lagras'$. Our assumptions on $\symfo$ imply that $\lagra'$ defines a perfect pairing between $\suspensi{\tanga{{\deri{A}{}{0}}}}{-1}$ and ${\ubun^2}$. On the other hand $\homdi{}(\rhalf{E})$ has a trivial intersection with the dg sub-module generated by $\{\ubun^k\}_{k\geq 2}$. Let $(\ubun^2)^\perp\subseteq C^1$ be the orthogonal complement with respect to $\lagra'$. Since $\homdi{}(\suspensi{\tanga{{\deri{A}{}{0}}}}{-1})=0$ we can choose 
	\begin{equation*}K\subseteq(\ubun^2)^\perp\cap\big(\lhalf{E}\oplus\rhalf{E}\oplus\suspensi{\tanga{{\deri{A}{}0}}}{-1}\big),\end{equation*} 
s.t.\@ $\homdi{}\colon K\overset\cong\longrightarrow\homdi{}(\rhalf{E})$. Now let $M\subseteq \lhalf{E}\oplus\rhalf{E}\oplus\tanga{{\deri{A}{}0}}$ be any complement of $K$, s.t.\@ $\homdi{}(M)$ belongs to the dg sub-module generated by $\{\ubun^k\}_{k\geq 2}$. By construction $\lagra'$ maps $\ubun^2$ injectively into the dual of $M$. Let $N$ be the orthogonal complement of $\ubun^2$ in $M$ with respect to $\lagra'$, and let $\lmin'\subseteq{C^\bullet}$ be the dg $\dera{A}$-submodule generated by $N$ and $\{\ubun^k\}_{k\geq 2}$.

By construction $\lmin'$ is a complement to the dg $\dera{A}$-submodule of ${C^\bullet}$ generated by $\tanga{\deri{A}{}0}$ and $K$. The latter dg sub-module is acyclic, hence $\lmin'\hookrightarrow{C^\bullet}$ is an equivalence of integrable distributions. Restricting $\lagras'$ to $\lmin'$ we obtain an isotropic structure. By definition $\deg\lagra'=-3$ and hence
	\begin{equation*}\lagra'\in\deri{A}{}{-1}\underset{\deri{A}{}{0}}\otimes\dumo{N}\underset{\deri{A}{}{0}}\otimes\dumo{N},\end{equation*}
i.e.\@ $\anchord'(\symstra)\in\homdi{}(\deri{A}{}{-1})\underset{\deri{A}{}{0}}\otimes\dumo{N}\underset{\deri{A}{}{0}}\otimes\dumo{N}$. In particular $\anchord'(\symstra)=0$ modulo $\homdi{}(\deri{A}{}{-1})$.

\smallskip

If $\lmin$ is a derived foliation, any strict distribution, that is equivalent to $\lmin$ and satisfies conditions (i)-(ii), has to satisfy condition (iii). Otherwise any non-trivial element in the kernel of the anchor would give a non-trivial cohomology class, that is mapped to $0$ in the cohomology of $\tanga{\dera{A}}$.
\end{proof}%

\begin{definition}\label{DefStrictIso} Let $\dera{A}$ be a well presented dg algebra and let $\symstra$ be a homotopically closed $2$-form of degree $-2$ on $\spec{\dera{A}}$. An isotropic distribution 
	\begin{equation*}\anchord\colon(\rahm{\dera{A}}\bullet,\symstra)\longrightarrow(\grami\bullet\bullet,\lagras)\end{equation*} 
is \emph{semi-strict}, if it satisfies conditions (i)-(iii) of Prop.\@ \ref{StriPro} and 
	\begin{equation*}\anchord(\symstra)\equiv 0\mod\homdi{}(\deri{A}{}{-1}).\end{equation*}
\end{definition}
\begin{remark}\label{LocallySemistrict} Around any closed point we can choose a minimal representative of the dg algebra. Since on minimal dg algebras symplectic forms define perfect pairings between the corresponding generating bundles of the tangent complex, Propositions \ref{StriPro}, \ref{TrivLa} imply that locally any purely derived foliation on a derived $1$-stack with a $-2$-shifted symplectic form can be made semi-strict.\end{remark}
\begin{remark}\label{StrictProperty} For an isotropic distribution $(\grami\bullet\bullet,\lagras)$ to be semi-strict is a property of only $\anchord\colon\rahm{\dera{A}}\bullet\rightarrow\grami\bullet\bullet$, it does not depend on the choice of $\lagras$.\end{remark}

It would be helpful to have a characterization of Lagrangian distributions that is similar to Remark \ref{StrictProperty}, i.e.\@ only in terms of the distribution itself. It is immediately clear that we need to have a condition involving ranks.

\begin{definition} Let $\dera{A}$ be a dg algebra, and let $\lmin$ be a perfect dg $\dera{A}$-module. \emph{The rank of $\lmin$} at $\pt\colon\dera{A}\rightarrow\mathbb C$ is the Euler characteristic of $\lmin|_\pt$.\end{definition}
We will usually assume that the rank of $\lmin$ is the same at all closed points, and call it simply the rank of $\lmin$.  It is clear that ranks of Lagrangian distributions are always half of those of the tangent complexes. 

\begin{proposition}\label{Lagrangian} Let $\symfo$ be a strict $-2$-shifted symplectic form on $\spec{\dera{A}}$, and let $\anchor\colon(\lmin,\lagras)\rightarrow(\tanga{{\dera{A}}},\symfo)$ be a purely derived semi-strict isotropic distribution with $\rk{\lmin}{}=\frac{1}{2}\rk{\tanga{{\dera{A}}}}{}$. Then $(\lmin,\lagras)$ is a Lagrangian distribution.\end{proposition}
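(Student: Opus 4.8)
The plan is to verify the defining condition of Def.\@ \ref{DefLagrangian} directly, namely that the comparison map $\lagra\colon\homke{}\to\dumo{\lmin}[n-1]\cong\grami\bullet{1}[n-2]$ of (\ref{IsoMap}), with $n=-2$ and $\homke{}$ the homotopy kernel of $\anchor$, is a quasi-isomorphism. Both its source and target are perfect dg $\dera{A}$-modules, so $\mathrm{cone}(\lagra)$ is perfect; since $\deri{A}{}{0}$ is of finite type over $\mathbb C$, it suffices to check that $\lagra$ becomes a quasi-isomorphism after restriction to every classical point $\pt\colon\dera{A}\to\mathbb C$. Around a fixed $\pt$ I would first replace $\dera{A}$ by a minimal representative, so that all differentials vanish on the fibers over $\pt$ and the strict normal form of Example \ref{StriFoEx} is in force.

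At such a point $\tanga{\dera{A}}|_\pt$ is the complex with zero differential having $\tanga{\deri{A}{}{0}}|_\pt$, $\tosplit{E}|_\pt$, $\ubun|_\pt$ in degrees $0,1,2$, and $\symfo$ supplies the two nondegenerate pairings of Example \ref{StriFoEx}: it identifies $\tanga{\deri{A}{}{0}}|_\pt$ with $\dumo{\ubun}|_\pt$ and restricts to a nondegenerate symmetric form on $\tosplit{E}|_\pt$. Writing $N$ for the degree-one generating bundle of $\lmin$, conditions (i)--(iii) of Prop.\@ \ref{StriPro} give that $\lmin|_\pt$ is $N|_\pt$ in degree $1$ and $\ubun|_\pt$ in degree $2$, with $N|_\pt\hookrightarrow\tosplit{E}|_\pt$ and the degree-two map an isomorphism. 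Comparing Euler characteristics, and using $\dim\tanga{\deri{A}{}{0}}|_\pt=\dim\ubun|_\pt$, the hypothesis $\rk{\lmin}{}=\frac{1}{2}\rk{\tanga{\dera{A}}}{}$ turns into $\dim N|_\pt=\frac{1}{2}\dim\tosplit{E}|_\pt$. The semi-strict vanishing $\anchord(\symstra)\equiv 0\bmod\homdi{}(\deri{A}{}{-1})$ means that the restriction of $\symfo$ to $N$ lies in $\homdi{}(\deri{A}{}{-1})$, which vanishes at the minimal point; hence $N|_\pt$ is isotropic for the symmetric form. A half-dimensional isotropic subspace of a nondegenerate symmetric form over $\mathbb C$ is maximal, so $N|_\pt=(N|_\pt)^{\perp}$ and the form descends to an isomorphism $\tosplit{E}|_\pt/N|_\pt\overset{\cong}{\longrightarrow}\dumo{N}|_\pt$.

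Finally I would compute both sides of $\lagra$ over $\pt$ and identify the map. Restriction to $\pt$ commutes with homotopy kernels, and since the fiber differentials vanish, $\homke{}|_\pt$ has cohomology $\tanga{\deri{A}{}{0}}|_\pt$ in degree $1$ and $\tosplit{E}|_\pt/N|_\pt$ in degree $2$, while $\dumo{\lmin}[-3]|_\pt$ has $\dumo{\ubun}|_\pt$ in degree $1$ and $\dumo{N}|_\pt$ in degree $2$. The crux, which I expect to be the main obstacle, is to check that under these identifications $\lagra$ is exactly the degree-one pairing $\tanga{\deri{A}{}{0}}|_\pt\to\dumo{\ubun}|_\pt$ coming from $\symfo$ together with the degree-two descended form $\tosplit{E}|_\pt/N|_\pt\to\dumo{N}|_\pt$; this requires unwinding the construction of (\ref{IsoMap}) from the isotropic structure and verifying that the semi-strict correction terms, which lie in $\homdi{}(\deri{A}{}{-1})$, drop out on the fiber over $\pt$. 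Granting this, both components are isomorphisms by the previous paragraph, so $\lagra|_\pt$ is a quasi-isomorphism for every $\pt$, whence $\lagra$ itself is one and $(\lmin,\lagras)$ is Lagrangian.
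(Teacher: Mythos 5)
Your strategy is in outline the same as the paper's: check that (\ref{IsoMap}) is a quasi-isomorphism fiberwise at classical points, use the rank hypothesis together with semi-strictness to see that the degree-one generating bundle of $\lmin$ is half-dimensional and isotropic, hence maximally isotropic, in $(\tosplit{E}|_\pt,\symfo)$, and then match the two graded pieces of (\ref{IsoMap})$|_\pt$ with the nondegenerate pairings supplied by $\symfo$. The peripheral steps are correct: the Euler-characteristic count, the maximal-isotropy argument, and the computation of the cohomologies of $\homke{}|_\pt$ and of $\dumo{\lmin}[-3]|_\pt$. But the step you call the crux and then explicitly ``grant'' --- that the chain map (\ref{IsoMap}) restricts at $\pt$ to the pairings induced by $\symfo$ --- is not a deferrable detail: it is the entire content of the proposition, and the only place where the defining equation $\anchord(\symstra)=(\homdi{}+t\cirdi)(\lagras)$ of the isotropic structure is used. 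Without it you have only shown that source and target of (\ref{IsoMap}) have abstractly isomorphic fiberwise cohomology, which does not make the given map a quasi-isomorphism. As written, the proposal reduces the proposition to its core assertion rather than proving it.

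The missing computation is exactly what the paper's proof carries out. Compute $\homke{}$ from a surjective replacement of the anchor: choose a complement $\rhalf{E}$ of $\lhalf{E}$ in $\tosplit{E}$, adjoin to $\lmin$ a copy of $\tanga{\deri{A}{}{0}}$ in degrees $0,1$ and a copy of $\rhalf{E}$ in degrees $1,2$, with differentials given by the identity plus the differential of $\tanga{\dera{A}}$, and extend $\lagras$ over this contractible complement. The free term of the isotropic-structure equation on the enlarged distribution reads $\homdi{}(\lagra)=\anchord(\symfo)$. Evaluating it on a pair (adjoined degree-$1$ generator, original generator), the identity component of the differential of the adjoined generator turns the left-hand side into $\lagra$ evaluated on (adjoined degree-$2$ generator, original generator) plus terms involving $\homdi{}$ of generators of $\lmin$ and $\tanga{\dera{A}}$, while the right-hand side is the $\symfo$-pairing; the correction terms have coefficients in the maximal ideal of $\pt$ (this uses minimality of $\dera{A}$ at $\pt$, not the semi-strictness defect in $\homdi{}(\deri{A}{}{-1})$ as you suggest --- semi-strictness is what you already used for isotropy). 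Hence on the fiber $\lagra$ pairs the adjoined copy of $\rhalf{E}$ in degree $2$ perfectly with $\lhalf{E}$, and the adjoined copy of $\tanga{\deri{A}{}{0}}$ in degree $1$ perfectly with $\ubun$; since these adjoined generators are precisely the kernel of the anchor at $\pt$, this is the identification you needed, and your second paragraph then finishes the proof. Note finally that your preliminary passage to a minimal model is itself not free: you must transport the strict form and the semi-strict distribution along the weak equivalence and check that fiberwise nondegeneracy, conditions (i)--(iii) of Prop.\@ \ref{StriPro}, and isotropy at $\pt$ survive. The paper avoids this by working on the given chart and keeping the terms $\homdi{}(\tanga{\deri{A}{}{0}})$ and $\homdi{}(\rhalf{E})$ in play throughout; if you prefer the minimal-model route, these transport statements have to be argued, not assumed.
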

\begin{proof} Since $\symfo$ is a strict $-2$-shifted symplectic form and $\lhalf{E}\hookrightarrow\tosplit{E}$ is injective, the assumption on the rank of $\lmin$ immediately implies that $\rk{\lhalf{E}}{\deri{A}{}{0}}=\frac{1}{2}\rk{\tosplit{E}}{\deri{A}{}{0}}$. Since we assume that $\symfo|_{\lhalf{E}}=0$ over closed points it follows that $\symfo$ defines a perfect pairing between $\lhalf{E}$ and any complement $\rhalf{E}$. Using this complement we can enlarge $\lmin$ to an equivalent distribution with a surjective anchor. In degree $2$ this distribution is $\ubun\oplus\tosplit{E}/\lhalf{E}$, and $\lagra$ has to give a perfect pairing between $\homdi{}(\rhalf{E})$ and $\lhalf{E}$. Similarly for the copy of $\tanga{\deri{A}{}{0}}$ in degree $1$ and $\ubun$. But $\homdi{}(\rhalf{E})$ and $\tanga{\deri{A}{}{0}}$ coincide with the kernel of the anchor at $\pt$. Therefore in a neighbourhood of $\pt$ $\lagra$ defines an isomorphism between kernel of the anchor and the annihilator of the complex $\tanga{\deri{A}{}{0}}\rightarrow\homdi{}(\tanga{\deri{A}{}{0}})\oplus\rhalf{E}\rightarrow\homdi{}(\rhalf{E})$.\end{proof}

Since rank of a distribution is independent of the isotropic structure we might choose, we immediately have the following useful fact.

\begin{theorem} Let $\dera{A}$ be well presented and let $\symstra$ be a $-2$-shifted symplectic form on $\spec{\dera{A}}$. Let $\anchord\colon\rahm{\dera{A}}\bullet\rightarrow\grami\bullet\bullet$ be a purely derived foliation, s.t.\@  $\rk{\drham{1}{}{\dera{A}}}{}=2\rk{\grami\bullet{1}}{}$. If $\anchord(\symstra)$ is a $\homdi{}+t\cirdi$-coboundary, any isotropic structure on $\grami\bullet\bullet$ is necessarily Lagrangian.\end{theorem}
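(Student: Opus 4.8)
The plan is to verify the Lagrangian condition of Definition \ref{DefLagrangian} --- that the map (\ref{IsoMap}) attached to an isotropic structure is a quasi-isomorphism --- for \emph{every} isotropic structure at once, by reducing to the semi-strict situation of Proposition \ref{Lagrangian}, where the Lagrangian property is forced by a rank count and is visibly insensitive to the choice of $\lagras$. Two features make the reduction legitimate. First, being Lagrangian is checked on cohomology and is therefore \emph{local} on $\spec{\dera{A}}$, so it may be tested around an arbitrary classical point $\pt$. Second, it is invariant under equivalence of isotropic distributions: an equivalence is in particular a $\homdi{}$-quasi-isomorphism, hence carries (\ref{IsoMap}) to a quasi-isomorphic map and preserves the quasi-isomorphism condition. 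Thus it suffices to establish the claim after localizing near $\pt$ and replacing the given data by any equivalent data.

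First I would translate the rank hypothesis. Since $\grami\bullet{1}$ is the shifted dual $\suspensi{\dumo{\lalge}}{1}$ of $\lalge$, and $\drham{1}{}{\dera{A}}$ is the shifted dual of $\tanga{\dera{A}}$, the equality $\rk{\drham{1}{}{\dera{A}}}{}=2\rk{\grami\bullet{1}}{}$ is precisely the half-rank condition $\rk{\lalge}{}=\frac12\rk{\tanga{\dera{A}}}{}$ required by Proposition \ref{Lagrangian}. This identity is patently independent of the isotropic structure, which is the whole point of the ``useful fact'': the only data entering Proposition \ref{Lagrangian} beyond the symplectic form are the rank and semi-strictness, and both belong to $\anchord$ alone (the latter by Remark \ref{StrictProperty}).

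Next I would perform the local reductions. Around $\pt$, Proposition \ref{LocalD} lets me pass, up to \'etale localization and equivalence, to a well presented $\dera{A}$ on which $\symstra$ becomes a strict $-2$-shifted symplectic form $\symfo$. Because $\grami\bullet\bullet$ is a purely derived foliation, Remark \ref{LocallySemistrict} (the combination of Propositions \ref{StriPro} and \ref{TrivLa}) produces an equivalent distribution that is semi-strict; by Remark \ref{StrictProperty} this equivalence can be chosen with no reference to $\lagras$, and it transports the given --- arbitrary --- isotropic structure to an isotropic structure on the semi-strict model. Here the hypothesis that $\anchord(\symstra)$ is a $\homdi{}+t\cirdi$-coboundary is exactly what guarantees that an isotropic structure exists to be transported. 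With a strict symplectic form, a semi-strict purely derived distribution, and the half-rank identity in hand, Proposition \ref{Lagrangian} applies verbatim and declares the resulting isotropic distribution Lagrangian --- and it does so regardless of which isotropic structure we transported. Running the equivalences backwards then shows the original $(\grami\bullet\bullet,\lagras)$ is Lagrangian for the arbitrary $\lagras$ we began with.

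The step I expect to be the main obstacle is confirming that Remark \ref{LocallySemistrict} genuinely applies, which rests on the cohomological matching (\ref{SecondHomologyMap}) in the hypotheses of Proposition \ref{StriPro}. One must check that after passing to a minimal model at $\pt$ --- where $\symfo$ induces perfect pairings of the generating bundles of $\tanga{\dera{A}}$ --- the purely derived and foliation conditions force $\homology{\geq 2}(\lalge)\overset\cong\longrightarrow\homology{\geq 2}(\tanga{\dera{A}})$: the foliation hypothesis supplies the strict local presentation, and minimality together with the nondegenerate top-degree pairing rigidifies the high-degree cohomology. The remaining bookkeeping --- that the chain of equivalences and the \'etale localization are all compatible with the pointwise cohomology test for (\ref{IsoMap}), so that the conclusion does not depend on the auxiliary minimal model chosen at each $\pt$ --- is routine but must be tracked carefully.
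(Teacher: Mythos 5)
Your proposal is correct and follows essentially the same route as the paper's own proof: reduce to a local check using invariance of the Lagrangian property under equivalences (noting it depends only on the free term of $\lagras$ and commutes with localization), pass to a chart where $\symstra$ is strict via Proposition \ref{LocalD}, invoke the purely-derived-foliation hypothesis through Remark \ref{LocallySemistrict} (i.e.\@ Propositions \ref{StriPro} and \ref{TrivLa}) to obtain a semi-strict model, and conclude with the rank count of Proposition \ref{Lagrangian}. The point you flag as the main obstacle --- verifying (\ref{SecondHomologyMap}) on a minimal model --- is exactly what Remark \ref{LocallySemistrict} is asserting in the paper, so your treatment matches the intended argument.
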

\begin{proof} The property of being a Lagrangian distribution is invariant with respect to weak equivalences of distributions and dg algebras. Computation of the kernel of the anchor map commutes with localization, and the Lagrangian property involves only the free term of the isotropic structure, in particular it does not involve the de Rham differential. Therefore to check that a given isotropic structure is Lagrangian it is enough to do so locally.

Locally we can switch to a chart where $\symstra$ is equivalent to a strict symplectic form $\symfo$, and since the distribution is a purely derived foliation we can find a semi-strict realization for it on this chart (Rem.\@ \ref{LocallySemistrict}). Now using Prop.\@ \ref{Lagrangian} we obtain the claim.\end{proof}%

This theorem simplifies construction of Lagrangian distributions. We need just to construct integrable distributions satisfying some cohomological conditions, and s.t.\@ the symplectic form becomes homotopically trivial there. Then any choice of an isotropic structure is automatically Lagrangian. In particular when we pull back our distributions from one chart to another and try to glue different pull-backs, it is enough to glue the integrable distributions.

\section{Global Lagrangian distributions over $\mathbb R$}\label{SectionGlobalLagrangian}

In this section we make the switch from geometry over $\mathbb C$ to geometry over $\mathbb R$. Suppose we have a derived scheme $\dersch$ over $\mathbb C$, then, at least locally on the space of classical points $\unspa$, $\dersch$ is given as a usual smooth scheme $\zeropart{S}$ together with a sequence of bundles on $\zeropart{S}$, morphisms between these bundles and their sections. E.g.\@ in the affine case $\zeropart{S}=\spec{\deri{A}{}{0}}$, and the bundles are given by the components of $\dera{A}$ in negative degrees.

Let $\mani$ be the subspace of closed points in $\zeropart{S}$. Then $\mani$ is a complex manifold, and we can consider a larger class of functions: 
	\begin{equation*}\cring{A}{0}:=\cinfty(\mani,\mathbb R).\end{equation*} 
Correspondingly we view the bundles on $\zeropart{S}$ as $\cinfty$-bundles on $\mani$, and thus obtain a dg manifold $\derm$. If $\dersch$ is equipped with a $n$-shifted symplectic structure $\symstra$, $\derm$ inherits two structures: 
	\begin{equation*}\refo{\symstra}:=\frac{\symstra+\conju{\symstra}}{2},\quad\imfo{\symstra}:=\frac{\symstra-\conju{\symstra}}{2 i}.\end{equation*}
Both are $\mathbb R$-valued $n$-shifted symplectic structures on $\derm$, and if $\symstra$ is strict, so are $\refo{\symstra}$ and $\imfo{\symstra}$. Following the analogy with gauge theory we will treat these $\mathbb R$-valued symplectic forms differently, as will be immediately apparent.

\begin{definition}\label{DefStriNe} Let $(\dersch,\symfo)$ be a derived scheme with a strict $\mathbb C$-valued $-2$-shifted symplectic structure. Let $(\derm,\imfo{\symfo},\refo{\symfo})$ be the underlying dg manifold with $\mathbb R$-valued symplectic structures. Let $\negsplit{E}{}\subseteq\tosplit{E}$ be a maximally isotropic sub-bundle with respect to $\imfo{\symfo}$, and let $\negta{\tanga{\cring{A}{\bullet}}}\subseteq\tanga{\cring{A}{\bullet}}$ be the dg sub-module generated by $\nesplit{E}\oplus\tangde{\cring{A}\bullet}{2}$. This is a strict Lagrangian distribution with respect to $\imfo{\symfo}$, and it will be called {\it strictly negative}, if $\refo{\symfo}$ is negative definite on $\nesplit{E}$.\end{definition}
In order to build Lagrangian distributions, that are globally defined, we need to be able to glue distributions coming from different charts. Having allowed $\cinfty$-functions we can make use of partitions of unity, and negative definiteness with respect to $\refo{\symfo}$ will allow us to do just that. The following proposition (taken from \cite{BoJ13}) provides the basis for this procedure.

\begin{proposition}\label{StrictInterpolating} Let $\negsplit{E}{0},\negsplit{E}{1}\subseteq\tosplit{E}$ be two maximally isotropic sub-bundles with respect to $\imfo{\symfo}$, s.t.\@ $\refo{\symfo}$ is negative definite on $\negsplit{E}{0}$, $\negsplit{E}{1}$. There is a smooth family $\negsplit{E}{t}$ $t\in[0,1]$ of maximally isotropic sub-bundles of $\tosplit{E}$ with respect to $\imfo{\symfo}$ interpolating between $\negsplit{E}{0}$ and $\negsplit{E}{1}$, s.t.\@ $\refo{\symfo}|_{\negsplit{E}{t}}$ is negative definite.\end{proposition}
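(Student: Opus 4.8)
The plan is to reduce the statement, which is fibrewise over $\mani$, to a convexity fact in linear algebra and then observe that all choices are smooth in the base point and in $t$. Everything takes place in the bundle $\tosplit{E}$ equipped with the two real symmetric forms $\refo{\symfo}$, $\imfo{\symfo}$ coming from $\symfo|_{\tosplit E}$ (non-degenerate symmetric and $\mathbb C$-bilinear, by Example \ref{StriFoEx}). First I would record the effect of the complex structure $J$ (multiplication by $i$): $\mathbb C$-bilinearity of $\symfo|_{\tosplit E}$ gives $\symfo(Jz,Jw)=-\symfo(z,w)$, hence $\refo{\symfo}(Jz,Jw)=-\refo{\symfo}(z,w)$ and $\imfo{\symfo}(Jz,Jw)=-\imfo{\symfo}(z,w)$. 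Consequently $J$ sends $\imfo{\symfo}$-isotropics to $\imfo{\symfo}$-isotropics, so $J\negsplit{E}{0}$ is again maximally isotropic, but now $\refo{\symfo}$ is \emph{positive} definite on it. The same relation shows $J$ is an anti-isometry for $\imfo{\symfo}$, forcing its signature to be $(n,n)$ with $n=\rank_{\mathbb C}\tosplit{E}$; thus $\negsplit{E}{0}$ has real rank $n$, and since it is transverse to $J\negsplit{E}{0}$ (negative versus positive definite for $\refo{\symfo}$), the underlying real bundle splits as $\tosplit{E}=\negsplit{E}{0}\oplus J\negsplit{E}{0}$. I would then set $g:=-\refo{\symfo}|_{\negsplit{E}{0}}$, a positive definite metric.

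Next I would parametrise all competitors as graphs over $\negsplit{E}{0}$. For any maximally $\imfo{\symfo}$-isotropic, $\refo{\symfo}$-negative definite subbundle $\negsplit{E}{}$, positivity of $\refo{\symfo}$ on $J\negsplit{E}{0}$ against negativity on $\negsplit{E}{}$ forces $\negsplit{E}{}\cap J\negsplit{E}{0}=0$, so $\negsplit{E}{}$ is the graph $\negsplit{E}{}=\{v+JTv\mid v\in\negsplit{E}{0}\}$ of a unique $T\in\mathrm{End}(\negsplit{E}{0})$. A direct computation with the sign relations above yields, for $v,v'\in\negsplit{E}{0}$,
\[\imfo{\symfo}(v+JTv,\,v'+JTv')=-g(Tv,v')-g(v,Tv'),\qquad \refo{\symfo}(v+JTv,\,v+JTv)=g(Tv,Tv)-g(v,v).\]
Hence $\negsplit{E}{}$ is $\imfo{\symfo}$-isotropic exactly when $T$ is $g$-skew-adjoint, and $\refo{\symfo}$-negative definite exactly when its $g$-operator norm satisfies $\|T\|_{g}<1$. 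So admissible subbundles correspond bijectively, and smoothly, to sections $T$ of the bundle of $g$-skew endomorphisms lying in the open unit ball $\{\|T\|_{g}<1\}$, which in each fibre is an open convex subset of a vector space.

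Finally I would produce the homotopy. Here $\negsplit{E}{0}$ is the graph of $T=0$ and $\negsplit{E}{1}$ the graph of some $g$-skew $T_{1}$ with $\|T_{1}\|_{g}<1$, and I would simply take the straight line $T_{t}:=tT_{1}$ for $t\in[0,1]$. Each $T_{t}$ is $g$-skew and $\|T_{t}\|_{g}=t\|T_{1}\|_{g}<1$, so by the previous paragraph $\negsplit{E}{t}:=\{v+JtT_{1}v\mid v\in\negsplit{E}{0}\}$ is maximally $\imfo{\symfo}$-isotropic and $\refo{\symfo}$-negative definite for every $t$, and it interpolates between $\negsplit{E}{0}$ and $\negsplit{E}{1}$. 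Smoothness is manifest, since $T_{1}$, $g$ and $J$ are smooth over $\mani$ and $T_{t}$ depends smoothly on $t$.

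I expect the only real content to be the dictionary translating the two geometric conditions into ``$T$ is $g$-skew'' and ``$\|T\|_{g}<1$''; once this is in place the interpolation is forced by convexity of the norm ball in the linear space of $g$-skew endomorphisms. In particular it is precisely the negative-definiteness of $\refo{\symfo}$, encoded as the operator-norm bound, that keeps the straight-line path $T_{t}=tT_{1}$ admissible---which is the structural reason this condition is imposed. The remaining points (smoothness of the graph construction over the base, and the anti-isometry argument placing $J\negsplit{E}{0}$ transverse to $\negsplit{E}{0}$) are routine.
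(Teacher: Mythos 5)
Your proof is correct, and the key computations all check out: the sign identities for $J$, the transversality and rank counts giving $\tosplit{E}=\negsplit{E}{0}\oplus J\negsplit{E}{0}$, and the dictionary (isotropic $\Leftrightarrow$ $T$ is $g$-skew, negative definite $\Leftrightarrow$ $\|T\|_{g}<1$). It is, however, a genuinely different construction from the paper's. The paper splits $\tosplit{E}=\negsplit{E}{1}\oplus i\,\negsplit{E}{1}$, lets $\pi$ be the induced projection of $\negsplit{E}{0}$ into $\negsplit{E}{1}$, and takes as the family the images of the interpolated bundle maps $\pi_t=\iota-t(\iota-\pi)$; isotropy of each image holds because both summands of that splitting are $\imfo{\symfo}$-isotropic and the cross terms cancel, while negative definiteness (which also forces injectivity of $\pi_t$) follows from the $\refo{\symfo}$-orthogonality of the decomposition $e=\pi(e)+(e-\pi(e))$. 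You instead classify \emph{all} admissible sub-bundles at once as graphs of $g$-skew endomorphisms in the open unit ball and join the endpoints by the straight line there. Your route proves strictly more than the proposition asks: the space of admissible sub-bundles is fibrewise an open convex set, hence contractible and not merely path-connected, which is precisely the structural fact driving the later gluing arguments (Prop.~\ref{StrictGluing}, Prop.~\ref{EquivalentInterpolating}); the paper's argument buys brevity, needing only one projection and two direct verifications. One caveat if your proof were substituted into the paper: the two constructions give \emph{different} paths of sub-bundles for $0<t<1$ (the paper interpolates $\iota$ with $\pi$, you interpolate $\iota$ with the graph embedding of $T_1$; these two maps $\negsplit{E}{0}\to\negsplit{E}{1}$ have the same image but differ by an automorphism of the source, and in rank-two examples the resulting families visibly disagree). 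The proof of Prop.~\ref{EquivalentInterpolating} identifies its family of weak equivalences with the specific family produced in Prop.~\ref{StrictInterpolating}, so that identification would have to be redone for the straight-line graph family; the argument adapts, since your family is equally affine in $t$, but it is not verbatim the same.
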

\begin{proof} The splitting $\tosplit{E}=\nesplit{E_1}\oplus\, i\,\nesplit{E_1}$ gives us a projection $\pi\colon\nesplit{E_1}\rightarrow\nesplit{E_1}$, and then a $[0,1]$-family of maps 
	\begin{equation*}\pi_t:=\iota-t(\iota-\pi)\colon\nesplit{E_0}\longrightarrow\tosplit{E},\end{equation*}
where $\iota\colon\nesplit{E_0}\hookrightarrow\tosplit{E}$ is the inclusion. Obviously for each $t\in [0,1]$ the image of $\pi_t$ is isotropic with respect to $\imfo{\symfo}$. Moreover, $\pi_t$ is injective for each such $t$. Indeed, let $e$ be a local section of $\nesplit{E_0}$ s.t.\@
	\begin{equation*}\refo{\symfo}(e,e)=\refo{\symfo}(\pi(e),\pi(e))+\refo{\symfo}(e-\pi(e),e-\pi(e))<0.\end{equation*}
Then, since $\refo{\symfo}$ is negative definite on $\nesplit{E_0}$, for $0\leq t\leq 1$ we have
	\begin{equation*}\refo{\symfo}(\pi_t(e),\pi_t(e))=
	\refo{\symfo}(\pi(e),\pi(e))+t^2\refo{\symfo}(e-\pi(e),e-\pi(e))<0.\end{equation*}
So $\pi_t(e)\neq 0$ and we have shown, in addition, that $\refo{\symfo}$ is negative definite on the entire family.\end{proof}%

The family of strictly negative Lagrangian distributions on $\derm$ obtained from the image of $\pi_t$ $t\in[0,1]$ will be denoted by $\negtat{\tanga{\cring{A}{\bullet}}}{t}$. Now we can state the gluing argument (again taken from \cite{BoJ13}).

\begin{proposition}\label{StrictGluing} Let $U\subseteq\mani=\spec{\cring{A}{0}}$ be an open subset, and $V\subseteq U$, that is closed in $\mani$. Let $\negsplit{E}{1}$, $\negsplit{E}{0}$ be some maximally isotropic sub-bundles of $\tosplit{E}$ with respect to $\imfo{\symfo}$, defined on $\mani$ and $U$ respectively, s.t.\@ $\refo{\symfo}$ is positive definite on $\negsplit{E}{1}$ and on $\negsplit{E}{0}$. 

There is a sub-bundle $\negsplit{E}{{01}}$ defined on all of $\mani$, s.t.\@ it is maximally isotropic with respect to $\imfo{\symfo}$, restriction of $\refo{\symfo}$ to it is positive definite, and 
	\begin{equation*}\negsplit{E}{{01}}|_{\mani\setminus U}=\negsplit{E}{1}|_{\mani\setminus U},\quad\negsplit{E}{{01}}|_{V}=\negsplit{E}{0}|_{V}.\end{equation*}\end{proposition}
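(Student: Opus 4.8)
The plan is to reduce the statement to the pointwise interpolation of Proposition~\ref{StrictInterpolating} together with a single smooth cutoff function, exactly as in the classical partition-of-unity gluing of sub-bundles. The idea is to interpolate between $\negsplit{E}{1}$ and $\negsplit{E}{0}$ on the region where both are defined, using the cutoff to control where each of the two bundles is seen, and then to observe that the interpolating family degenerates to $\negsplit{E}{1}$ precisely where the cutoff vanishes, so that the construction extends smoothly by $\negsplit{E}{1}$ over the rest of $\mani$.

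First I would produce the cutoff. Since $\mani$ is a smooth (hence metrizable, hence normal) manifold, $V$ is closed in $\mani$, and $V\subseteq U$ with $U$ open, normality yields disjoint open sets separating $V$ from $\mani\setminus U$; shrinking, one obtains an open $U'$ with $V\subseteq U'\subseteq\overline{U'}\subseteq U$. A smooth Urysohn (bump) function then gives $\rho\colon\mani\to[0,1]$ with $\rho|_V=1$ and $\rho=0$ off $U'$, so that $\operatorname{supp}\rho\subseteq\overline{U'}\subseteq U$ is closed in $\mani$ and contained in $U$.

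Next I would invoke Proposition~\ref{StrictInterpolating} on the open set $U$, where both $\negsplit{E}{0}$ and $\negsplit{E}{1}$ are defined. (Although that proposition is phrased for the negative-definite case, its construction $\pi_t=\iota-t(\iota-\pi)$ is insensitive to the global sign of $\refo{\symfo}$, so it applies verbatim here.) Reversing the orientation of the parameter interval if needed, I normalize the family $\negsplit{E}{s}$, $s\in[0,1]$, so that $\negsplit{E}{s=0}=\negsplit{E}{1}$ and $\negsplit{E}{s=1}=\negsplit{E}{0}$; each member is then maximally isotropic for $\imfo{\symfo}$ and positive definite for $\refo{\symfo}$, and the family depends smoothly on the base point (the maps $\iota,\pi$ do, and each $\pi_t$ is injective of constant rank). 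I would then set $\negsplit{E}{01}(x):=\negsplit{E}{\rho(x)}(x)$ for $x\in U$ and $\negsplit{E}{01}(x):=\negsplit{E}{1}(x)$ for $x\in\mani\setminus\operatorname{supp}\rho$. These two open sets cover $\mani$ because $\operatorname{supp}\rho\subseteq U$, and on the overlap $U\setminus\operatorname{supp}\rho$ one has $\rho\equiv0$, whence $\negsplit{E}{\rho(x)}(x)=\negsplit{E}{s=0}(x)=\negsplit{E}{1}(x)$; so the two local formulas agree and glue to a single smooth sub-bundle on all of $\mani$. Being, at each point, a member of the interpolating family or equal to $\negsplit{E}{1}$, it is maximally isotropic for $\imfo{\symfo}$ and positive definite for $\refo{\symfo}$ everywhere. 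Finally, $\rho|_V=1$ gives $\negsplit{E}{01}|_V=\negsplit{E}{s=1}|_V=\negsplit{E}{0}|_V$, and $\rho|_{\mani\setminus U}=0$ together with $\operatorname{supp}\rho\subseteq U$ gives $\negsplit{E}{01}|_{\mani\setminus U}=\negsplit{E}{1}|_{\mani\setminus U}$, as required.

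The routine calculations are all delegated to Proposition~\ref{StrictInterpolating}, so the only points needing genuine care are structural. The main obstacle I anticipate is verifying that feeding the variable parameter $t=\rho(x)$ into the interpolation preserves smoothness across the boundary: this is exactly why the cutoff must have support closed in $\mani$ and contained in $U$, so that the seam between the interpolated region and the region where we simply keep $\negsplit{E}{1}$ lies inside the open locus $\{\rho=0\}$, where the two descriptions literally coincide. One should also confirm that $\pi_t$ has locally constant rank, so that its image is an honest sub-bundle rather than merely a fiberwise family of subspaces; this follows from the injectivity established in the proof of Proposition~\ref{StrictInterpolating}.
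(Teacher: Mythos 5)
Your proof is correct and takes essentially the same route as the paper's: a smooth cutoff function whose pointwise values are substituted for the parameter $t$ in the interpolating family of Proposition~\ref{StrictInterpolating}, with the resulting bundle extended by $\negsplit{E}{1}$ outside the support of the cutoff. Your explicit handling of the sign convention (positive versus negative definiteness, immaterial to the construction $\pi_t=\iota-t(\iota-\pi)$) and of smoothness across the seam merely spells out what the paper's shorter proof leaves implicit.
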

\begin{proof} Let $f\in\cring{A}{0}$ be s.t.\@ $f=1$ in a neighborhood of $V$ and $f=0$ in a neighborhood of $\mani\setminus U$. Let $W$ be the closure of the locus where $f\neq 0,1$. Both $\nesplit{E_0}$ and $\nesplit{E_1}$ are defined over $W$ and we can use the values of $f$ instead of $t$ in the family $\negtat{\tang{\cring{A}{\bullet}}{\mathbb R}|_W}{t}$ that interpolates from $\nesplit{E_0}$ to $\nesplit{E_1}$.\end{proof}%

In addition to the possibility of gluing, strictly negative Lagrangian distributions on $(\derm,\imfo{\symfo},\refo{\symfo})$ have very useful cohomological properties, implying that such distributions are in fact derived foliations.

\begin{proposition} Let $\anchor\colon\lmin\rightarrow\tanga{\derm}$ be a strictly negative Lagrangian distribution. Then $\anchor$ is injective on cohomology at each closed point.\end{proposition}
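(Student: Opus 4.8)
Since the assertion is about cohomology at a fixed closed point, the plan is to pass to the fibre at $\pt$ and work with complexes of finite-dimensional real vector spaces. Because $\symfo$ is strict, Example \ref{StriFoEx} tells us that $\tanga{\derm}|_\pt$ is concentrated in degrees $0,1,2$ and has the shape $\tanga{\cring{A}{0}}|_\pt \xrightarrow{d_0} \tosplit E|_\pt \xrightarrow{d_1} \ubun|_\pt$, where (since $\imfo{\symfo}$ is strict) $\imfo{\symfo}$ induces a perfect pairing $p$ between the degree-$0$ and degree-$2$ fibres and a nondegenerate symmetric form on the degree-$1$ fibre. By Definition \ref{DefStriNe} the distribution $\lmin|_\pt$ is the subcomplex $\nesplit E|_\pt \xrightarrow{d_1} \ubun|_\pt$ placed in degrees $1,2$. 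As $\lmin|_\pt$ vanishes in degree $0$, injectivity of $\anchor$ is automatic there, so I would only need to handle degrees $1$ and $2$.

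The argument I have in mind rests on three structural inputs. First, as noted before Definition \ref{DefStriNe}, $\imfo{\symfo}$ is again a strict $-2$-shifted symplectic form; its being $\homdi{}$-closed means that the induced self-duality is a chain map, which at $\pt$ is exactly the adjunction $\imfo{\symfo}(d_0 v, e) = p(v, d_1 e)$ (up to an overall sign) for $v\in\tanga{\cring{A}{0}}|_\pt$ and $e\in\tosplit E|_\pt$. Second, the differentials $d_0,d_1$ commute with the complex structure $J$ on the bundles, since they descend from the $\mathbb C$-linear differential $\homdi{}$ of $\dera A$. Third, $\mathbb C$-bilinearity of $\symfo$ gives the pointwise identity $\refo{\symfo}(u,w)=\imfo{\symfo}(Ju,w)$.

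The core of the proof is then the single fibrewise claim that $\mathrm{im}(d_0)\cap\nesplit E|_\pt=0$. To see it, suppose $e=d_0v\in\nesplit E|_\pt$ and compute
$$\refo{\symfo}(e,e)=\imfo{\symfo}(Je,d_0v)=\imfo{\symfo}(d_0v,Je)=p\big(v,d_1(Je)\big)=p\big(v,J\,d_1d_0v\big)=0,$$
using in turn the third input, symmetry of $\imfo{\symfo}$, the adjunction, the second input, and $d_1d_0=0$. Since $\refo{\symfo}$ is negative definite on $\nesplit E$ (Definition \ref{DefStriNe}), this forces $e=0$. Degree $1$ follows at once: the kernel of $H^1(\lmin|_\pt)\to H^1(\tanga{\derm}|_\pt)$ is precisely $\nesplit E|_\pt\cap\mathrm{im}(d_0)$ (recall $\mathrm{im}(d_0)\subseteq\ker d_1$), which we have just shown to be $0$.

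The step I expect to be the main obstacle is degree $2$, where injectivity is equivalent to the equality of images $d_1(\nesplit E|_\pt)=d_1(\tosplit E|_\pt)$; a naive splitting $\tosplit E|_\pt=\nesplit E|_\pt\oplus J\nesplit E|_\pt$ does not give this directly, because $d_1(\nesplit E|_\pt)$ need not be $J$-invariant. Instead I would dualize: using $\imfo{\symfo}$ to identify $(\tosplit E|_\pt)^\vee\cong\tosplit E|_\pt$ and $p$ to identify $(\ubun|_\pt)^\vee\cong\tanga{\cring{A}{0}}|_\pt$, the adjunction turns $d_1^\vee$ into $d_0$, while maximal isotropy of $\nesplit E$ turns the annihilator of $\nesplit E|_\pt$ into $\nesplit E|_\pt$ itself. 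Under these identifications the desired equality of images becomes exactly $\mathrm{im}(d_0)\cap\nesplit E|_\pt=0$, i.e.\ the claim already established. Thus all three degrees reduce to one negative-definiteness computation, and the self-duality packaging is what lets the degree-$2$ case inherit it from degree $1$.
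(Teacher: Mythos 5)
Your proof is correct, and its core coincides with the paper's: both arguments hinge on the observation that a coboundary $e=d_0v\in\tosplit{E}|_\pt$ pairs to zero with itself under $\symfo$ at the closed point, so negative definiteness of $\refo{\symfo}$ on $\nesplit{E}$ forces $\mathrm{im}(d_0)\cap\nesplit{E}|_\pt=0$, which is exactly injectivity in degree $1$. (The paper obtains the vanishing $\symfo(e,e)|_\pt=0$ in one stroke from $\homdi{}$-closedness of $\symfo$ and the fact that $\pt$ kills $\homdi{}$-exact functions; your chain $\refo{\symfo}(e,e)=\imfo{\symfo}(Je,d_0v)=p(v,Jd_1d_0v)=0$ is an expanded version of the same computation, with the adjunction and $J$-linearity made explicit.) Where you genuinely diverge is degree $2$. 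The paper argues cohomologically: the image of $\homology{1}(\lmin|_\pt)$ in $\homology{1}(\tanga{\derm}|_\pt)$ is isotropic for the nondegenerate pairing induced by $\imfo{\symfo}$, hence at most half-dimensional, and a rank count (which implicitly uses that the adjunction forces $\mathrm{rank}\,d_0=\mathrm{rank}\,d_1$) then shows every coboundary in $\ubun|_\pt$ already lies in $d_1(\nesplit{E}|_\pt)$. You instead stay at the chain level and dualize: the identifications $(\tosplit{E}|_\pt)^\vee\cong\tosplit{E}|_\pt$ via $\imfo{\symfo}$ and $(\ubun|_\pt)^\vee\cong\tanga{\cring{A}{0}}|_\pt$ via $p$ turn $d_1^\vee$ into $d_0$ and, by maximal isotropy, the annihilator of $\nesplit{E}|_\pt$ into $\nesplit{E}|_\pt$ itself, so the required equality $d_1(\nesplit{E}|_\pt)=d_1(\tosplit{E}|_\pt)$ becomes precisely the degree-$1$ statement $\mathrm{im}(d_0)\cap\nesplit{E}|_\pt=0$; I checked the annihilator bookkeeping ($\mathrm{Ann}(d_1(\nesplit{E}|_\pt))\cong d_0^{-1}(\nesplit{E}|_\pt)$ versus $\mathrm{Ann}(d_1(\tosplit{E}|_\pt))\cong\ker d_0$) and it closes correctly. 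The trade-off: the paper's count is shorter but leaves both the isotropy of the image in $\homology{1}$ and the rank equality $\mathrm{rank}\,d_0=\mathrm{rank}\,d_1$ unstated, whereas your version makes every input explicit, avoids dimension counting altogether, and packages the whole proposition as a single negative-definiteness fact propagated by the symplectic self-duality, at the cost of carrying the identifications carefully. Both routes use the same underlying ingredients (adjointness of $d_0,d_1$ under the pairing and maximal isotropy of $\nesplit{E}$), so this is a difference of packaging rather than of substance, but yours is the more self-contained write-up.
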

\begin{proof} As $\lmin$ has generators only in degrees $1$ and $2$, every cocycle in degree $1$ is non-trivial. Let $e$ be a section of $\tosplit{E}$ that is a coboundary. Then $\symfo(e,e)=0$, in particular $\refo{\symfo}(e,e)=0$. Thus $e\in\lhalf{E}\Rightarrow e=0$, i.e.\@ $\anchor$ is injective on degree $1$ cohomology. Since the image of $\anchor$ on degree $1$ cohomology has to be no more than half of all the degree $1$ cohomology, counting ranks we conclude that every coboundary in $\ubun$ lies in the image of $\anchor$.\end{proof}%

In order to be able to use the results of previous sections we need to be sure that after interpolating our integrable distributions still have the important property of being derived foliations. This is not difficult to see.

\begin{proposition} Let $\negsplit{E}{0},\negsplit{E}{1}\subseteq\tosplit{E}$ be two maximally isotropic sub-bundles with respect to $\imfo{\symfo}$, s.t.\@ $\refo{\symfo}$ is negative definite on $\negsplit{E}{0}$, $\negsplit{E}{1}$. Let $\negsplit{E}{t}$ $t\in[0,1]$ be the family of maximally isotropic sub-bundles of $\tosplit{E}$ with respect to $\imfo{\symfo}$ interpolating between $\negsplit{E}{0}$ and $\negsplit{E}{1}$ as in Prop.\@ \ref{StrictInterpolating}. If $\negsplit{E}{0}$, $\negsplit{E}{1}$ are derived foliations, the same would be true for the interpolating family.\end{proposition}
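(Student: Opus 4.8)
The plan is to verify the derived foliation property separately for each parameter $t\in[0,1]$, reducing it to the cohomological criterion already available in this section. By the Remark following Definition \ref{DefDerivedFoliation}, being a derived foliation is a local condition that, after passing to a minimal representative of $\cring{A}{\bullet}$ at a classical point $\pt$, is detected purely on cohomology: the anchor must be injective on cohomology at $\pt$ (dually, the induced map of de Rham complexes must be surjective on cohomology). So I would first replace the statement ``$\negtat{\tanga{\cring{A}{\bullet}}}{t}$ is a derived foliation'' by the pointwise assertion that its anchor is injective on cohomology at every closed point, for every $t$.

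First I would note that, by Prop.\ \ref{StrictInterpolating}, every member $\negsplit{E}{t}$ of the family is maximally isotropic for $\imfo{\symfo}$ and carries a negative-definite $\refo{\symfo}$. Hence each $\negsplit{E}{t}$ defines a strictly negative Lagrangian distribution in the sense of Definition \ref{DefStriNe}, and the preceding proposition applies to it without change. The key observation making this uniform in $t$ is that along the family only the degree-one generating bundle $\negsplit{E}{t}$ moves, while the generators in degrees $\geq 2$ remain the full $\tangde{\cring{A}\bullet}{2}$; thus injectivity on cohomology needs to be checked only in degree one, and there the argument of the preceding proposition is insensitive to $t$: a degree-one coboundary $e$ satisfies $\symfo(e,e)=0$, so $\refo{\symfo}(e,e)=0$, and negative-definiteness of $\refo{\symfo}$ on $\negsplit{E}{t}$ forces $e=0$ as soon as $e$ lies in $\negsplit{E}{t}$.

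Next I would upgrade this pointwise injectivity to the derived foliation property. For each closed point I would pick a minimal model of $\cring{A}{\bullet}$; there the cohomological differential vanishes at $\pt$, so the degree-one part of the tangent complex is its own cohomology, and injectivity lets me realize $\negsplit{E}{t}$ as a genuine sub-bundle of it. Together with the fixed identification of the degree $\geq 2$ generators, this exhibits the pulled-back distribution as a quotient of the de Rham complex by a graded mixed ideal with weight-one generators, which is precisely the local presentation demanded by Definition \ref{DefDerivedFoliation}. Since this holds for the endpoints by hypothesis and, by the computation above, equally for every intermediate $t$, the whole interpolating family consists of derived foliations.

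The only delicate step is the passage to the minimal model while retaining the strict, weight-one presentation of the distribution; this is exactly where a naive interpolation could a priori fail. The point of the argument is that the single input it requires, namely injectivity on cohomology at each closed point, is supplied uniformly along the path by the negative-definiteness of $\refo{\symfo}$ on $\negsplit{E}{t}$ guaranteed by Prop.\ \ref{StrictInterpolating}. Consequently no new cohomological degeneration can appear for intermediate $t$, and the property is inherited from $\negsplit{E}{0}$ and $\negsplit{E}{1}$ by the entire family.
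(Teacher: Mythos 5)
You have a genuine gap, and it sits exactly where the work of the proposition lies. In your first paragraph you invoke the Remark following Definition~\ref{DefDerivedFoliation} as if it characterized derived foliations cohomologically, but that Remark only records a \emph{necessary} condition: if the pulled-back distribution is a quotient $\gramii{\bullet}{\bullet}{1}$ of $\drham{\bullet}{}{\deri{A}{1}{\bullet}}$ by a weight-one graded mixed ideal, then $\homology{*}(\drham{\bullet}{}{\deri{A}{1}{\bullet}})|_\pt\rightarrow\homology{*}(\gramii{\bullet}{\bullet}{1})|_\pt$ is surjective (dually, the anchor is injective on cohomology at $\pt$). Neither the Remark nor anything else in the paper asserts the converse, so replacing ``$\negsplit{E}{t}$ is a derived foliation'' by this pointwise cohomological condition begs the question: producing the required weight-one quotient presentation for every intermediate $t$ \emph{is} the content of the proposition.

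Your third paragraph, which is meant to supply that presentation, does not work as written. Injectivity of the anchor on cohomology at $\pt$ concerns only $\negsplit{E}{t}|_\pt\cap\ker\homdi{}$; the bundle $\negsplit{E}{t}$ is not contained in the cocycles, and it cannot be ``realized as a genuine sub-bundle'' of the degree-one part of the minimal model's tangent complex: its real rank equals $\rank_{\mathbb C}\tosplit{E}$, while that degree-one part can have strictly smaller rank, the difference being the boundaries in degrees $1$ and $2$ that the minimal model removes. What the paper actually does here is divide the distribution by the acyclic dg submodule generated by sections of $\negsplit{E}{t}$ near $\pt$ that are mapped injectively by $\homdi{}$ (together with their images in degree $2$); only this quotient descends, i.e.\@ is a pull-back of a distribution over $\deri{A}{1}{\bullet}$ presented as a quotient by a graded mixed ideal. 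Carrying this out requires knowing how many such sections to take, that this number is the same for every $t$ (the paper's Euler-characteristic count, which uses that negative definiteness of $\refo{\symfo}$ excludes boundaries from $\negsplit{E}{t}|_\pt$), and that the choice persists for neighbouring values of $t$ and in a neighbourhood of $\pt$. Your argument contains the ``no boundaries'' observation (your second paragraph matches the paper there), but none of the division step, so the passage from pointwise cohomological injectivity to the derived-foliation presentation --- the only thing that needed proving --- is missing.
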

\begin{proof} Let $\pt\colon\cring{A}\bullet\rightarrow\mathbb R$ be a classical point, and let $\cringi{A}{1}{\bullet}$ be a minimal representative of $\cring{A}\bullet$ at $\pt$. Localizing $\cring{A}\bullet$, if necessary, we can assume that we have a surjective quasi-isomorphism $\cring{A}\bullet\rightarrow\cringi{A}{1}\bullet$. For any value of $t$ the subspace $\negsplit{E}{t}|_\pt\subseteq\tosplit{E}|_\pt$ has half the dimension and $\refo{\symfo}$ is negative definite on it. Thus it cannot contain any boundaries and, counting the Euler characteristic, we have $\dim\homdi{}(\negsplit{E}{t}|_\pt)=\dim\homdi{}(\negsplit{E}{1}|_\pt)$. We can choose just as many sections of $\negsplit{E}{t}$ around $\pt$, that are mapped injectively by $\homdi{}$, and this remains true for neighbouring values of $t$. Dividing by the dg submodule generated by these sections we clearly obtain a distribution that is a pull-back of a distribution over $\cringi{A}{1}\bullet$, which is a quotient by a graded mixed ideal.\end{proof}%

As with everything also the property of negative definiteness needs to be weakened, so as to become invariant with respect to weak equivalences.

\begin{definition} \label{DefNegDist}Let $(\dersch,\symstra)$ be a derived scheme with a $\mathbb C$-valued $-2$-shifted symplectic structure. Let $(\derm,\imfo{\symstra},\refo{\symstra})$ be the underlying dg manifold with $\mathbb R$-valued symplectic structures. An isotropic distribution 
	\begin{equation*}\anchor\colon(\lalge,\lagras)\longrightarrow(\tanga{\derm},\imfo{\symstra})\end{equation*}
is {\it negative definite with respect to $\refo{\symfo}$}, if $\refo{\symfo}$ defines a negative definite pairing on $H^1(\lalge|_\pt,\homdi{})$ for each $\pt\colon\cring{A}\bullet\rightarrow\mathbb R$.\end{definition}
Before making the gluing argument in the non-strict case we would like to show that negative definiteness on the cohomology can be made into negative definiteness on the fibers of $\tanga{\dera{A}}$ over closed points, if we switch to an equivalent integrable distribution.

\begin{proposition} Let $\dera{A}$ be a well presented dg $\mathbb C$-algebra and let $\symfo$ be a strict $-2$-shifted symplectic form in $\spec{\dera{A}}$. Given a semi-strict Lagrangian distribution
	\begin{equation*}(\lmin,\lagras)\longrightarrow(\tanga{\derm},\imfo{\symfo})\end{equation*}
that is negative definite with respect to $\refo{\symfo}$, there is an equivalent semi-strict Lagrangian distribution $(\lmin_1,\lagras_1)\rightarrow(\tanga{\derm},\imfo{\symfo})$, s.t.\@ $\forall\pt\colon\cring{A}\bullet\rightarrow\mathbb R$, $\refo{\symfo}$ is negative definite on $\nesplit{E_1}|_\pt$.\end{proposition}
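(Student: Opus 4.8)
The plan is to reduce the statement to a fibrewise linear-algebra construction over the manifold $\mani$ of classical points, and then to globalize it and check that the modified distribution is equivalent to the given one. Fix a classical point $\pt$ and analyze the degree $1$ fibre $V:=\tosplit E|_\pt$. Since $\symfo$ is strict, it restricts to a nondegenerate $\mathbb C$-symmetric form on $V$; writing $\symfo=\refo{\symfo}+i\,\imfo{\symfo}$, both real parts are real-symmetric of split signature, tied by $\refo{\symfo}(v,w)=\imfo{\symfo}(iv,w)$, and multiplication by $i$ is an anti-isometry of each. The anchor realizes the differential in degrees $0,1$ as maps $d_0\colon\tanga{\cring A0}|_\pt\to V$ and $d_1\colon V\to\ubun|_\pt$, and strictness forces $d_1=d_0^*$ with respect to these pairings, so $\mathrm{im}\,d_0$ is $\symfo$-isotropic, $\ker d_1=(\mathrm{im}\,d_0)^{\perp_\symfo}$, and the induced form on $\homology{1}(\tanga{\dera A}|_\pt)=\ker d_1/\mathrm{im}\,d_0$ is nondegenerate. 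Semi-strictness of $\lmin$ makes $\nesplit E|_\pt$ $\imfo{\symfo}$-isotropic (indeed $\anchord(\imfo{\symfo})$ vanishes at $\pt$ since it lies in $\homdi{}(\cring{A}{-1})$, which dies at a classical point), the Lagrangian condition makes it maximal, and by hypothesis $\refo{\symfo}$ is negative definite on $H:=\ker(d_1|_{\nesplit E|_\pt})=\homology{1}(\lmin|_\pt,\homdi{})$.

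The fibrewise construction is to extend $H$ to a maximal $\imfo{\symfo}$-isotropic subspace $\negsplit E1|_\pt\supseteq H$ on which $\refo{\symfo}$ is negative definite everywhere; equivalently, to deform the $\homdi{}$-exact complement $L$ of $H$ in $\nesplit E|_\pt$ inside the affine space of maximal isotropic extensions of $H$ that are transverse to $\ker d_1$ (the graphs over $\mathrm{im}\,d_1$), choosing the endpoint where $\refo{\symfo}$ becomes negative. Such an extension exists: on $H^{\perp_{\imfo{\symfo}}}/H$, again a complex space with split forms obeying the same $i$-relation, one picks a negative-definite Lagrangian complement. The crucial bookkeeping point is that $\negsplit E1|_\pt\cap\ker d_1=H$ holds \emph{automatically}: any $\refo{\symfo}$-negative subspace of $\ker d_1$ meets the $\symfo$-isotropic $\mathrm{im}\,d_0$ trivially, hence injects into $\homology{1}(\tanga{\dera A}|_\pt)$, and the Lagrangian rank condition identifies $H$ with a \emph{maximal} negative subspace there, so it cannot be enlarged inside $\ker d_1$. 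Thus $\lmin_1$ has the same fibre cohomology as $\lmin$ at every $\pt$, independently of how the rank of $d_1$ jumps. I would then realize $\negsplit E1$ as a smooth sub-bundle: the fibre of admissible choices is contractible (an affine, Siegel-type family, exactly as underlies the interpolation of Proposition \ref{StrictInterpolating}), so local sections exist and are patched by a partition of unity as in Proposition \ref{StrictGluing}. Let $\lmin_1$ be generated by $\negsplit E1$ together with $\tangde{\dera A}{2}$.

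That $\lmin_1$ is again semi-strict Lagrangian is then cheap: condition (iii) holds since $\negsplit E1\hookrightarrow\tosplit E$, conditions (i)–(ii) are as for $\lmin$, and fibrewise $\imfo{\symfo}$-isotropy gives semi-strictness (Remark \ref{StrictProperty}); the rank is unchanged and $\imfo{\symfo}$ pulls back to a coboundary, so the Theorem above forces any isotropic structure on $\lmin_1$ to be Lagrangian. The substantive work, and the main obstacle, is the equivalence $(\lmin_1,\lagras_1)\sim(\lmin,\lagras)$. Here I would use that $\nesplit E$ and $\negsplit E1$ share the same $H$ and that their complements $L,L_1$ both map $d_1$-isomorphically onto the one subspace $\mathrm{im}\,d_1\subseteq\ubun|_\pt$, so the acyclic two-term complexes $L\xrightarrow{\;\cong\;}\mathrm{im}\,d_1$ and $L_1\xrightarrow{\;\cong\;}\mathrm{im}\,d_1$ differ only by an isomorphism of the source over a common image. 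I would promote this into an equivalence by a mapping-cylinder distribution $\lmin_3$ built from these two acyclic pieces glued along $\mathrm{im}\,d_1$, with a contractible degree $1$ bridge on which the anchor vanishes, yielding weak equivalences $\lmin_3\to\lmin$ and $\lmin_3\to\lmin_1$, and then extend $\lagras$ across the contractible bridge to compatible $\lagras_3,\lagras_1$. The delicate points—keeping every member of the connecting family $\imfo{\symfo}$-isotropic and a derived foliation (so that it is a legitimate object of the equivalence relation), carrying the isotropic structure along without disturbing the mod-$\homdi{}(\cring{A}{-1})$ semi-strictness, and handling the globalization where the rank of $d_1$ is not locally constant—are exactly where the contractibility of the relevant Lagrangian Grassmannian and the partition-of-unity mechanism of Proposition \ref{StrictGluing} must be invoked; this equivalence step is the hardest part of the argument.
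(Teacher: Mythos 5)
Your fibrewise linear algebra is essentially sound --- indeed the ``maximality bookkeeping'' (a $\refo{\symfo}$-negative subspace of $\ker\homdi{}$ meets the isotropic boundaries trivially, hence injects into $\homology{1}(\tanga{\derm}|_\pt)$, where the image of $H$ is a maximal negative subspace, so no isotropic negative-definite extension of $H$ can acquire new cycles) is a genuinely useful observation. The gap lies in the two steps you defer, and the first of them is fatal as sketched. By Def.~\ref{DefEquivalenceDist}, an equivalence of distributions is a zigzag of weak equivalences of graded mixed algebras under $\drham{\bullet}{}{\dera{A}}$, i.e.\@ dually a zigzag of Lie--Rinehart morphisms \emph{over} $\tanga{\derm}$, commuting with the anchors. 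The identification of the exact complements $L\cong L_1$ that you propose to promote into a mapping cylinder is obtained by inverting $\homdi{}$ over the common image: an element $\ell\in L$ and its partner $\ell_1\in L_1$ have the same image under $\homdi{}$ but are \emph{different} vectors of $\tosplit{E}|_\pt$, so this identification does not commute with the anchors, and a cylinder built on it is not a morphism of integrable distributions at all. To repair it one must produce degree-$2$ homotopy data correcting the anchor --- exactly the map $\psi\colon\ubun\rightarrow i\nesplit{E_1}$ and the family $\phi-t\homdi{}(\psi)$ used in Prop.~\ref{EquivalentInterpolating} --- and nothing in your sketch supplies it. Moreover $\mathrm{im}\,\homdi{}$ and $H_\pt$ are only fibrewise defined: where the rank of $\homdi{}$ jumps they are not bundles, so neither the cylinder nor the pointwise constraint $\negsplit{E}{1}|_\pt\supseteq H_\pt$ assembles into sheaf-level data by contractibility-of-fibres arguments alone. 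Prop.~\ref{StrictInterpolating} does fix common subspaces, so \emph{gluing} local solutions is fine; what is never established is \emph{local existence}, near a jump point, of a smooth maximal isotropic negative-definite sub-bundle containing the jumping family $H_\pt$ at every nearby classical point.

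The paper's proof avoids both problems by never leaving the given distribution. One picks a basis $\{e_j\}$ of $\nesplit{E}$ orthogonal with respect to $\refo{\symfo}$; strictness of $\symfo$ together with the vanishing of $\imfo{\symfo}$ on $\nesplit{E}$ over classical points forces $\refo{\symfo}(e_j(\pt),e_j(\pt))\neq 0$, and negative definiteness on cohomology forces every basis vector with positive self-pairing to be a non-cycle at every classical point; then one simply replaces those $e_j$ by $i\,e_j$. Complex bilinearity of $\symfo$ flips the sign of the self-pairing, $\refo{\symfo}$-orthogonality of the basis keeps the new span $\imfo{\symfo}$-isotropic, and since only directions on which $\homdi{}$ is injective are rotated, the change of generators is an equivalence of distributions --- no cylinder, no partition of unity, and a single global basis. (Even here one should first split off an orthogonal basis of the cycles $\homology{1}(\lmin|_\pt,\homdi{})$, which is possible since $\refo{\symfo}$ is definite there, so that the rotation leaves the cycles untouched; your maximality argument is precisely the right tool to justify that no new cycles appear. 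So the economical correct proof is the paper's rotation by $i$ supplemented by your fibrewise bookkeeping, rather than the abstract extension-plus-cylinder route you outline.)
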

\begin{proof} Let $\{e_j\}$ be a basis for $\nesplit{E}$ that is orthogonal with respect to $\refo{\symfo}$. Since $\symfo$ is non-degenerate at each $\pt\colon\dera{A}\rightarrow\mathbb C$ and $\imfo{\symfo}=0$ on $\nesplit{E}|_\pt=0$, it follows that $\forall j$ $\refo{\symfo}(e_j(\pt),e_j(\pt))\neq 0$. Every section of $\nesplit{E}$ that is a cycle over $\pt$ has to be a non-trivial cycle, hence if $\refo{\symfo}(e_j(\pt),e_j(\pt))>0$, $e_j$ cannot be a cycle over $\pt$. Let $k\in\mathbb N$ be s.t.\@ $j<k\Rightarrow\refo{\symfo}(e_j(\pt),e_j(\pt))<0$, and $j\geq k\Rightarrow\refo{\symfo}(e_j(\pt),e_j(\pt))>0$. Then $\{e_j\}_{j<k}\cup\{i\,e_j\}_{j\geq k}$ define an equivalent integrable distribution $\lmin_1$, s.t.\@ $\refo{\symfo}$ is negative definite on $\nesplit{E_1}$ over each $\pt$.\end{proof}%

In Prop.\@ \ref{TrivLa} we have seen that every purely derived foliation on $\spec{\cring{A}{\bullet}}$ having an isotropic structure relative to a strict $-2$-shifted symplectic form $\symfo$ can be equivalently presented as a semi-strict derived foliation, i.e.\@ $\symfo$ vanishes modulo $\homdi{}(\cring{A}{-1})$. Now we show that in the particular case of $\imfo{\symfo}$ and the isotropic distribution being in fact Lagrangian, there is an equivalent presentation, s.t.\@ $\imfo{\symfo}$ vanishes everywhere.

\begin{proposition} Let $\dera{A}$ be a well presented dg $\mathbb C$-algebra and let $\symfo$ be a strict $-2$-shifted symplectic form in $\spec{\dera{A}}$. Given a semi-strict Lagrangian distribution
	\begin{equation*}(\lmin,\lagras)\longrightarrow(\tanga{\derm},\imfo{\symfo})\end{equation*}
s.t.\@ $\refo{\symfo}$ is negative definite on $\lmin|_\pt$ for each classical $\pt$, there is an equivalent Lagrangian distribution $(\lmin',\lagras')\longrightarrow(\tanga{\derm},\imfo{\symfo})$ that is strict.\end{proposition}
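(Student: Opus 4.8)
The plan is to reduce the statement to a purely linear-algebraic correction of the degree-one generating bundle, performed near the classical locus and then extended over all of $\mani$ by gluing. Write the tangent complex of $\derm$ with generating bundles $\tanga{\cring{A}{0}}$, $\tosplit{E}$, $\{\ubun^k\}_{k\geq 2}$ in degrees $0$, $1$ and $\geq 2$, and recall from semi-strictness (Def.\@ \ref{DefStrictIso}) that $\lmin$ is generated by a half-rank sub-bundle $\lhalf{E}\subseteq\tosplit{E}$ together with all of $\{\ubun^k\}_{k\geq 2}$, that $\refo{\symfo}$ is negative definite on $\lhalf{E}|_\pt$ at every classical $\pt$, and that $\imfo{\symfo}$ restricted to $\lhalf{E}$ takes values in $\homdi{}(\cring{A}{-1})$, i.e.\@ in the ideal cutting out the classical locus, so $\imfo{\symfo}|_{\lhalf{E}}$ vanishes at every classical point. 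The goal is to produce a sub-bundle $\negsplit{E}{01}\subseteq\tosplit{E}$, defined over all of $\mani$, maximally isotropic for $\imfo{\symfo}$ \emph{on the nose}, negative definite for $\refo{\symfo}$, and whose associated distribution is equivalent to $\lmin$.

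First I would correct $\lhalf{E}$ near the classical locus. Since $\imfo{\symfo}$ is non-degenerate on $\tosplit{E}$ and $\imfo{\symfo}|_{\lhalf{E}}$ vanishes on the classical locus, on a small open neighborhood $U$ of that locus one can choose a complement $\rhalf{E}$ on which $\imfo{\symfo}$ pairs $\lhalf{E}$ and $\rhalf{E}$ non-degenerately, and solve the (quadratic) isotropy equation for a section $S\colon\lhalf{E}\to\rhalf{E}$ so that $\negsplit{E}{0}:=\{e+Se\}$ is exactly $\imfo{\symfo}$-isotropic. By construction the entries of $S$ lie in $\homdi{}(\cring{A}{-1})$, hence are small near the classical locus, so after shrinking $U$ the bundle $\negsplit{E}{0}$ remains negative definite for $\refo{\symfo}$. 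Because $S$ is $\homdi{}$-exact, the generators of the two distributions differ inside $\tanga{\derm}$ by $\homdi{}$-boundaries; this produces an automorphism of $\tanga{\derm}$ chain-homotopic to the identity carrying one distribution to the other, i.e.\@ a weak equivalence of distributions over $U$.

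Next I would globalize. The maximally $\imfo{\symfo}$-isotropic, $\refo{\symfo}$-negative-definite subspaces of $\tosplit{E}$ form a fibre bundle over $\mani$ with contractible fibres, so it admits a global section $\negsplit{E}{1}$. Taking $V$ a closed neighborhood of the classical locus contained in $U$ and applying Prop.\@ \ref{StrictGluing} (whose interpolation mechanism is Prop.\@ \ref{StrictInterpolating}) to $\negsplit{E}{0}$ on $U$ and $\negsplit{E}{1}$ on $\mani$, I obtain a global maximally isotropic negative-definite $\negsplit{E}{01}$ with $\negsplit{E}{01}|_V=\negsplit{E}{0}|_V$. Let $\lmin'$ be the strict distribution generated by $\negsplit{E}{01}\oplus\big(\underset{k\geq 2}\bigoplus\ubun^k\big)$ and take $\lagras'=0$, admissible since $\imfo{\symfo}|_{\negsplit{E}{01}}=0$ (Example \ref{ExIso}); by Prop.\@ \ref{Lagrangian} it is Lagrangian.

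Finally I would assemble the equivalence $(\lmin,\lagras)\sim(\lmin',\lagras')$ locally on $\mani$. Over $V$ this is the weak equivalence of the second step, since $\negsplit{E}{01}|_V=\negsplit{E}{0}|_V$. Away from the classical locus the localization of $\cring{A}{\bullet}$ is acyclic, so $\tanga{\derm}$ is contractible there and any two distributions are trivially equivalent; patching these equivalences over the locally finite cover (using that being an equivalence of distributions in the sense of Def.\@ \ref{DefEquivalenceDist} is local on $\mani$) yields a global equivalence, along which $\lagras$ transports to a structure equivalent to $\lagras'=0$. I expect the main obstacle to be the second step: justifying rigorously that the exact correction $S$ yields a genuine \emph{equivalence} of distributions, not merely pointwise agreement on the classical locus. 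This requires exhibiting the chain-homotopy realizing the passage $\lhalf{E}\rightsquigarrow\negsplit{E}{0}$ as an honest automorphism of $\tanga{\derm}$ up to homotopy and checking it intertwines the isotropic structures; the globalization and the Lagrangian conclusion are then comparatively formal.
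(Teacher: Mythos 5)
Your proposal rests on the same mechanism as the paper's proof: semi-strictness lets you write the degree-one generators of $\lmin$ as a strictly $\imfo{\symfo}$-isotropic frame plus corrections whose coefficients lie in $\homdi{}(\cring{A}{-1})$, and the correction is then to be absorbed by a chain homotopy of the anchor. The problem is that the step you yourself flag as ``the main obstacle'' is the entire content of the paper's proof, and the one-line justification you offer for it is false as stated: a section of the form $\homdi{}(\mu)f$, with $\mu\in\cring{A}{-1}$ and $f$ a section of $\tosplit{E}$, is \emph{not} a $\homdi{}$-coboundary in $\tanga{\derm}$. Its natural primitive is $\mu f$, and the Leibniz rule gives $\homdi{}(\mu f)=\homdi{}(\mu)f-\mu\,\homdi{}(f)$, so the discrepancy $\mu\,\homdi{}(f)$ survives. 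What the paper actually does is choose a frame $\{e_j\}$ of $\lmin$ modulo $\homdi{}(\cring{A}{-1})$ orthonormal for $-\refo{\symfo}$ (hence strictly isotropic for $\imfo{\symfo}$), expand $\wt{e}_j=e_j+\sum_k\big(\homdi{}(\mu^k)e_k+\homdi{}(\nu^k)\,i e_k\big)$, define the homotopy $\phi\colon\wt{e}_j\mapsto\sum_k(\mu^k e_k+\nu^k i e_k)$, and replace $\anchor$ by $\anchor-\homdi{}(\phi)$. The corrected generators are then $e_j$ \emph{plus} leftover terms in $\cring{A}{-1}\cdot\homdi{}(\tosplit{E})$, and two verifications close the argument: these leftovers lie in the dg submodule generated by the degree-$\geq 2$ generators, all of which $\lmin$ already contains, so the generated distribution is unchanged; and they cannot spoil strict isotropy because a strict $-2$-shifted $\symfo$ pairs $\tosplit{E}$ only with itself and pairs $\ubun$ only with the degree-$0$ generators, so all cross terms vanish. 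Without these two checks you have not produced the claimed ``automorphism of $\tanga{\derm}$ chain-homotopic to the identity,'' i.e.\ the equivalence of distributions; this is a genuine gap, not a routine verification.

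The second gap is your final assembly. An equivalence in the sense of Def.\@ \ref{DefEquivalenceDist} is a global zigzag through a third distribution; equivalences given separately on members of an open cover of $\mani$ do not glue without compatibility data on overlaps, and no result available at this point of the paper asserts that ``being equivalent'' is local on $\mani$ (statements of that kind are exactly what the later sheaf-theoretic theorems provide, and those theorems rely on this proposition, so invoking them here would be circular). The paper's proof is organized precisely to avoid any patching: the frame $\{e_j,ie_j\}$, extended by Gram--Schmidt to all of $\mani$, the coefficients $\mu^k,\nu^k$, and the homotopy $\phi$ are all chosen globally at once, so a single corrected anchor realizes the equivalence. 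Your detour through a graph correction on $U$, an auxiliary global $\negsplit{E}{1}$, and Prop.\@ \ref{StrictGluing} creates the patching problem needlessly. It can be repaired along the paper's lines: since every function vanishing near the classical locus lies in the closed ideal $\homdi{}(\cring{A}{-1})$, and since any two maximal $\refo{\symfo}$-negative sub-bundles of $\tosplit{E}$ are isomorphic via projection, the glued bundle $\negsplit{E}{01}$ can be presented globally as the image of a bundle map $\nesplit{E}\rightarrow\tosplit{E}$ congruent to the inclusion modulo $\homdi{}(\cring{A}{-1})$; one then runs the homotopy argument once, globally, with no patching of equivalences at all.
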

\begin{proof} Since $\lmin$ is semi-strict relative to $\imfo{\symfo}$ we can choose a basis $\{e_j\}_{1\leq j\leq n}$ of $\lmin$ modulo $\homdi{}(\cring{A}{-1})$ that is orthonormal with respect to $-\refo{\symfo}$. Using Gram--Schmidt process, if needed, we can extend $\{e_j,ie_j\}_{1\leq j\leq n}$ to a basis of $\tosplit{E}$ over all of $\mani$. Let $\{\wt{e}_j\}_{1\leq j\leq n}$ be a basis of $\lmin$ over $\cring{A}{0}$. We have
	\begin{equation*}\forall j\quad\wt{e}_j=e_j+\underset{1\leq k\leq n}\sum(\homdi{}(\mu^k)e_k+\homdi{}(\nu^k) i e_k),\end{equation*}
where $\{\mu^k,\nu^k\}_{1\leq k\leq n}\subseteq\cring{A}{-1}$. Let $\phi\colon\nesplit{E}\rightarrow\tangde{\cring{A}\bullet}{0}$ be defined by 
	\begin{equation*}\forall j\quad \wt{e}_j\longmapsto\underset{1\leq k\leq n}\sum(\mu^k e_k+\nu^k i e_k).\end{equation*}
Then $\anchor-\homdi{}(\phi)$ is an equivalent integrable distribution that is strictly Lagrangian with respect to $\imfo{\symfo}$.\end{proof}%

If we try to glue strictly negative distributions defined on the same chart, we might encounter situations when two distributions are defined over the same open subset and they are equivalent. The natural question arises whether the process of interpolating between two \emph{equivalent} Lagrangian distributions take us out of this equivalence class. The following result shows that the answer is no.

\begin{proposition}\label{EquivalentInterpolating} Let $\nesplit{E_0},\nesplit{E_1}\subseteq\tosplit{E}$ be two maximally isotropic sub-bundles with respect to $\imfo{\symfo}$, s.t.\@ $\refo{\symfo}$ is negative definite on $\nesplit{E_0}$, $\nesplit{E_1}$, and suppose that the two distributions are equivalent. Then the family obtained by interpolating between these two (Prop.\@ \ref{StrictInterpolating}) lies within the same equivalence class of integrable distributions.\end{proposition}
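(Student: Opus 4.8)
The plan is to exhibit every member of the interpolating family as the distribution $\negtat{\tanga{\cring{A}{\bullet}}}{0}$ deformed by an \emph{exact} term, and then to invoke the gauge move already used in the proof of the preceding proposition, namely that replacing an anchor $\anchor$ by $\anchor-\homdi{}(\phi)$, for a degree $-1$ map $\phi$ into $\tangde{\cring{A}{\bullet}}{0}$, produces an equivalent integrable distribution.

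First I would put the whole family on a single module. By Prop.~\ref{StrictInterpolating} the maps $\pi_t=\iota-t(\iota-\pi)\colon\nesplit{E_0}\to\tosplit{E}$ are injective with $\imfo{\symfo}$-isotropic image for every $t\in[0,1]$, so $\negtat{\tanga{\cring{A}{\bullet}}}{t}$ is the sub-Lie--Rinehart algebra of $\tanga{\cring{A}{\bullet}}$ generated by $\pi_t(\nesplit{E_0})\oplus\tangde{\cring{A}{\bullet}}{2}$. Hence all members share the fixed underlying graded module $\nesplit{E_0}\oplus\tangde{\cring{A}{\bullet}}{\geq 2}$, carrying the family of anchors $\anchor_t=\iota-t(\iota-\pi)$, which is affine in $t$ with $\anchor_0=\iota$ the anchor of the $\nesplit{E_0}$-distribution.

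Next I would read the equivalence hypothesis as exactness of the deformation term. The projection $\pi$ restricts to an isomorphism $\nesplit{E_0}\overset{\cong}\longrightarrow\nesplit{E_1}$, so after this identification the two distributions have anchors $\iota$ and $\pi$ into $\tanga{\cring{A}{\bullet}}$. Equivalence of integrable distributions (Def.~\ref{DefEquivalenceDist}) then forces these two anchors to be homotopic, i.e.\ there is a degree $-1$ map $\phi\colon\nesplit{E_0}\to\tangde{\cring{A}{\bullet}}{0}$ with $\iota-\pi=\homdi{}(\phi)$; this is precisely the phenomenon $\wt{e}_j=e_j+\homdi{}(\cdots)$ exploited in the preceding proposition. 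Consequently $\anchor_t=\iota-t\,\homdi{}(\phi)=\anchor_0-\homdi{}(t\phi)$ for every $t$, so each $\negtat{\tanga{\cring{A}{\bullet}}}{t}$ is obtained from $\negtat{\tanga{\cring{A}{\bullet}}}{0}$ by adding the exact term $\homdi{}(t\phi)$ to the anchor. By the gauge move this keeps us inside a single equivalence class, and since $t\phi$ is defined wherever $\phi$ is, the conclusion is global over $\unspam$.

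The main obstacle is the middle step: extracting from the abstract equivalence of Def.~\ref{DefEquivalenceDist} a concrete exact homotopy $\phi$ whose differential is exactly $\iota-\pi$ for the specific projection $\pi$ used in Prop.~\ref{StrictInterpolating}, rather than for some other identification $\nesplit{E_0}\cong\nesplit{E_1}$ arising from the comparison distribution. I expect to handle this by observing that the two distributions agree in degrees $\geq 2$ and are negative definite in degree $1$, so any two such identifications differ by an automorphism that is again realized through $\homdi{}(\tangde{\cring{A}{\bullet}}{0})$, whence $\iota-\pi$ is exact for the required $\pi$. Injectivity and $\imfo{\symfo}$-isotropy of $\anchor_t$ being already guaranteed by Prop.~\ref{StrictInterpolating}, each $\anchor_0-\homdi{}(t\phi)$ is then a bona fide strictly negative distribution equivalent to $\negtat{\tanga{\cring{A}{\bullet}}}{0}$, which is the claim.
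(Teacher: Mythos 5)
You follow the paper's route in spirit: realize the interpolating family $\pi_t=\iota-t(\iota-\pi)$ as a straight-line deformation of the anchor of the $\nesplit{E_0}$-distribution by an exact term extracted from the equivalence, so that the gauge move $\anchor\mapsto\anchor-\homdi{}(\phi)$ keeps the whole family in one class. The genuine gap is your middle step, and your proposed repair does not close it. Definition~\ref{DefEquivalenceDist} does not produce a degree $-1$ map $\phi\colon\nesplit{E_0}\rightarrow\tangde{\cring{A}{\bullet}}{0}$ with $\iota-\pi=\homdi{}(\phi)$ on the nose, and in general no such $\phi$ exists. First, if this identity held, then the map acting as $\pi$ on $\nesplit{E_0}$ and as the identity in degrees $\geq 2$ would be a chain map from $\lmin_0$ (the dg submodule generated by $\nesplit{E_0}$ and $\tangde{\cring{A}{\bullet}}{2}$) into $\tanga{\cring{A}{\bullet}}$, being a difference of chain maps; this forces $\homdi{}\circ(\iota-\pi)=(\iota-\pi)\circ\homdi{}$, i.e.\ a compatibility of the splitting $\tosplit{E}=\nesplit{E_1}\oplus i\,\nesplit{E_1}$ with the differential that nothing in the hypotheses provides. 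For the same reason your presentation of the family as a \emph{fixed} dg module with varying anchor is circular: transported to the fixed graded module, the differential of $\negtat{\tanga{\cring{A}{\bullet}}}{t}$ is $t$-dependent, and it becomes $t$-independent exactly when the identity you are trying to prove holds. Second, what the equivalence actually yields after straightening the zig-zag is a quasi-isomorphism $\theta\colon\lmin_0\rightarrow\lmin_1$ over $\tanga{\cring{A}{\bullet}}$ together with a homotopy $h$ with $\iota-\anchor_1\circ\theta=\homdi{}(h)$, where $\anchor_1$ is the inclusion of $\lmin_1$ and $\homdi{}(h)$ is the graded commutator as in your gauge move. To pass from $\theta$ to the \emph{specific} $\pi$ you must compose with the projection $p$ of $\tosplit{E}$ onto $i\,\nesplit{E_1}$ along $\nesplit{E_1}$, which kills the image of $\theta$ and gives only $\iota-\pi=p\circ\homdi{}(h)$; since $i\,\nesplit{E_1}$ is not preserved by $\homdi{}$, the projection $p$ does not commute with $\homdi{}$, and $p\circ\homdi{}(h)$ is not itself of the form $\homdi{}(\phi)$. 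Your automorphism argument only concerns which identification $\nesplit{E_0}\cong\nesplit{E_1}$ is used; the projection problem persists for every identification.

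The paper's proof is engineered precisely to avoid this. Instead of seeking a homotopy inside $\tanga{\cring{A}{\bullet}}$, it maps $\lmin_0$ by a \emph{strict} weak equivalence $\phi$ over $\tanga{\cring{A}{\bullet}}$ into the explicit fibrant replacement $C^\bullet$ of $\lmin_1$ constructed in the proof of Prop.~\ref{TrivLa}, built from the very splitting $\tosplit{E}=\nesplit{E_1}\oplus i\,\nesplit{E_1}$ that defines $\pi$. Such a strict $\phi$ exists because the anchor of $C^\bullet$ is surjective, so no homotopy relation has to be imposed by hand; the component $\psi\colon\ubun\rightarrow i\,\nesplit{E_1}$ of $\phi$ landing in the shifted copy of $\tosplit{E}/\nesplit{E_1}$ \emph{is} the homotopy you were after, and the chain-map equation for $\phi$ inside $C^\bullet$ automatically identifies $\homdi{}(\psi)$ with the deformation $\iota-\pi$ for this specific $\pi$, all correction terms being housed in the acyclic part of $C^\bullet$. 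The family $\phi-t\homdi{}(\psi)$ then consists of weak equivalences and reproduces the interpolation of Prop.~\ref{StrictInterpolating}, which is the claim. So your outline is salvageable, but only by replacing the asserted identity $\iota-\pi=\homdi{}(\phi)$ with this passage through the explicit fibrant replacement; as written, that identity is the whole content of the proposition and is false in general.
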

\begin{proof} Since $\nesplit{E_0}$ and $\nesplit{E_1}$ are equivalent integrable distributions there is a weak equivalence $\phi$ over $\tanga{\cring{A}\bullet}$ from the first to a fibrant replacement of the second. As a fibrant replacement of $\nesplit{E_1}$ we can take an explicit construction we used in the proof of Prop.\@ \ref{TrivLa}. Then $\phi$ defines a map $\psi\colon\ubun\rightarrow i\nesplit{E_1}\cong\tosplit{E}/\nesplit{E_1}$. Consider a family of weak equivalences of integrable distributions: $\phi-t\homdi{}(\psi)$. This is exactly the family one obtains by interpolating as in Prop.\@ \ref{StrictInterpolating}.\end{proof}%

The last proposition has a very important corollary.

\begin{theorem} Let $(\dersch,\symfo)$ be a derived scheme with a strict $\mathbb C$-valued $-2$-shifted symplectic structure. Let $(\derm,\imfo{\symfo},\refo{\symfo})$ be the underlying dg manifold with $\mathbb R$-valued symplectic structures. The sheaf on $\mani$ of purely derived foliations that are Lagrangian distributions with respect to $\imfo{\symfo}$ and are negative definite with respect to $\refo{\symfo}$ is isomorphic to the sheaf of purely derived foliations that are strict Lagrangian distributions with respect to $\imfo{\symfo}$ and are strictly negative with respect to $\refo{\symfo}$.\end{theorem}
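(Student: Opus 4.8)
The plan is to exhibit the evident forgetful morphism from the second sheaf to the first and to show it is an isomorphism by checking bijectivity on stalks. Write $\mathcal F_{2}$ for the sheaf of purely derived foliations that are strict Lagrangian with respect to $\imfo{\symfo}$ and strictly negative with respect to $\refo{\symfo}$, and $\mathcal F_{1}$ for the sheaf of purely derived foliations that are Lagrangian with respect to $\imfo{\symfo}$ and negative definite with respect to $\refo{\symfo}$. A strict Lagrangian distribution of half rank is Lagrangian in the homotopical sense (Example~\ref{ExIso}, Prop.~\ref{Lagrangian}), and if $\refo{\symfo}$ is negative definite on all of $\nesplit{E}$ then it is in particular negative definite on $H^{1}(\lmin|_{\pt})\subseteq\nesplit{E}|_{\pt}$, so strict negativity implies negative definiteness in the sense of Def.~\ref{DefNegDist}. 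Hence every local section of $\mathcal F_{2}$ is a local section of $\mathcal F_{1}$, and since $\mathcal F_{2}$-equivalence refines $\mathcal F_{1}$-equivalence, the forgetful assignment $\mathcal F_{2}\to\mathcal F_{1}$ is a well-defined morphism of sheaves. As both are sheaves, it suffices to prove this morphism is bijective on every stalk; fixing $\pt\in\mani$ and shrinking, I may pass to a single well presented chart on which $\symfo$ is strict and minimal at $\pt$.

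For surjectivity on stalks I would feed a germ of $\mathcal F_{1}$ through the strictification chain already assembled in Sections~\ref{SectionStrict} and~\ref{SectionGlobalLagrangian}. Starting from a purely derived foliation with a Lagrangian, negative definite isotropic structure, Prop.~\ref{TrivLa} replaces it by an equivalent \emph{semi-strict} representative; the proposition upgrading cohomological negative definiteness to fibrewise negative definiteness yields an equivalent distribution on which $\refo{\symfo}$ is negative definite on $\nesplit{E}|_{\pt}$; and the proposition strictifying a semi-strict Lagrangian distribution relative to $\imfo{\symfo}$ then produces an equivalent strict, strictly-negative Lagrangian distribution. This output is a germ of $\mathcal F_{2}$ whose image under the forgetful map is, by construction, the germ we started with.

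Injectivity on stalks is the heart of the argument, and it is exactly the corollary of Prop.~\ref{EquivalentInterpolating}. Suppose two germs of $\mathcal F_{2}$, represented by strict strictly-negative distributions $\nesplit{E_{0}},\nesplit{E_{1}}\subseteq\tosplit{E}$, become equal in $\mathcal F_{1}$, i.e.\@ their underlying purely derived foliations are equivalent; here the isotropic data may be disregarded, since for such a foliation $\anchord(\imfo{\symfo})=0$ is a coboundary and the rank is half, so by the theorem concluding Section~\ref{SectionStrict} being Lagrangian is a property of the foliation itself. By Prop.~\ref{StrictInterpolating} there is a smooth family $\nesplit{E_{t}}$, $t\in[0,1]$, of maximally isotropic sub-bundles interpolating between them with $\refo{\symfo}|_{\nesplit{E_{t}}}$ negative definite, so each member is again strict and strictly negative. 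Prop.~\ref{EquivalentInterpolating} then guarantees that this entire family remains inside the single equivalence class of integrable distributions determined by $\nesplit{E_{0}}\simeq\nesplit{E_{1}}$, and so realizes an equivalence of strict strictly-negative Lagrangian distributions between $\nesplit{E_{0}}$ and $\nesplit{E_{1}}$. Thus the two germs already coincide in $\mathcal F_{2}$.

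Combining the two previous paragraphs, the forgetful morphism is bijective on every stalk and hence an isomorphism of sheaves. I expect the main obstacle to be precisely the injectivity step: one must ensure that an equivalence between two strict strictly-negative distributions, a priori only witnessed in $\mathcal F_{1}$, can be carried out \emph{through} strict strictly-negative distributions, which is what Prop.~\ref{StrictInterpolating} together with Prop.~\ref{EquivalentInterpolating} supply, and that the interpolation is compatible with the accompanying isotropic structures. The surjectivity, by contrast, is a mechanical concatenation of the strictification propositions.
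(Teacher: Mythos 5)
Your proof is correct in substance, but it is organized quite differently from the paper's. The paper proves the theorem ``from the $\mathcal F_1$ side'': a section of the non-strict sheaf is a compatible system of distributions on an atlas; using local cofibrancy of dg $\cinfty$-rings this system is rewritten as local distributions on $\derm$ itself, each is strictified by the chain of Propositions \ref{StriPro}, \ref{TrivLa} and the two propositions of Section \ref{SectionGlobalLagrangian}, and then the local strict pieces are merged into \emph{one} strict distribution by interpolating over a good coordinate system in the sense of \cite{BoJ13}, with Prop.\@ \ref{EquivalentInterpolating} guaranteeing that the glued object stays in the original equivalence class. You instead run the forgetful morphism $\mathcal F_2\to\mathcal F_1$ and check bijectivity on stalks: your surjectivity step is the same local strictification chain, but you never glue anything --- the good coordinate system and Prop.\@ \ref{StrictGluing} disappear entirely, their role being absorbed into the sheaf axioms that make the stalk criterion valid, and Prop.\@ \ref{EquivalentInterpolating} is repurposed from ``the glued object represents the same section'' to ``injectivity on stalks.'' Both routes prove the stated isomorphism; what the paper's route buys is an explicit recipe producing a single global strict distribution from a non-strict section, which is in the constructive spirit of the final softness theorem, while your route is formally cleaner and more local. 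Two caveats you should make explicit: first, the stalk criterion requires knowing that $\mathcal F_1$ and, in particular, $\mathcal F_2$ with its chosen equivalence relation are genuine sheaves (the paper only establishes this carefully for the ambient sheaves of isotropic structures); second, your injectivity step is only ``the heart of the argument'' under the finer reading in which $\mathcal F_2$-equivalences must be witnessed through strict strictly negative objects --- if $\mathcal F_2$ simply carries the restricted $\mathcal F_1$-equivalence, injectivity is tautological, and your interpolation argument, while correct, is doing no work. Since the paper never pins down this relation, handling the finer reading as you do is the safe choice, but the ambiguity deserves a sentence.
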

\begin{proof} A section of the sheaf of non-strict distributions is given by a compatible system of distributions on an atlas. Since locally every dg $\cinfty$-ring is cofibrant, such an atlas can be equivalently written as a compatible system of local distributions on $\derm$ itself. Then using a good coordinate system (e.g.\@ \cite{BoJ13}) we can apply Prop.\@ \ref{EquivalentInterpolating} to obtain one distribution on $\derm$.\end{proof}%

Now we have our main theorem as an immediate corollary.

\begin{theorem}\label{MainResult} Let $(\dersch,\symstra)$ be a derived scheme with a $\mathbb C$-valued $-2$-shifted symplectic structure. Let $(\derm,\imfo{\symstra},\refo{\symstra})$ be the underlying dg manifold with $\mathbb R$-valued symplectic structures. Suppose that the manifold $\mani$ of classical points is Hausdorff and second countable. Then the sheaf on $\mani$ of purely derived foliations that are Lagrangian distributions with respect to $\imfo{\symstra}$ and are negative definite with respect to $\refo{\symstra}$ is soft. In particular the set of global sections is not empty.\end{theorem}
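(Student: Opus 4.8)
The plan is to derive softness from the interpolation and gluing results of Propositions~\ref{StrictInterpolating} and~\ref{StrictGluing}, together with the fact that a Hausdorff second countable manifold is paracompact and carries $\cinfty$ partitions of unity. Write $\mathfrak{L}^{-}$ for the sheaf in the statement. The gluing propositions are phrased for a \emph{strict} form, so on each chart I must first strictify: by the local Darboux theorem (Prop.~\ref{LocalD}) every classical point lies in a chart $W$ on which $\symstra$ is equivalent to a strict form $\symfo$, and over $W$ the preceding theorem gives an isomorphism between $\mathfrak{L}^{-}|_{W}$ and the sheaf of strictly negative strict Lagrangian distributions of Definition~\ref{DefStriNe} for $\imfo{\symfo}$, i.e.\@ the maximally $\imfo{\symfo}$-isotropic sub-bundles $\nesplit{E}\subseteq\tosplit{E}|_{W}$ on which $\refo{\symfo}$ is negative definite. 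This is the form in which Propositions~\ref{StrictInterpolating} and~\ref{StrictGluing} apply. It also gives local non-emptiness of $\mathfrak{L}^{-}$: over such a chart $\tosplit{E}$ carries the pairings $\refo{\symfo}$, $\imfo{\symfo}$ of Example~\ref{StriFoEx}, and the maximally $\imfo{\symfo}$-isotropic sub-bundles negative definite for $\refo{\symfo}$ are the sections of a bundle with non-empty (indeed contractible) fibres, so after shrinking the chart such a sub-bundle exists.

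For softness, let $C\subseteq\mani$ be closed and let $s\in\mathfrak{L}^{-}(N)$ be a section over an open neighbourhood $N\supseteq C$; I must produce a global section agreeing with $s$ near $C$. Using local non-emptiness and paracompactness, choose a locally finite cover $\{W_{i}\}_{i\geq 1}$ of $\mani$ by strictifiable charts carrying sections $\sigma_{i}\in\mathfrak{L}^{-}(W_{i})$, together with a shrinking $\{V_{i}\}$ with $\overline{V_{i}}\subseteq W_{i}$ closed in $\mani$ and $\bigcup_{i}V_{i}=\mani$. I extend $s$ across the charts one at a time. Assume a section $t_{i-1}$ has been built on an open set $\Omega_{i-1}\supseteq C\cup\overline{V_{1}}\cup\dots\cup\overline{V_{i-1}}$ agreeing with $s$ near $C$, with $t_{0}=s$ and $\Omega_{0}=N$. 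Working inside $W_{i}$, where both $t_{i-1}|_{\Omega_{i-1}\cap W_{i}}$ and $\sigma_{i}$ are identified with strict negative Lagrangians for $\imfo{\symfo}$, I apply the argument of Prop.~\ref{StrictGluing} on the open submanifold $\Omega_{i-1}\cup W_{i}$: with a cut-off function equal to $1$ near $\overline{V_{i}}\setminus\Omega_{i-1}$ and to $0$ near the complement of $W_{i}$, the interpolation of Prop.~\ref{StrictInterpolating} along this function yields $t_{i}\in\mathfrak{L}^{-}(\Omega_{i})$, $\Omega_{i}:=\Omega_{i-1}\cup W_{i}$, which equals $t_{i-1}$ off $W_{i}$ and equals $\sigma_{i}$ on $\overline{V_{i}}$, and in particular still agrees with $s$ near $C$. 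Since the cover is locally finite, each point of $\mani$ has a neighbourhood meeting only finitely many $W_{i}$, so $\{t_{i}\}$ is eventually constant there and $t:=\lim_{i}t_{i}$ is a well-defined global section with $t|_{C}=s|_{C}$. Hence $\mathfrak{L}^{-}(\mani)\to\mathfrak{L}^{-}(C)$ is surjective, i.e.\@ $\mathfrak{L}^{-}$ is soft.

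The ``in particular'' assertion is the case $C=\emptyset$: then $\mathfrak{L}^{-}(\emptyset)$ is a one-point set, softness forces $\mathfrak{L}^{-}(\mani)\to\mathfrak{L}^{-}(\emptyset)$ to be surjective, and the inductive gluing of the $\sigma_{i}$ above exhibits an actual global section, so $\mathfrak{L}^{-}(\mani)\neq\emptyset$. The one genuine obstacle is the bookkeeping of the inductive gluing: one must organise the charts, their shrinkings, and the cut-off functions so that every step is a legitimate instance of Prop.~\ref{StrictGluing} — in particular that $\sigma_{i}$ is installed only on $\overline{V_{i}}$, which is closed in $\mani$, and that the strictification and the preceding theorem's isomorphism are used coherently on overlaps — and so that local finiteness makes $\{t_{i}\}$ locally eventually constant. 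This is exactly the ``good coordinate system'' organisation of \cite{BoJ13}; moreover Prop.~\ref{EquivalentInterpolating} guarantees that interpolating between equivalent distributions stays within the same equivalence class, so no compatibility is lost when different charts are glued.
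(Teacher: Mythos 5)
Your proposal is correct and follows essentially the same route as the paper: the paper's own proof likewise strictifies locally via Prop.~\ref{LocalD}, uses the preceding theorem to identify the sheaf with the sheaf of strict, strictly negative Lagrangian distributions, and then appeals to the interpolation/gluing results (Props.~\ref{StrictInterpolating}, \ref{StrictGluing}, \ref{EquivalentInterpolating}), compressing your explicit induction over a locally finite cover into the standard fact that on a Hausdorff, second countable space local softness implies softness. The only point to tighten is that your cut-off function must also be required to vanish on a neighbourhood of $C$ (possible since $\overline{V_i}\setminus\Omega_{i-1}$ and $C\cup(\mani\setminus W_i)$ are disjoint closed sets in a normal space) --- otherwise the interpolation could disturb $t_{i-1}$ near $C\cap W_i$ --- and that $t_i$ equals $\sigma_i$ only near $\overline{V_i}\setminus\Omega_{i-1}$ rather than on all of $\overline{V_i}$, neither of which affects the induction; this is exactly the bookkeeping you flag.
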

\begin{proof} The fact that $\mani$ is Hausdorff and second countable implies that it is enough to show local softness of the sheaf. Therefore we can assume that $\symstra$ is strict. But then the sheaf of Lagrangian distributions that we want becomes isomorphic to the sheaf of strict Lagrangian distributions that are strictly negative definite. We know already that this sheaf is soft.\end{proof}%

\noindent{\small{\tt{dennis.borisov@uwindsor.ca, lkatzarkov@gmail.com, artan.sheshmani@gmail.com, yau@math.harvard.edu}}

\end{document}

